%% file: main-ssrn.tex
\documentclass{article}
\usepackage{iclr2027_conference,times}
\iclrfinalcopy
\usepackage[english]{babel}
\usepackage{mathtools,amssymb,amsthm}
\usepackage{graphicx,booktabs,tikz}
\usepackage{placeins}
\usetikzlibrary{arrows.meta,positioning,calc}
\usepackage{hyperref}
\usepackage{url}
\hypersetup{hidelinks,pdflang={en-US},
  pdftitle={Tightness and Error Exponents of SDP with Logarithmically Many Communities},
  pdfauthor={Shengtao Guo, Ethan X. Fang, Junwei Lu}}

\title{Tightness and Error Exponents of SDP with Logarithmically Many Communities}
\author{Shengtao Guo
  \qquad
  Ethan X. Fang\thanks{Department of Biostatistics \& Bioinformatics,
  Duke University, Durham, NC 27710, USA. Email: \texttt{ethan.fang@duke.edu}.}
  \qquad
  Junwei Lu\thanks{Department of Biostatistics, Harvard T.H. Chan School of
  Public Health, Boston, MA 02115, USA. Email: \texttt{junweilu@hsph.harvard.edu}.}}
\renewcommand{\headrulewidth}{0pt}
\makeatletter
\def\@maketitle{\vbox{\hsize\textwidth\centering
  {\LARGE\sc \@title\par}
  \begin{tabular}[t]{c}\bf\rule{\z@}{24pt}\@author\end{tabular}\par
  \vskip 0.3in minus 0.1in}}
\makeatother

\newtheorem{theorem}{Theorem}[section]
\newtheorem{proposition}[theorem]{Proposition}
\newtheorem{lemma}[theorem]{Lemma}
\newtheorem{corollary}[theorem]{Corollary}
\theoremstyle{remark}

\numberwithin{equation}{section}
\newcommand{\E}{\mathbb E}
\newcommand{\Prob}{\mathbb P}
\newcommand{\one}{\mathbf 1}
\newcommand{\cH}{\mathcal H}
\newcommand{\cS}{\mathcal S}
\newcommand{\diag}{\operatorname{diag}}
\newcommand{\Tr}{\operatorname{Tr}}
\newcommand{\range}{\operatorname{range}}

\begin{document}
\maketitle
\lhead{}
\input{main-body}

\subsection*{AI use statement}
The Odin Automatic AI Research Agent was used to aid or polish the
writing and to assist with proving mathematical claims. The manuscript has undergone AI-assisted proof
and consistency checks. The authors reviewed the AI-assisted work and take
responsibility for the final content.

\bibliographystyle{iclr2027_conference}
\bibliography{references}
\clearpage
\appendix
\input{appendix}

\end{document}

%% file: main-body.tex
\begin{abstract}
We study a semidefinite programming (SDP) relaxation for community
recovery when the number of communities grows logarithmically.
In the balanced stochastic block model with \(n=km\) vertices, we consider the regime
\(k/\log m\to\gamma>0\), with edge probabilities
\(\alpha\log m/m\) within communities and \(\beta\log m/m\) across them,
for fixed \(\alpha>\beta>0\).
We derive the sharp asymptotic tightness boundary away from critical
cases. When rare vertices cause tightness to fail while the bulk remains
spectrally stable, the normalized matrix error of every near-optimal solution still
vanishes. We prove matching high-probability exponents for this error and
the normalized optimal objective gain, governed by the same local correction that
determines tightness. Throughout this spectrally stable region, a single SDP solve followed
by explicit rounding and refinement achieves exact community recovery
above the information-theoretic threshold.
This guarantee holds even when the planted community matrix is not
an optimal solution to the SDP.
\end{abstract}

\section{Introduction}

Semidefinite relaxations provide a principled approach to community
recovery by optimizing over matrices that represent pairwise community
membership. When a relaxation is tight, its unique optimal solution recovers the
planted community matrix, and the labels follow immediately. Once
tightness fails, however, the statistical consequences are less clear.
How far does the optimizer move from the planted matrix, and can its
remaining structure still support exact recovery? We study these
questions for the balanced cluster-matrix semidefinite program (SDP)
in the stochastic block model (SBM), a canonical model for graph
clustering \citep{Abbe18}.

The regime of logarithmically many communities brings these questions
into focus. For fixed or sublogarithmically many balanced communities,
this SDP attains the information-theoretic exact-recovery threshold
\citep{HajekWuXuExtensions16,AgarwalBandeiraKoiliarisKolla15}. However,
at the logarithmic scale, local degree fluctuations and collective
spectral noise become comparable, and the sharp behavior of the
relaxation is unresolved by those guarantees. Specifically, we consider
\(n=km\) vertices partitioned into \(k\) communities of size \(m\).
Let \(L=\log m\), and assume \(k/L\to\gamma>0\), with within-
and between-community edge probabilities \(\alpha L/m\) and
\(\beta L/m\), respectively, for fixed \(\alpha>\beta>0\).
In this regime, the number of communities grows logarithmically,
while the expected degree of each vertex is of order \(L^2\).
This setting addresses the SDP tightness question raised by
\citet[Open Problem~9.2]{Bandeira15}.

Our results characterize both the loss of tightness and the information
retained beyond it. Let \(a=\alpha-\beta\) and
\(\sigma=\sqrt{\beta\gamma}\). A vertex's degree margin against a
competing community is the difference between its degrees into its
own community and that competing community. In the spectrally stable
region \(a>2\sigma\), the surrounding noise reduces the effective margin
by a correction \(\tau L\), where
$
  \tau=\big(a-\sqrt{a^2-4\sigma^2}\big)/{2}.
$
Let \(I(x)\) denote the lower-tail rate for a degree margin of
size \(xL\), as defined in Section~\ref{sec:main}. We prove that
the SDP is tight with high probability when \(I(\tau)>1\),
whereas the planted matrix is not optimal with high probability
when \(I(\tau)<1\). In this local failure region, both the normalized matrix error and
the normalized optimal objective gain over the planted matrix
have size \(m^{-I(\tau)+o_{\Prob}(1)}\).
The matrix-error guarantee holds uniformly over all feasible
solutions whose objective value is at most 1 below the optimum.

\begin{figure}[t]
  \centering
  \includegraphics[width=\linewidth]{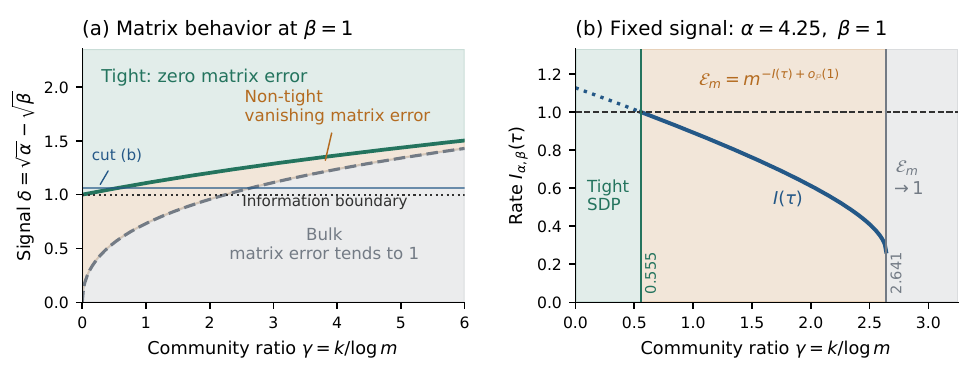}
  \caption{Tightness, matrix error, and the bulk transition.
  (a) Fixed-\(\beta\) regions from Theorems~\ref{thm:main}--\ref{thm:error}
  and Proposition~\ref{prop:bulk-value}; the dotted information boundary
  is prior work \citep{ZhangZhou16}.
  (b) Along the fixed-signal cut in (a), \(I(\tau)\) crosses one at
  \(\gamma\approx0.555\). On the orange interval, matrix error and
  objective gain have exponent \(I(\tau)\). On the green interval,
  the SDP is tight with high probability; the dotted curve continues the
  rate function, not the optimizer's error. The bulk boundary lies at
  \(\gamma\approx2.641\), beyond which \(\tau\) is not real.
  The colored regions describe strict asymptotic regimes.
  Zero-error annotations refer to exact SDP optimizers. Both panels
  are evaluations of theoretical formulas.}
  \label{fig:phase-diagram}
\end{figure}

Even when the planted matrix is not an optimal solution to the SDP,
the SDP output can still retain enough information for exact label
recovery. Throughout the region \(a>2\sigma\), we show that a single
SDP solve followed by explicit row clustering and refinement recovers
all labels with high probability whenever
\(I(0)=(\sqrt{\alpha}-\sqrt{\beta})^2>1\), the
information-theoretic exact-recovery condition. This guarantee
includes the entire information-recoverable part of the local
failure region, where the planted matrix is not optimal.
In the bulk regime \(a<2\sigma\), near-optimal matrices instead
have asymptotically zero normalized overlap with the planted
matrix, although this alone does not rule out label recovery.
Figure~\ref{fig:phase-diagram} summarizes the matrix regimes;
Section~\ref{sec:main} states the precise guarantees and
excluded critical cases.

Our proof identifies a common mechanism behind the tightness
boundary and the local error exponent. First, we use a Schur
complement to account for the surrounding graph, yielding the
correction \(\tau L\). Controlling this correction requires an
operator-norm resolvent estimate on a set of low-margin vertices
selected from the same graph, so independence cannot be assumed.
We then increase selected diagonal entries of the dual certificate
to restore positive semidefiniteness. Controlling the total size
of these adjustments yields upper bounds on both matrix error
and objective gain. For the matching lower bound, we build forests
rooted at many low-margin vertices, balancing their structure
across communities. We use these forests to construct a Gram
matrix that satisfies all SDP constraints and improves on the
planted objective by the required order. The upper and lower
bounds have the same exponent because both depend on how many
vertices have degree margins near \(\tau L\).

For exact label recovery, a weaker matrix-error bound suffices
to obtain an initial labeling with few misclassified vertices.
We then show that refinement reduces the number of errors
whenever the set of misclassified vertices is sufficiently small.
Because this guarantee holds for all such sets simultaneously,
it remains valid when the initial labeling and each refinement
step use the same graph.

\paragraph{Relation to prior work.}
We use the relaxation and dual certificate of
\citet{AgarwalBandeiraKoiliarisKolla15}. Their sufficient condition
covers logarithmically many communities but does not determine
the sharp constants in this regime.
\citet[Section~6, Theorem~17 and Remark~18]{HajekWuXu16}
show that the SDP can fail to be tight above the
information-theoretic exact-recovery threshold when the
logarithmic ratio \(\gamma\) is sufficiently large.
Theorem~\ref{thm:main} determines the sharp tightness boundary
for every fixed \(\gamma>0\), apart from the stated critical
cases, addressing the balanced-model SDP question raised by
\citet[Open Problem~9.2]{Bandeira15}.
Theorem~\ref{thm:error} further quantifies the matrix error
and objective gain in the local failure region.
\citet{ChenXu16} study broader statistical-computational
tradeoffs with a growing number of clusters. Their results
characterize recovery regimes up to constant factors,
whereas our analysis identifies the sharp constants
for the balanced SDP considered here.

Beyond tightness, \citet{FeiChen19} derive exponential bounds on
SDP matrix error and show how to obtain community labels by
clustering the rows of the SDP output. Their analysis also allows
the row constraints used here, but does not determine the sharp
error exponent in our regime. \citet{FeiChenBayes19} obtain sharp
Bayes exponents for the binary symmetric SBM and use a planted
dual certificate to bound errors even when tightness fails.
We build on this approach to obtain matching upper and lower
bounds with exponent \(I(\tau)\) for logarithmically many
communities. To translate matrix-error bounds into label-recovery
guarantees, we combine row clustering with local refinement,
an approach with established precedents
\citep{FeiChen19,GaoMaZhangZhou17}. Here we show that
the non-tight SDP output provides a sufficiently accurate
initialization for exact recovery under the stated conditions,
and justify reusing the same graph for initialization and
refinement. Appendix~\ref{app:comparison} provides detailed
comparisons of the models, feasible sets, and parameter ranges.

\section{Model and main results}
\label{sec:main}
\subsection{Model, SDP, and error measures}

Consider \(n=km\) vertices partitioned into fixed communities
\(C_1,\ldots,C_k\), each of size \(m\). Let \(c(i)\) denote
the community label of vertex \(i\). Under the regime
\begin{equation}
  L=\log m,\qquad k/L\longrightarrow\gamma\in(0,\infty),
  \label{eq:k-scaling}
\end{equation}
the adjacency matrix \(A\) is generated independently for \(i<j\) by
\begin{equation}
  A_{ij}\sim
  \begin{cases}
    \operatorname{Ber}(p),&c(i)=c(j),\\
    \operatorname{Ber}(q),&c(i)\ne c(j),
  \end{cases}
  \quad p=\alpha L/m,\quad q=\beta L/m,\quad \alpha>\beta>0,
  \label{eq:model}
\end{equation}
with \(A_{ji}=A_{ij}\) and \(A_{ii}=0\). Throughout,
\(\alpha,\beta,\gamma\) remain fixed as \(m\to\infty\).

Let \(Z^*\) be the planted community matrix, with entries
\(Z^*_{ij}=\mathbf1_{\{c(i)=c(j)\}}\). Assuming that the community
size \(m\) is known, we consider the semidefinite relaxation
\begin{equation*}
  \begin{aligned}
  V_m=\max_{Z\in\mathbb S^n}\quad\langle A,Z\rangle, \quad \text{subject to}\quad&Z\succeq0,\quad Z\geq0,\quad
        \diag Z=\one,\quad Z\one=m\one.
  \end{aligned}
  \tag{SDP}\label{eq:sdp}
\end{equation*}
Here \(\mathbb S^n\) is the space of real symmetric \(n\times n\)
matrices, \(Z\geq0\) denotes entrywise nonnegativity, and
\(\langle A,Z\rangle=\Tr(A^{\mathsf T}Z)\).
The vector \(\one\) has all entries equal to one, and
\(J=\one\one^{\mathsf T}\).
Write \(\mathcal F\) for the feasible set.
We call the SDP tight when \(Z^*\) is its unique optimizer.
In each failure region identified below, we show that \(Z^*\)
is not optimal by constructing a feasible matrix with strictly
larger objective.

We measure the SDP's behavior through the normalized matrix error,
optimal objective gain, and planted overlap:
\begin{equation}
  \mathcal E_m(Z)=\frac{\|Z-Z^*\|_{1,\mathrm{entry}}}{2nm},\qquad
  \mathcal G_m=\frac{V_m-\langle A,Z^*\rangle}{nL},\qquad
  \theta_m(Z)=\frac{\langle Z^*,Z\rangle}{nm}.
  \label{eq:error-metrics}
\end{equation}
Here \(\|\cdot\|_{1,\mathrm{entry}}\) denotes the sum of the
absolute values of all matrix entries.
Positive semidefiniteness and the unit diagonal, together with
entrywise nonnegativity, imply \(0\leq Z_{ij}\leq1\) for every
\(Z\in\mathcal F\). The row constraints then ensure that the
total missing mass within communities equals the total mass
between communities, giving
$
  \mathcal E_m(Z)=1-\theta_m(Z).
$
Thus \(\mathcal E_m\) measures error in the SDP matrix before
labels are extracted. Unless specified otherwise, matrix norms
are operator norms, and ``with high probability'' means with
probability tending to one as \(m\to\infty\) at fixed
\(\alpha,\beta,\gamma\).

To describe the local fluctuations that govern tightness, let
the degree of vertex \(i\) into community~\(C_s\), its margin
against a competing community, and its smallest such margin be
\begin{equation}
  d_{i,s}=\sum_{j\in C_s}A_{ij},\qquad
  D_{i,s}=d_{i,c(i)}-d_{i,s}\quad(s\ne c(i)),\qquad
  \Delta_i=\min_{s\ne c(i)}D_{i,s}.
  \label{eq:block-degrees}
\end{equation}
Let
\begin{equation}
  a=\alpha-\beta,\qquad \sigma=\sqrt{\beta\gamma}.
  \label{eq:a-sigma}
\end{equation}
The typical degree margin is \(aL\), while the centered spectral
noise has scale \(2\sigma L\).
For \(x\leq a\), define the degree-margin lower-tail rate as
\begin{equation}
  I_{\alpha,\beta}(x)=\alpha+\beta-\sqrt{x^2+4\alpha\beta}
       +x\log\frac{x+\sqrt{x^2+4\alpha\beta}}{2\alpha}.
  \label{eq:rate-function}
\end{equation}
Lemma~\ref{lem:margin-ldp} shows that, for each fixed \(x<a\),
\[
  \Prob(D_{i,s}\leq xL)=m^{-I_{\alpha,\beta}(x)+o(1)}.
\]
The rate function is strictly decreasing for \(x<a\), with
\(I_{\alpha,\beta}(a)=0\) and
\(I_{\alpha,\beta}(0)=(\sqrt\alpha-\sqrt\beta)^2\).
We abbreviate it as \(I(x)\) when the parameters are clear.

Finally, for \(a\geq2\sigma\), let the local correction be
\begin{equation}
  \tau=\frac{a-\sqrt{a^2-4\sigma^2}}2,\qquad
  \tau(a-\tau)=\sigma^2.
  \label{eq:tau}
\end{equation}
This correction accounts for the effect of the surrounding
spectral noise on a vertex's degree margin and determines
the boundary and error exponent stated below.

\subsection{Tightness and the local error exponent}

Our first result gives sharp conditions for the planted matrix
to be the unique SDP solution. Away from the stated critical cases, tightness requires both sufficient
signal relative to spectral noise and sufficiently large degree
margins, expressed by \(a>2\sigma\) and
\(I_{\alpha,\beta}(\tau)>1\), respectively. The theorem leaves
the specified critical cases unresolved.

\begin{theorem}[Sharp tightness boundary]
\label{thm:main}
Under \eqref{eq:k-scaling}--\eqref{eq:model}, \(Z^*\) is the unique
SDP optimizer with high probability if
\begin{equation}
  a>2\sigma,\qquad I_{\alpha,\beta}(\tau)>1.
  \label{eq:success-region}
\end{equation}
It is not optimal with high probability if either
\begin{equation}
  a<2\sigma,
  \label{eq:spectral-failure}
\end{equation}
or
\begin{equation}
  a\geq2\sigma,\qquad I_{\alpha,\beta}(\tau)<1.
  \label{eq:local-failure}
\end{equation}
No assertion is made in either of the remaining critical cases:
\(a=2\sigma\) with \(I_{\alpha,\beta}(\sigma)\geq1\), or
\(a>2\sigma\) with \(I_{\alpha,\beta}(\tau)=1\).
\end{theorem}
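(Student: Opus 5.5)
For the tightness half I will use the dual certificate of \citet{AgarwalBandeiraKoiliarisKolla15}. A dual triple consists of a diagonal matrix \(D=\diag(d_i)\) for the constraint \(\diag Z=\one\), a vector \(y\) for \(Z\one=m\one\), and an entrywise nonnegative \(B\) supported off the planted blocks for \(Z\geq0\). Complementary slackness with \(Z^*\) then forces
\[
  S=D+\tfrac12(y\one^{\mathsf T}+\one y^{\mathsf T})-B-A\succeq0,\qquad SZ^*=0 .
\]
Uniqueness follows if, in addition, \(S\) is positive definite on the orthogonal complement of the span of the community indicators and \(B>0\) strictly off the blocks.

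The condition \(SZ^*=0\) determines \(d_i+y\)-type quantities row by row from the within-community degrees \(d_{i,c(i)}\). I will choose \(B_{ij}\) on the block \((c(i),s)\) so that it absorbs the mean \(q\) together with the degree imbalance \(d_{i,s}\). The reduced requirement is then
\[
  \diag(\Delta_i-\text{shift})-(A-\E A)\succeq0
\]
restricted to the complement. The naive bound \(\|A-\E A\|\approx2\sigma L\) would require \(\min_i\Delta_i>2\sigma L\), which is not sharp.

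To get the sharp condition I will split the vertices into a bulk \(G\) with \(\Delta_i\geq(a-\epsilon)L\) and a rare set \(R\) of vertices with \(\Delta_i\) below that level. I then take the Schur complement of the \(G\)-block. On \(G\) the diagonal is about \(aL\) and the noise norm is about \(2\sigma L\). The resolvent \((aL-W_{GG})^{-1}\) therefore exists precisely when \(a>2\sigma\). Its contribution to a vertex \(i\in R\) is \(W_{iG}(aL-W_{GG})^{-1}W_{Gi}\approx\sigma^2L\,g(a)\), where \(g\) is the semicircle Stieltjes transform at \(a/\sigma\). This contribution equals \(\tau L\), using \(\tau(a-\tau)=\sigma^2\).

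The reduced condition on \(R\) becomes \(\Delta_i>\tau L+o(L)\) for every \(i\in R\), together with control of the off-diagonal Schur block on \(R\). By Lemma~\ref{lem:margin-ldp} and a union bound over the \(n\approx mL\) vertices and \(k\) competitors, no vertex violates \(\Delta_i>(\tau+\epsilon)L\) with high probability when \(I(\tau)>1\).

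The main obstacle is the operator-norm resolvent estimate. The set \(R\) is selected from the same graph, so \(W_{iG}\) is not independent of \(G\). I will first try to decouple by conditioning on the degree vectors, or by a leave-one-out argument: replace \(G\) by \(G\setminus\{i\}\)-dependent quantities and bound the resulting perturbation. \(R\) is sparse (size \(m^{1-I+o(1)}\)) and its vertices are essentially non-adjacent, so the off-diagonal Schur terms on \(R\) should be \(o(L)\) by a sparse-graph norm bound.

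For the failure case \(I(\tau)<1\) with \(a\geq2\sigma\), I will construct an explicit feasible \(Z\) with \(\langle A,Z\rangle>\langle A,Z^*\rangle\). By Lemma~\ref{lem:margin-ldp} there are with high probability vertices \(i\) with \(D_{i,s}\leq(\tau-\epsilon)L\). Around such an \(i\) I will perturb \(Z^*\) by a Gram-matrix modification that moves a small weight \(\eta\) of the vector of \(i\) toward community \(s\). The modification is propagated along a depth-\(r\) tree of neighbours with geometrically decaying weights mimicking the resolvent eigenvector, so that the second-order cost is \(\eta^2(\text{local}-\tau)L\). I will compensate the row constraints by a balanced forest across communities, and keep nonnegativity by working around entries bounded away from \(0\) and \(1\). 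Optimizing \(\eta\) gives a gain proportional to \((\tau L-D_{i,s})\) minus lower-order terms, which is positive.

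For \(a<2\sigma\) I will use a global spectral perturbation instead: add \(\eta\,vv^{\mathsf T}\) with \(v\) a top eigenvector of the centered noise projected onto the constraint-compatible subspace and correct it to feasibility. The gain is \(\eta(\lambda_{\max}-aL)>0\) because \(\lambda_{\max}\approx2\sigma L>aL\). I expect checking entrywise nonnegativity of this correction to be delicate but routine.
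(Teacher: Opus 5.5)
Your success half has the same architecture as the paper: the same certificate, reduction to \(\diag(\widehat d_i)-W\) on \(\mathcal H\), a Schur complement onto low-margin vertices producing the shift \(\tau_zL\), and a union bound using \(I(\tau)>1\). However, the step that makes the boundary sharp is left open, and the heuristic you give for it is false. With bulk threshold \((a-\epsilon)L\) and small \(\epsilon\), the rare set has \(nm^{-I(a-\epsilon)+o(1)}\) vertices, with \(I(a-\epsilon)\) near zero. It is polynomially large and typically spans many edges, so it is not essentially non-adjacent.

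More to the point, the off-diagonal Schur entry between rare vertices at graph distance \(h\) comes from paths through the bulk and has size of order \(L^{1-h}\). An \(o(L)\) operator bound therefore needs every rare vertex to have \(o(L^h)\) rare vertices at distance \(h\). A bound on the size of the rare set says nothing about such clustering. Neither conditioning on degrees nor leave-one-out, as sketched, gives operator-norm control on a graph-selected set.

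The missing input is a local first-moment count. By the conditional joint tail in Lemma~\ref{lem:margin-ldp}, a connected configuration with \(v\) vertices, \(e\) edges and \(j\) marks has at most \(n^vp^em^{-jI(t)+o(1)}\) expected embeddings. Hence, with high probability, every marked radius-\(r\) ball is a tree, and every radius-\(r\) ball contains fewer than \(J_t>1/I(t)\) marks.

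The paper also never handles the resolvent of the random submatrix \(W_{GG}\). It compares with \(zLI-W\), and inverse monotonicity absorbs the varying bulk diagonal. It then uses \(C_{\mathcal S}^{-1}=P_{\mathcal S}(zLI-W)^{-1}P_{\mathcal S}\) and bounds its deviation from \((g_z/L)I\) by the same deviation compressed to the marked superset \(\mathcal S_t\). Finally, it proves \(P_{\mathcal S_t}X^\ell P_{\mathcal S_t}\to\mu_\ell P_{\mathcal S_t}\) in norm by walk counting on those trees, removal of \(\E A\), and summation of the Neumann series.

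Both failure constructions also have gaps. In the local case, the balanced forest cannot merely compensate one low-margin root. The test vector needs identical coordinate multisets in all \(K=2\lfloor k/2\rfloor\) active communities, so each of them contributes a root with weight \(w_0\). A root of typical margin then adds about \((\tau_R-a)L<0\) to \(\mathcal Q(y)\). The paper therefore fixes a perfect matching \(\pi\) and takes, in every active community, a root with \(D_{i,\pi(c(i))}\le xL\) for some \(x<\tau\); such roots exist because \(I(x)<1\).

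Moreover, \(Z^*\) has no entries bounded away from \(0\) and \(1\). The within-community centering required for the row sums puts \(-\mu\) on every non-forest vertex. This creates negative entries \(ty_iy_j\) between unmatched active communities. The paper removes them by mixing in a second Gram matrix built from \(\bar e+c_i(e_{c(i)}-\bar e)+\sqrt t\,y_if\), with weight \(2(K-1)\mu/\ell=o(1)\). This is why Lemma~\ref{lem:finite-gram} needs the two-level sign structure and identical multisets.

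In the bulk case, nonnegativity is exactly where your rank-one plan breaks. For \(v\in\mathcal H\), \(\sum_{\text{cross}}v_iv_j=0\). Once the negative cross entries of \(\eta vv^{\mathsf T}\) are filled, the cross mass is at least \(\tfrac{\eta}{2}\sum_{\text{cross}}|v_iv_j|\asymp\eta\|v\|_1^2\asymp\eta n\) for a delocalized eigenvector. Hence \(\Tr(\Pi Z)\gtrsim\eta k\), of which the rank-one part supplies only \(\eta\). Unless the remainder is itself concentrated at the top of the spectrum, \(\langle\mathsf S,Z\rangle\) charges order \(\eta kL\) against a gain of only \(\eta(2\sigma-a)L\).

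The paper instead uses a localized positive test vector: forests from typical roots with \(w_\ell=\sin((\ell+1)\pi/(R+2))\) and \(2\sigma\cos(\pi/(R+2))>a\), fed into the same Gram lemma. Proposition~\ref{prop:bulk-value} gives a second proof through a nonnegative polynomial Gram matrix.
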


When \(a>2\sigma\) but \(I_{\alpha,\beta}(\tau)<1\), the planted
matrix is not optimal, yet near-optimal SDP matrices remain close
to it. The next theorem determines the rate at which their
normalized error vanishes, together with the rate of the
normalized objective gain. The bounds hold simultaneously
for all feasible solutions within an additive one of the optimum,
including every optimal solution.

\begin{theorem}[Matching local error exponent]
\label{thm:error}
Assume \eqref{eq:k-scaling}--\eqref{eq:model},
\(a>2\sigma\), and \(r=I_{\alpha,\beta}(\tau)\in(0,1)\).
For every fixed \(\varepsilon>0\), with high probability,
\begin{equation}
  m^{-r-\varepsilon}\leq\mathcal G_m\leq m^{-r+\varepsilon},
  \label{eq:gain-exponent}
\end{equation}
and simultaneously, for every \(Z\in\mathcal F\) satisfying
\(\langle A,Z\rangle\geq V_m-1\), we have
\begin{equation}
  m^{-r-\varepsilon}\leq\mathcal E_m(Z)\leq m^{-r+\varepsilon}.
  \label{eq:error-exponent}
\end{equation}
\end{theorem}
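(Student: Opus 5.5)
The proof has four parts: an upper bound on the gain \(\mathcal G_m\), an upper bound on the matrix error \(\mathcal E_m(Z)\), a lower bound on the gain, and a lower bound on the error. All four rest on the dual certificate of \citet{AgarwalBandeiraKoiliarisKolla15}, adjusted on a sparse set of low-margin vertices.

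\emph{Upper bounds.} Write the certificate as a dual slack matrix \(S=\diag(y)+z\one^{\mathsf T}+\one z^{\mathsf T}-A-N\), with \(N\geq0\) supported off the blocks, and choose it so that \(SZ^*=0\). Its Schur complement relative to \(\range Z^*\) and the block-constant directions is governed by the centered noise \(A-\E A\), whose restriction has norm about \(2\sigma L\). For a vertex \(i\) the effective diagonal entry is approximately \(\Delta_i-\tau L\). Here \(\tau\) is the fixed point \(\tau(a-\tau)=\sigma^2\) of the resolvent recursion, so \(\tau L\) is the contribution from the surrounding graph.

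Let \(B=\{i:\Delta_i\leq(\tau+\delta)L\}\). I would raise \(y_i\) by an amount \(\eta_i\lesssim L\) for \(i\in B\). This makes the adjusted certificate PSD, at a dual cost
\[
V_m-\langle A,Z^*\rangle\leq \sum_{i\in B}\eta_i\lesssim |B|\,L .
\]
A first-moment bound with Lemma~\ref{lem:margin-ldp} and a union over the \(k\) competing communities gives \(|B|\leq n\,m^{-I(\tau+\delta)+o(1)}\) with high probability. Since \(I\) is continuous, letting \(\delta\to0\) yields \(\mathcal G_m\leq m^{-r+\varepsilon}\).

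For the error, I would use that the unadjusted part of the certificate has a spectral gap of order \(\delta L\) orthogonal to \(\range Z^*\). For any feasible \(Z\) this gives
\[
\langle A,Z^*\rangle-\langle A,Z\rangle+\sum_{i\in B}\eta_i Z_{ii}\geq c\,\delta L\,\|Z-Z^*\|_{1,\mathrm{entry}}/m-\text{(terms on \(B\))}.
\]
Combining this with \(\langle A,Z\rangle\geq V_m-1\geq\langle A,Z^*\rangle-1\) gives \(\mathcal E_m(Z)\lesssim (|B|L+1)/(nL)\leq m^{-r+\varepsilon}\), uniformly over all such \(Z\).

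\emph{Lower bounds.} I would construct a feasible \(Z\) explicitly. For each vertex in a well-separated subfamily of low-margin vertices \(i\) with \(\Delta_i\leq(\tau-\delta)L\), I build a local forest rooted at \(i\), running into the competing community \(s\) attaining the margin. These forests define a perturbation direction \(v_i\): shift \(i\)'s membership mass by \(\epsilon\) toward \(C_s\), and propagate along the forest with weights given by the resolvent. The Gram matrix is then \(Z^*\) plus these rank-structured perturbations, rebalanced across communities so that \(Z\one=m\one\), \(\diag Z=\one\) and \(Z\geq0\) still hold.

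The first-order gain per root is about \(\epsilon L(\tau-\Delta_i/L)\), and the second-order loss is \(O(\epsilon^2L)\). Optimizing \(\epsilon\) gives a gain of order \(\delta^2L\) per root. The number of roots is \(n\,m^{-I(\tau-\delta)-o(1)}\) with high probability, by a second-moment argument on the near-independent margins. This gives \(\mathcal G_m\geq m^{-r-\varepsilon}\).

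For the error lower bound, any \(Z\) with \(\langle A,Z\rangle\geq V_m-1\) satisfies
\[
\langle A,Z\rangle-\langle A,Z^*\rangle\geq nL\mathcal G_m-1 .
\]
Entrywise, \(|\langle A,Z-Z^*\rangle|\) is at most the degree bound \(O(L^2/m)\) times \(\|Z-Z^*\|_{1,\mathrm{entry}}\), after subtracting the expectation via the row constraints. The centered part is handled using the spectral bound. Together these give \(\mathcal E_m(Z)\gtrsim \mathcal G_m/\mathrm{polylog}\), which is at least \(m^{-r-\varepsilon}\).

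\emph{Main obstacle.} The hardest step is the operator-norm resolvent estimate showing that the Schur correction on \(B\) is \(\tau L+o(L)\) uniformly. The set \(B\) is selected from the same graph, so independence cannot be assumed. I would first try to condition on the degree profile of \(B\) and use that \(B\) is sparse and locally tree-like. I would then bound the residual noise restricted to the complement \(B^c\), uniformly over all small vertex sets, via a union bound combined with matrix Bernstein.
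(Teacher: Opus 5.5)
Your upper-bound skeleton matches the paper: repair the planted dual certificate on the low-margin set, pay $O(|B|L)$ via margin tails, and read off the error from the remaining gap on $\cH$. Your error lower bound also matches, since it reduces to the one-sided bound $\langle A,Z-Z^*\rangle\le CL\Tr(\Pi Z)$.

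The genuine gap is the lower-bound construction. The clause ``rebalanced across communities so that $Z\one=m\one$, $\diag Z=\one$ and $Z\geq0$ still hold'' is where all the difficulty lies. The target gain $nL\,m^{-r-\varepsilon}$ is a vanishing fraction of $\langle A,Z^*\rangle\approx\alpha nL$. So the constraints must be restored exactly at cost $o(kNL)$, and a generic correction gives no such control.

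The direction you describe also cannot deliver the first-order gain $\epsilon L(\tau-\Delta_i/L)$ you claim. A forest running into the single competing community $C_s$ uses only $O(L)$ of the $\approx\sigma^2L^2$ neighbors. That is an $O(1/L)$ share of the noise variance, so it yields a correction $O(1)$ rather than $\tau L$; the correction $\tau$ comes from $\sigma^2=\beta\gamma$ aggregated over $\Theta(L)$ communities. In addition, shifting each root toward its own argmin community creates unbalanced inter-community mass flows, which is exactly what breaks the row sums.

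The paper supplies the missing ideas as follows:
\begin{itemize}
\item Fix a perfect matching $\pi$ on $K=2\lfloor k/2\rfloor$ communities before sampling. Select roots by their margin against the \emph{matched} community. This choice is $\mathcal F_{\rm match}$-measurable, so forest edges to unmatched communities remain fresh $\mathrm{Ber}(q)$ variables even though the roots depend on the graph.
\item Grow depth-$R$ forests with $\approx\beta L$ children in \emph{each} of the $K-2$ other active communities.
\item Explore roots in groups containing one root from every active community, and discard failed groups. The centered weighted vector $y$ then has identical coordinate multisets in all active communities.
\item Use Gram vectors that rotate each active vertex from $e_{c(i)}$ toward $e_{\pi(c(i))}$ with a shared component $\sqrt t\,y_if$. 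This gives exact unit diagonal and row sums.
\item Mix with a second Gram matrix at weight $\eta=2(K-1)\mu/\ell=o(1)$ to cancel the negative entries $ty_iy_j$ between unpaired communities. Here $-\mu$ is the negative coordinate created by centering and $\ell$ is the smallest positive coordinate.
\end{itemize}
Only then is the gain at least $t\mathcal Q(y)-O(t^2+\eta t)L\|y\|_2^2$, with $\mathcal Q(y)/(KNL)$ bounded below by $\tau_R-x>0$ through the tridiagonal path optimization.

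Your plan for the main obstacle would also fail as stated. The estimate $\|P_S[(zI-X)^{-1}-g_zI]P_S\|\to0$ is false for general small sets. Take $S$ to be a vertex together with its neighbors: the resolvent entries on the $\approx\sigma^2L^2$ edges are $\approx g_z^2/L$, forming a star of norm $\approx g_z^2\sigma$. Hence no union bound over small vertex sets, with matrix Bernstein or otherwise, can give it; you need properties specific to the marked set.

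The paper obtains these from the joint margin tail conditional on finitely many edges. Configurations with $v$ vertices, $e$ edges and $j$ marks have $n^vp^em^{-jI(t)+o(1)}$ expected embeddings. So with high probability every marked radius-$R$ ball is a tree and every ball contains fewer than $J_t>1/I(t)$ marks. Walk counting on $A/L$ then gives compressed moments $\mu_\ell P_{\cS_t}$ in operator norm, with the bounded number of nearby marks controlling off-diagonal row sums. The mean matrix is removed using $|\cS_t|\le nm^{-c}$, and the resolvent series is summed using $\|X\|\le2\sigma+o_{\Prob}(1)$.

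A smaller point concerns the upper bound. Literally raising $y_i$ on $B$ does not make the dual slack positive semidefinite. The slack annihilates block-constant vectors, and a block-constant shift of a negative direction $h\in\cH$ can vanish on $B$, for instance when $B$ meets each block at most once. The paper instead adds $\Pi F\Pi$ and uses $\langle A,Z^*-Z\rangle=\langle\mathsf S,Z\rangle+\langle\Gamma,Z\rangle$ with $0\le(\Pi Z\Pi)_{ii}\le2$, at cost $2\Tr F$.
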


Thus both matrix error and objective gain have exponent
\(r=I(\tau)\): their logarithms, divided by \(\log m\),
converge in probability to \(-r\), uniformly over the stated
class of matrices for the error guarantee. This exponent
reflects the rarity of vertices whose degree margins fall
below the correction~\(\tau L\). The result determines the
decay rate up to subpolynomial factors; it does not specify
a multiplicative prefactor, an expected risk, or behavior
within a critical window.

The tightness boundary differs from the threshold for exact
label recovery. Information-theoretically, exact label recovery
is possible when \(I_{\alpha,\beta}(0)>1\) and impossible
when \(I_{\alpha,\beta}(0)<1\) under the uniform balanced prior
\citep{ZhangZhou16,AbbeBandeiraHall16}.
Appendix~\ref{app:consequences} gives the precise normalization
and converse. Whenever the local correction is defined,
\(I(\tau)<I(0)\), so the local condition for tightness is
more restrictive than the information-theoretic condition.
Together with the spectral failure condition, this implies
that every fixed \(\beta,\gamma>0\) admits an interval
immediately above the information threshold in which
\(Z^*\) is not optimal (Corollary~\ref{cor:gap}).

\subsection{Optimal value and planted overlap}

The preceding theorem describes the error caused by local
fluctuations when \(a>2\sigma\). We now characterize the leading
optimal value and the overlap with the planted matrix on both
sides of the spectral boundary \(a=2\sigma\).

\begin{proposition}[Optimal value and planted overlap]\label{prop:bulk-value}
Under \eqref{eq:k-scaling}--\eqref{eq:model},
\begin{equation}
  \frac{V_m}{nL}\xrightarrow{\Prob}\beta+\max\{a,2\sigma\}.
  \label{eq:bulk-value}
\end{equation}
For any deterministic \(s_m\geq0\) with \(s_m=o(nL)\),
the following limits hold in probability, uniformly over
all \(Z\in\mathcal F\) satisfying
\(\langle A,Z\rangle\geq V_m-s_m\):
\[
  \theta_m(Z)\longrightarrow
  \begin{cases}
    1,&a>2\sigma,\\
    0,&a<2\sigma.
  \end{cases}
\]
\end{proposition}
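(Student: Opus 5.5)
The plan is to split \(A\) into its mean and a centered part,
\[
  A=\E A+W,\qquad \E A=(p-q)Z^*+qJ-p I,
\]
and then to bound \(\langle A,Z\rangle\) from above and below uniformly over \(Z\in\mathcal F\).

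\textbf{Upper bound.} For \(Z\in\mathcal F\) we have \(\langle J,Z\rangle=nm\) and \(\langle I,Z\rangle=n\), which gives
\[
  \langle \E A,Z\rangle=aL\,n\,\theta_m(Z)+\beta L n-O(L).
\]
Write \(P=\frac1n J\) and \(P_\perp=I-P\). Since \(Z\one=m\one\), we get \(Z=\frac mn J+P_\perp ZP_\perp\). Also \(\one^{\mathsf T}W\one=O_{\Prob}(\sqrt{n^2q})=o(nL)\). Hence
\[
  \langle W,Z\rangle\le \|P_\perp WP_\perp\|\,\Tr(P_\perp Z)+o_{\Prob}(nL)\le \|W\|\,n+o_{\Prob}(nL).
\]
The key input is the spectral estimate \(\|W\|\le(2\sigma+o(1))L\) with high probability. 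The variance profile has row sums \(\approx nq=\beta\gamma L^2=\sigma^2L^2\), so the semicircle edge is \(2\sigma L\). Degrees are of order \(L^2\gg\log n\), so Bandeira--van Handel-type bounds give \(\|W\|\le 2\sigma L(1+o(1))+O(\sqrt{\log n})\). This is presumably an earlier lemma of the paper.

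The estimate above is too crude on its own, since we need \(\max\{a,2\sigma\}\) and not \(a\theta+2\sigma\). The remedy is to use \(P_\perp ZP_\perp\succeq0\) with trace \(n-k\), and to split further along \(\range Z^*\). Let \(Q\) be the projection onto the span of the community indicators. Then \(\langle Z^*,Z\rangle=m\Tr(QZ)\), so
\[
  \Tr(QZ)=n\theta_m(Z),\qquad \Tr(Q_\perp Z)=n(1-\theta_m(Z)).
\]
The mean term contributes \(aL\,\Tr(QZ)\). The noise contributes at most \(\|W\|\Tr(Q_\perp Z)\) plus cross terms \(2\langle QWQ_\perp,Z\rangle\) and a term \(\langle QWQ,Z\rangle\). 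We have \(\|QWQ\|=O(\sqrt{L}\cdot\text{small})=o(L)\) since it is a \(k\times k\) matrix of averaged noise. For the cross term, \(|\langle QWQ_\perp,Z\rangle|\le \|Q_\perp WQ\|\,\|Z^{1/2}Q\|_F\|Z^{1/2}Q_\perp\|_F\), and \(\|Q_\perp W Q\|\) is of order \(\sqrt{nq/m}\cdot\sqrt{\cdot}=O(\sqrt{kL})\)-ish, which is \(o(L)\) after normalization in this regime; I will verify this. This yields
\[
  \frac{\langle A,Z\rangle}{nL}\le \beta+a\theta+2\sigma(1-\theta)+o_{\Prob}(1)\le\beta+\max\{a,2\sigma\}+o(1),
\]
uniformly in \(Z\). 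The same display gives the overlap claims. If \(a>2\sigma\) and \(\langle A,Z\rangle\ge V_m-s_m\ge nL(\beta+a-o(1))\), then \((a-2\sigma)(1-\theta)\le o(1)\), so \(\theta\to1\). The case \(a<2\sigma\) requires a stronger upper bound, described next.

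\textbf{Lower bound and the overlap for \(a<2\sigma\).} When \(a\ge 2\sigma\), take \(Z^*\): \(\langle A,Z^*\rangle=nL(\beta+a)+o\), using degree concentration. When \(a<2\sigma\), the matching lower bound \(\beta+2\sigma\) and \(\theta\to0\) both need the fact that \(\sup\{\langle W,Y\rangle\}\) over feasible-like \(Y\) is about \(2\sigma Ln\) (this is the \(\mathbb Z_2\)/Montanari--Sen-type statement), and that achieving it is incompatible with positive overlap. I would first cite or prove a bulk lemma (likely the paper's spectral-failure construction behind \eqref{eq:spectral-failure}) giving a feasible \(Z\) with
\[
  \langle A,Z\rangle\ge nL(\beta+2\sigma-o(1)).
\]
A natural candidate is a projection onto the top \(\approx n-k\) eigenspace of \(P_\perp WP_\perp\), convexly mixed with \(\frac mn J\) and repaired for the diagonal, nonnegativity, and row constraints. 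The repair is the main obstacle, because nonnegativity is not automatic. I would try \(Z=\frac mn J+\epsilon\,\cdot\)(normalized spectral part), with \(\epsilon\) tuned so that entries stay \(\ge0\) via delocalization. Perhaps the cleaner route is a Grothendieck/Montanari--Sen-style argument: the SDP value of \(W\) over the relaxed constraints (\(\diag=\one\), \(Z\one=m\one\), PSD) equals \((2\sigma+o(1))Ln\), and since entries \(Z_{ij}\approx m/n+O(\text{small})\), nonnegativity binds only at lower order when \(m\gg\) the fluctuations. For \(\theta\to0\): if \(\theta\ge\delta\), then the bound \(a\theta+2\sigma(1-\theta)\) falls short of \(2\sigma\) by \((2\sigma-a)\delta\), contradicting near-optimality. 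Hence this case follows from the same upper bound, and only the lower bound is genuinely hard.
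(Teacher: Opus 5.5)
There is a genuine gap in your upper bound, at the cross term, where you wrote ``I will verify this.'' The claim \(\|Q_\perp WQ\|=o(L)\) is false. Your own heuristic \(\sqrt{kL}\) already shows this, since \(k\asymp\gamma L\) makes \(\sqrt{kL}=\Theta(L)\). Concretely, let \(U\) hold the normalized block indicators. Then \(WU\) has entries \((d_{i,r}-\E d_{i,r})/\sqrt m\), so each column has squared norm \((\sigma^2+o(1))L^2\) with high probability. Since \(\|QWQ\|=o(1)\), this gives \(\|Q_\perp WQ\|\geq(\sigma-o(1))L\).

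Your Cauchy--Schwarz step therefore leaves an extra term of order \(\sigma nL\sqrt{\theta(1-\theta)}\). Because \(\sqrt{1-\theta}\gg1-\theta\) near \(\theta=1\) and \(\sqrt\theta\gg\theta\) near \(\theta=0\), the resulting bound exceeds \(\beta+\max\{a,2\sigma\}\). It yields neither the value upper bound nor either overlap limit. Positive semidefiniteness alone cannot close this.

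The missing idea is to control every term containing \(Q\) through entrywise nonnegativity and the row sums, as the paper does. Since \((QW)_{ij}=(d_{j,c(i)}-\E d_{j,c(i)})/m\), you have \(|(QW)_{ij}|\leq u_j/m\) with \(u_j=\max_s|d_{j,s}-\E d_{j,s}|\). Summing against the nonnegative columns of \(Z\), each of total mass \(m\), gives \(|\langle QW,Z\rangle|\leq\sum_ju_j\). The same bound holds for \(WQ\) and \(QWQ\). Bernstein's inequality with a union over the \(k\) blocks gives \(\E u_j=O(\sqrt{L\log k}+\log k)=o(L)\), so these terms are \(o_{\Prob}(nL)\) uniformly over \(\mathcal F\). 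The maximum over blocks costs only \(\sqrt{\log k}\), whereas the operator norm pays \(\sqrt k\). Now combine this with \(W=\Pi W\Pi+QW+WQ-QWQ\) (your \(Q_\perp\) is \(\Pi\)), \(\Pi Z\Pi\succeq0\), and \(\|W\|\leq(2\sigma+o_{\Prob}(1))L\). This yields your target bound \(\beta+a\theta+2\sigma(1-\theta)+o_{\Prob}(1)\), and your overlap deductions then go through.

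The second gap is that for \(a<2\sigma\) you never produce a feasible matrix with \(\langle A,Z\rangle\geq(\beta+2\sigma-o(1))nL\). For \(a\geq2\sigma\), \(Z^*\) suffices, as you say. The candidates you list do not work as stated:
\begin{itemize}
\item A projection onto the top \(n-k\) eigenvectors has normalized noise value near zero, because the spectrum is asymptotically symmetric.
\item A rescaled projection onto a vanishing top fraction would need entrywise eigenvector control to obtain exactly unit diagonal and entries at least \(-1/k\).
\item The Montanari--Sen results concern bounded degree and lack the row-sum and nonnegativity constraints, so there is no ready value to cite for this feasible set.
\end{itemize}

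The paper avoids eigenvectors entirely. It takes \(F=(Y+2\sigma I)^R\) with \(Y=A/L\), which is entrywise nonnegative, and normalizes \(C=DF^2D\) to unit diagonal. It then centers \(H=P_0CP_0\) with \(P_0=I-J/n\), which creates negative entries of size only \(O(L^{2R}/n)\ll1/k\). Finally it sets \(Z=J/k+(1-\zeta)H+P_0\diag(r)P_0\) to restore the exact diagonal and row sums. Locally tree-like closed-walk counts give \(\langle W,H\rangle/(nL)\to4\sigma R/(2R+3)\), which increases to \(2\sigma\). (Minor point: the \(pI\) term contributes \(pn=\alpha kL=O(L^2)\), not \(O(L)\), though this is still \(o(nL)\).)
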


When \(a>2\sigma\), both objectives have leading term
\(\alpha nL\), so the gain in Theorem~\ref{thm:error}
is of smaller order. When \(a<2\sigma\),
\(\mathcal G_m\to2\sigma-a>0\) and
\(\mathcal E_m(Z)\to1\) in probability for the near-optimal
solutions above. Vanishing normalized overlap alone does
not rule out label recovery. At \(a=2\sigma\), the proposition
gives the limiting value but does not provide an overlap guarantee.

\section{Establishing the sharp error exponent}
\label{sec:proof-ideas}

We outline the proof of Theorem~\ref{thm:error}.
For the upper bounds, we adjust the dual certificate to control
both matrix error and objective gain. For the matching lower
bounds, we construct a feasible matrix whose objective
improvement forces every near-optimal solution to have
matrix error with the same exponent.
Both arguments depend on the number of vertices with degree
margins near \(\tau L\), yielding the common exponent
\(I(\tau)\). Figure~\ref{fig:mechanism} summarizes the
argument, and we provide detailed proofs in Appendices~\ref{sec:inputs}--\ref{sec:failure}.

\begin{figure}[ht]
  \centering
  \input{figures/mechanism.tex}
  \caption{The upper and lower bounds in Theorem~\ref{thm:error}.
  To leading order, eliminating the surrounding graph reduces a vertex's
  effective margin by \(\tau L\).
  Repairing the corresponding small diagonals bounds the error from
  above. Many low-margin roots, organized into balanced forests and
  used in a finite-step Gram construction, give a feasible objective
  improvement. The cost and gain have the same exponent because both
  are controlled by the number of vertices with margin near \(\tau L\).}
  \label{fig:mechanism}
\end{figure}

\subsection{The correction to the degree margin}
Let \(W=A-\E A\) and \(X=W/L\). The sharp sparse-matrix
bound of \citet{BenaychGeorgesBordenaveKnowles20} gives
\(\|X\|\leq2\sigma+o_{\Prob}(1)\)
(Lemma~\ref{lem:spectral-edge}).
For \(z>2\sigma\), let
\begin{equation}
  g_z=\frac{z-\sqrt{z^2-4\sigma^2}}{2\sigma^2},\qquad
  \tau_z=\sigma^2g_z.
  \label{eq:gc-tauc}
\end{equation}
Note that when \(a>2\sigma\), we take \(z=a\), and it gives \(\tau_a=\tau\).

To explain the correction, consider a coordinate with margin
\(d\), coupled to the remaining coordinates through a vector
\(b\). If the quadratic form on those remaining coordinates
is represented by a positive definite matrix \(C\), then
$  \min_v\{d-2b^{\mathsf T}v+v^{\mathsf T}Cv\}
  =d-b^{\mathsf T}C^{-1}b.$
Thus optimizing over the remaining coordinates reduces the
effective margin by \(b^{\mathsf T}C^{-1}b\).
When the background diagonal is \(zL\), this reduction is
\(\tau_zL\) to leading order. Estimating this quantity directly
gives a sharper correction than the global norm bound
\(2\sigma L\).

To estimate this reduction at vertices with small
degree margins, fix \(u\in(0,a)\) and let
\begin{equation}
  \mathcal S_u=\{i:\Delta_i\leq uL\},
  \label{eq:marked-set}
\end{equation}
and let \(P_S\) be the coordinate projection onto \(S\).
The following proposition shows that the resolvent, restricted
to these vertices, is asymptotically a scalar multiple of
the identity.

\begin{proposition}[Resolvent on marked vertices]
\label{prop:marked-resolvent}
For every fixed \(u\in(0,a)\) and \(z>2\sigma\),
the inverse \((zI-X)^{-1}\) exists with high probability, and
\begin{equation}
  \left\|P_{\mathcal S_u}
     \bigl[(zI-X)^{-1}-g_zI\bigr]P_{\mathcal S_u}\right\|
  \xrightarrow{\Prob}0.
  \label{eq:marked-resolvent}
\end{equation}
\end{proposition}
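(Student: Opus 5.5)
Existence of the inverse is immediate from Lemma~\ref{lem:spectral-edge}: with high probability \(\|X\|\leq2\sigma+\delta<z\) for a small fixed \(\delta>0\). Then \(R_z=(zI-X)^{-1}\) exists and \(\|R_z\|\leq(z-2\sigma-\delta)^{-1}\). For the main claim I would use the Schur complement formula entrywise on \(\mathcal S=\mathcal S_u\), writing \(T=\mathcal S^c\). With high probability the set \(\mathcal S\) is small: \(|\mathcal S|\leq n\,m^{-I(u)+o(1)}\) by Lemma~\ref{lem:margin-ldp} and a union over \(k\) competing communities. Block inversion gives
\[
P_{\mathcal S}R_zP_{\mathcal S}=\bigl(zI_{\mathcal S}-X_{\mathcal S\mathcal S}-X_{\mathcal S T}(zI_T-X_{TT})^{-1}X_{T\mathcal S}\bigr)^{-1}.
\]
So it suffices to show that the matrix in parentheses is, in operator norm, \((z-\tau_z)I_{\mathcal S}+o_{\Prob}(1)\). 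Since \(1/(z-\tau_z)=g_z\) by the self-consistent equation \(g_z=1/(z-\sigma^2g_z)\), inverting the limit gives the claim. The inversion is Lipschitz because all the matrices involved have spectrum bounded away from zero.

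I would prove this in three pieces. \emph{(i)} \(\|X_{\mathcal S\mathcal S}\|\to0\). The vertices of \(\mathcal S\) are rare, and each has \(O(L)\) neighbours inside \(\mathcal S\) at most; typically it has \(O(1)\) of them, since its degree is \(O(L^2)\) and the density of \(\mathcal S\) is \(m^{-I(u)}\). Entries are \(1/L\), so a row-sum bound gives norm \(o(1)\) after controlling the maximal number of \(\mathcal S\)-neighbours by a Chernoff estimate. \emph{(ii)} The diagonal of \(X_{\mathcal S T}R^{T}_zX_{T\mathcal S}\), where \(R^T_z=(zI_T-X_{TT})^{-1}\). For \(i\in\mathcal S\) this is \(\sum_{j,l}X_{ij}X_{il}R^T_{jl}\). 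Its leading part is \(\sum_j X_{ij}^2R^T_{jj}\). Here \(\sum_jX_{ij}^2\approx L^{-2}\deg(i)\approx(\alpha+\beta(k-1))/L\cdot L^{-1}\cdot L\to\beta\gamma=\sigma^2\); the within-community contribution is lower order because \(k\to\infty\). Combined with an isotropic local law \(R^T_{jj}\approx g_z\) for the sparse Wigner-type matrix \(X_{TT}\), this gives \(\sigma^2g_z=\tau_z\). \emph{(iii)} The off-diagonal parts, i.e.\ the terms \(j\neq l\) in each diagonal entry and the entries \((i,i')\) with \(i\neq i'\), must vanish in operator norm on \(\mathcal S\).

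The main obstacle is the dependence: \(\mathcal S\) is selected using the rows \(A_{i\cdot}\) themselves, so conditioning on \(\mathcal S\) biases exactly those rows, and \(T\) also depends on the graph. I would handle this by conditioning on the block-degree profile \((d_{i,s})_s\) of each \(i\). Given its block degrees, the row of \(i\) is uniform over subsets with those counts, and the margin condition depends only on the counts. So conditionally on \(\mathcal S\) and the degree vectors, the neighbourhoods are uniform within blocks. Moreover, the low-margin deviation changes block degrees by only \(O(L)\) out of \(\beta L\) per block, so \(\sum_jX_{ij}^2\) still converges to \(\sigma^2\). To decouple \(X_{TT}\) from the rows of \(\mathcal S\), I would pass to a leave-\(\mathcal S\)-out resolvent. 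Since \(\mathcal S\) is determined by rows of \(\mathcal S\) and \(X_{TT}\) depends on \(T\)-\(T\) edges, dependence enters only through which vertices lie in \(T\). I would remove this by comparing with the resolvent of the full matrix with the rows of a deterministic superset removed, via a union bound, or by a resampling argument. Given this, the quadratic forms in (iii) are sums of uniformly sampled random rows against a fixed bounded matrix. Their off-diagonal parts concentrate by Hanson--Wright-type or moment bounds with error \(O(L^{-1/2})\) per entry. The operator norm on \(|\mathcal S|\) coordinates is then controlled by a Gram-matrix bound: distinct rows of \(\mathcal S\) share few neighbours. The total error is \(o(1)\) after union bounding over \(|\mathcal S|\leq n\). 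The local law \(\max_j|R^T_{jj}-g_z|\to0\) for \(z\) outside the spectrum I would derive from the same Schur-complement recursion applied to each \(j\), using \(\|R^T\|=O(1)\) and the concentration of \(\sum_lX_{jl}^2\) around \(\sigma^2\) uniformly in \(j\), which holds since degrees are of order \(L^2\).
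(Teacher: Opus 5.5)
Your reduction is correct as far as it goes: \(\|X\|<z\) gives existence of \(R_z\), block inversion gives \(P_{\mathcal S}R_zP_{\mathcal S}=C_{\mathcal S}^{-1}\), the inversion is Lipschitz since \(\|C_{\mathcal S}^{-1}\|\le\|R_z\|\), and \(g_z^{-1}=z-\tau_z\). But it runs the paper's logic backwards, and it leaves the step you yourself call the main obstacle unresolved. The decoupling you propose does not work as stated. Conditioning on block-degree counts makes row \(i\) uniform within blocks only when you condition on row \(i\)'s own counts. The set \(\mathcal S\), however, is a function of \emph{every} vertex's profile: whether \(j\in T\) is decided by row \(j\), which contains the \(T\)--\(T\) edges and the \(j\)--\(\mathcal S\) edges. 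So the claim that \(\mathcal S\) is determined by the rows of \(\mathcal S\) is false. Conditioning on all profiles gives a uniform graph with a prescribed block-degree matrix. In that graph rows share entries, and adjacency is biased by the endpoints' block degrees, which are exactly the atypical quantities at marked vertices. Hence \(X_{T\mathcal S}\) cannot be treated as uniformly sampled rows independent of \(R^T=(zI_T-X_{TT})^{-1}\). A union bound over candidate sets would need failure probabilities of order \(\binom{n}{|\mathcal S|}^{-1}=\exp(-\Theta(|\mathcal S|L))\), which entrywise Hanson--Wright bounds do not give. The same dependence also undermines your uniform local law for \(R^T_{jj}\).

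Independently of this, step (iii) does not control the operator norm. The matrix is indexed by \(\mathcal S\), which can have polynomially many coordinates (of order \(nm^{-I(u)+o(1)}\)). Entrywise \(O(L^{-1/2})\) bounds plus a union bound control only its largest entry; you need summable rows. The observation that distinct marked rows share few neighbours handles \(X_{\mathcal ST}X_{T\mathcal S}\), but \(R^T\) is far from scalar. Its rows carry order-one off-diagonal \(\ell^2\) mass, with entries of size about \(L^{-d(j,l)}\) along paths plus a dense component coming from \(-\mathbb E A/L\). Consequently \((X_{\mathcal ST}R^TX_{T\mathcal S})_{ii'}\) is nonzero for marked \(i,i'\) at every distance.

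The missing idea is locality, in three parts. First, truncate the resolvent series at a fixed order; the tail is bounded in operator norm because \(\|X\|\) stays below a fixed \(b<z\). Second, show that marked balls of fixed radius are trees and that every such ball contains fewer than a fixed \(J>1/I(u)\) marks. This follows from a first-moment count of connected configurations, using \eqref{eq:joint-margin-tail} conditionally on their edges. This count, rather than a Chernoff bound, is also what your step (i) needs, since the marks of a vertex's neighbours are dependent. Third, dispose of the mean part via \(\mathbb E A/L=UMU^{\mathsf T}-\alpha I/m\) and \(k|\mathcal S|/m\to0\).

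On that single high-probability event everything is deterministic, so no conditioning on \(\mathcal S\) is required. Walk counting then gives \(\|P_{\mathcal S}X^\ell P_{\mathcal S}-\mu_\ell P_{\mathcal S}\|\to0\) directly, and summing the series finishes the proof. This is the paper's argument, and it makes the Schur complement and the local law for \(X_{TT}\) unnecessary. The paper derives the Schur-complement statement \eqref{eq:base-schur-inverse} from the proposition, not the other way round.
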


Here the main challenge is that \(\mathcal S_u\) depends on \(X\)
and can grow with \(m\). The estimate must therefore account
for this dependence and control the restricted matrix in
operator norm, beyond convergence of individual entries.
We begin with degree-margin estimates conditional on finitely
many edges. For a connected configuration with \(v\) vertices,
\(e\) required edges, and \(j\) marked vertices, the expected
number of embeddings is at most
\(n^vp^e m^{-jI(u)+o(1)}\).
For any fixed radius, we adopt these estimates to show that,
with high probability, neighborhoods of marked vertices
contain no cycles. Taking \(j>1/I(u)\) also bounds the number of marked
vertices in every neighborhood of that radius.

On these tree neighborhoods, we count closed walks to recover
the moments of the semicircle law. We also show that walks
between distinct marked vertices make vanishing contributions.
Because only boundedly many marked vertices lie within any
fixed distance of a given vertex, the sum of the absolute
off-diagonal contributions in each row vanishes uniformly.
After accounting for the centering of \(A\), we obtain
operator-norm convergence for each fixed power of \(X\)
restricted to the marked set. We then use the norm bound
on \(X\) to control the remaining terms in the resolvent
series and conclude \eqref{eq:marked-resolvent}.

\subsection{Upper bounds from the dual certificate}

Let
\begin{equation}
  Q=Z^*/m,\qquad \Pi=I-Q,\qquad \mathcal H=\range\Pi.
  \label{eq:block-projections}
\end{equation}
Here the projection \(Q\) averages coordinates within each planted
community, while \(\Pi\) subtracts these averages.
The dual certificate of
\citet{AgarwalBandeiraKoiliarisKolla15} has slack matrix
\(\mathsf S=\Pi(D^*-A)\Pi\), where \(D^*\) is diagonal with
entries \(D_i^*=\Delta_i-\rho_{c(i)}\) and
\(\max_r|\rho_r|=o_{\Prob}(L)\).
Its multiplier \(\Gamma\) is entrywise nonnegative, and every
feasible \(Z\) satisfies
\begin{equation}
  \langle A,Z^*-Z\rangle
  =\langle\mathsf S,Z\rangle+\langle\Gamma,Z\rangle.
  \label{eq:main-slack-identity}
\end{equation}
This identity remains valid when \(\mathsf S\) is not positive
semidefinite. We can therefore use it to bound the error even
when \(Z^*\) is not optimal, following the arguments in 
\citet[Section~1.1]{FeiChenBayes19}.

We restore positive semidefiniteness by increasing the small
diagonal entries. Fix \(b>\tau\) sufficiently close to \(\tau\),
write \(\widehat d_i=D_i^*+p\), and define a diagonal matrix \(F\) by $
  F_{ii}=(bL-\widehat d_i)_+. $
Thus \(\widehat d_i+F_{ii}\geq bL\) for every vertex.
Choose \(z\in(\max\{b,2\sigma\},a)\) such that \(\tau_z<b\).
The marked resolvent estimate gives a lower bound
\((b-\tau_z-o_{\Prob}(1))L\) on the eigenvalues of the
Schur complement. The background block has eigenvalues
at least \((z-2\sigma-o_{\Prob}(1))L\).
Because the linear map used to eliminate the background
has bounded norm, these two bounds imply
$
  \mathsf S+\Pi F\Pi\succeq\kappa L\Pi $
for some fixed \(\kappa>0\), with high probability.

For any feasible \(Z\),  we have 
$
  \|Z-Z^*\|_{1,\mathrm{entry}}=2m\Tr(\Pi Z)$ and $
  \qquad 0\leq(\Pi Z\Pi)_{ii}\leq2$ by the SDP constraints. 
Combining these relations with
\eqref{eq:main-slack-identity} yields
\begin{equation}
  \langle A,Z-Z^*\rangle
  \leq2\Tr F-\kappa L\Tr(\Pi Z).
  \label{eq:repair-inequality}
\end{equation}
The degree-margin tail estimates bound the total diagonal
adjustment by
\(\Tr F\leq nL\,m^{-I(\tau)+\varepsilon}\)
with high probability, provided \(b\) is sufficiently close
to \(\tau\).

Dropping the nonpositive term in
\eqref{eq:repair-inequality} gives the upper bound on the
objective gain. For any feasible \(Z\) with
\(\langle A,Z\rangle\geq V_m-1\), feasibility of \(Z^*\)
also gives \(\langle A,Z-Z^*\rangle\geq-1\). Hence
\[
  \kappa L\Tr(\Pi Z)\leq2\Tr F+1.
\]
Since \(\mathcal E_m(Z)=\Tr(\Pi Z)/n\), this gives the
matrix-error upper bound uniformly over all such \(Z\).

\subsection{A Gram construction for the lower bound}

To obtain a lower bound on the objective gain, we construct
a feasible matrix that improves on~\(Z^*\).
Writing the matrix as the Gram matrix of unit vectors ensures
positive semidefiniteness and unit diagonal. We preserve the
row sums by arranging the perturbation to have the same
coordinate multiset in each community involved in the
construction. A second Gram matrix then corrects any negative
entries. This construction allows a fixed positive step,
so the objective gain retains the order of the leading term.

Choose an even number \(2\leq K\leq k\) of communities,
which we call active, and index them by \(1,\ldots,K\).
Pair these communities through a perfect matching \(\pi\).
For a vector \(y\) centered within each community, let 
$  \mathcal Q(y)=y^{\mathsf T}Ay-
       \sum_{r\ {\rm active}}\sum_{i\in C_r}
       D_{i,\pi(r)}y_i^2$. 
The following lemma converts a positive value of
\(\mathcal Q(y)\) into a feasible objective improvement.

\begin{lemma}[Finite-step Gram improvement]
\label{lem:finite-gram}
Suppose \(y\) vanishes on inactive communities and has
identical coordinate multisets on the active communities.
Each coordinate in an active community is either
\(-\mu<0\) or positive and at least \(\ell>0\).
Assume \(y\) is centered in every community,
\(\max_i y_i^2=O(1)\), \(K\mu/\ell=o(1)\), and
\(\max_{i,s}d_{i,s}=O(L)\).
If \(\mathcal Q(y)\geq\delta L\|y\|_2^2\) for fixed
\(\delta>0\), then there are constants \(t,c>0\),
independent of \(m\), such that, for all sufficiently
large \(m\), a feasible matrix \(Z(t)\) of rank \(O(k)\)
satisfies
\[
  \langle A,Z(t)-Z^*\rangle\geq cL\|y\|_2^2.
\]
\end{lemma}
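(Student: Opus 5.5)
The plan is to build \(Z(t)\) explicitly as the Gram matrix of unit vectors that rotate each planted community indicator slightly toward the direction \(y\), using the matching \(\pi\) to supply a partner direction. Let \(e_r\) be orthonormal vectors indexed by communities. For \(i\in C_r\) with \(r\) active, set
\[
  v_i(t)=\sqrt{1-t^2y_i^2}\,e_r+t y_i\,e_{\pi(r)},
\]
and set \(v_i=e_{c(i)}\) for \(i\) in inactive communities. Then \(Z_0(t)=(\langle v_i,v_j\rangle)\) is positive semidefinite, has unit diagonal, and has rank at most \(k\). Its entries are as follows.
\begin{itemize}
  \item Within an active community \(r\): \(\sqrt{1-t^2y_i^2}\sqrt{1-t^2y_j^2}+t^2y_iy_j\).
  \item Between matched communities \(r\) and \(\pi(r)\): \(ty_i\sqrt{1-t^2y_j^2}+ty_j\sqrt{1-t^2y_i^2}\).
  \item All other pairs: zero.
\end{itemize}
Because \(y\) has the same coordinate multiset on every active community and is centered, every row sum involves the same multiset of values. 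The within-community row sum is therefore \(\sqrt{1-t^2y_i^2}\,S_1+t^2y_i\cdot 0\), where \(S_1=\sum_{j\in C_r}\sqrt{1-t^2y_j^2}\) does not depend on \(r\). The cross row sum is \(ty_iS_1+0\).

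These row sums are not \(m\). The first correction step is therefore to rescale or add a second block of Gram vectors, for example appending coordinates \(\sqrt{1-t^2y_i^2}\,w_r\) with a common correction. I will aim to show that the deviation of the row sums is \(O(t^2\|y\|^2/m)\) per row, and that it can be absorbed by mixing with \(Z^*\) so that the diagonal stays at one. I have not fully worked this step out.

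Next I handle negativity. The cross entries have the sign of \(y_i+y_j\) to first order. Entries with both coordinates negative equal \(-2t\mu\) approximately, and entries with mixed signs are positive because \(\ell\gg K\mu\) forces the positive coordinates to dominate. To fix the negative entries I add a second Gram matrix. For each active pair \((r,\pi(r))\) I append a coordinate \(\sqrt{2t\mu}\,f_{\{r,\pi(r)\}}\) to all vertices of both communities that carry a negative coordinate. Mixed-sign entries do not need it. This lifts every negative entry to nonnegative values. It also raises the diagonal, which I renormalize by shrinking the other components. The cost of this fix is of order \(t\mu\) times the number of affected edges, which is \(O(t\mu KL\cdot\#\{\text{negative coordinates}\})\) using \(\max d_{i,s}=O(L)\). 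Centering gives \(\#\{\text{neg}\}\,\mu\approx\sum_{y_i>0}y_i\leq\|y\|^2/\ell\). The hypothesis \(K\mu/\ell=o(1)\) is designed to make this cost \(o(tL\|y\|^2)\). This is the step I expect to be the main obstacle. The difficulty is keeping the row-sum and diagonal constraints exact while adding the correction, and I would try to make the correction block itself have identical multisets across communities so that the row sums remain uniform.

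Finally I expand the objective in \(t\). Inside active communities the change is \(\sum_{i\sim j}(\sqrt{1-t^2y_i^2}\sqrt{1-t^2y_j^2}-1+t^2y_iy_j)\), which to order \(t^2\) equals \(-\tfrac{t^2}{2}\sum_{i\sim j,\,\mathrm{same}}(y_i^2+y_j^2)+t^2 y^{\mathsf T}A_{\mathrm{within}}y\). The cross terms contribute \(t^2\)-order terms as well once they are paired through the combined rotation, and together these assemble into \(t^2\mathcal Q(y)\), with the diagonal terms \(-\sum_i d_{i,c(i)}y_i^2+\sum_i d_{i,\pi(c(i))}y_i^2=-\sum_i D_{i,\pi(r)}y_i^2\). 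The Taylor remainder is \(O(t^4L\|y\|^2)\) by \(\max y_i^2=O(1)\) and \(d_{i,s}=O(L)\). Hence the gain is at least
\[
  t^2\delta L\|y\|^2-O(t^4L\|y\|^2)-o(L\|y\|^2).
\]
Choosing a fixed small \(t\) then gives the bound with \(c=t^2\delta/2\).
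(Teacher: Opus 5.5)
\textbf{There is a genuine gap, and it lies in the construction itself, not only in the correction steps you flagged.}

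\emph{Row sums.} By your own computation, the row sum at \(i\in C_r\) is \(S_1\bigl(\sqrt{1-t^2y_i^2}+ty_i\bigr)=S_1(1+ty_i+O(t^2))\) with \(S_1=\Theta(m)\). The row sums therefore vary across a community at first order in \(t\), by amounts of order \(tm|y_i|\), not \(O(t^2\|y\|_2^2/m)\). Mixing with \(Z^*\) does not help, since the row sums become \((1-\lambda)S_1(1+ty_i+\cdots)+\lambda m\). Appending coordinates does not help either, since it changes the diagonal.

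\emph{Objective.} The objective also does not assemble into \(\mathcal Q(y)\).
\begin{itemize}
\item Your vectors for unmatched active communities are orthogonal, so edges between such communities contribute nothing. Yet \(y^{\mathsf T}Ay\) counts them, and in the application they are exactly the forest edges, since children are drawn from \(\mathcal N(r)\).
\item The matched cross entries equal \(t(y_i+y_j)+O(t^3)\). They produce a first-order term \(2t\sum_i y_i d_{i,\pi(c(i))}\) with no sign control.
\item At order \(t^2\) only the within-community entries contribute. They give the nonpositive Laplacian form \(-t^2\sum_{c(i)=c(j),\,i<j}A_{ij}(y_i-y_j)^2\).
\end{itemize}
The term \(+\sum_i d_{i,\pi(c(i))}y_i^2\) that you need never appears.

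\textbf{The missing idea} is to keep the coefficients of each vector along \(e_1,\ldots,e_k\) summing to one, and to carry \(y\) on a single direction \(f\) shared by all active communities. If \(\sum_j v_j=m\sum_r e_r\) and \(f\perp e_r\), the row sum at \(i\) is \(m\) times that coefficient sum. The paper takes
\[
v_i^P=\tfrac{1+r_i}2e_{c(i)}+\tfrac{1-r_i}2e_{\pi(c(i))}+\sqrt t\,y_if,\qquad r_i=\sqrt{1-2ty_i^2}.
\]
Centering removes the total \(f\)-coefficient, and the identical multisets give \(\sum_i v_i^P=m\sum_r e_r\). So the unit-diagonal and row-sum constraints hold exactly.

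The matching serves only as a sink for the mass \(\tfrac{1-r_i}2\approx\tfrac t2y_i^2\) removed from the vertex's own community. That transfer is what yields \(-\sum_i D_i^\pi y_i^2\). The entries \(ty_iy_j\) on all active pairs, matched or not, yield \(y^{\mathsf T}Ay\). The gain is therefore \(t\mathcal Q(y)\), first order in \(t\), with an exact remainder \(-\tfrac14\sum_{i,j}(A_\pi)_{ij}(b_i-b_j)^2=O(t^2L\|y\|_2^2)\).

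The only negative entries are \(ty_iy_j\) across unmatched active pairs. They are repaired by mixing with a second Gram matrix \(G\), built from \(\bar e\), which also satisfies all equality constraints exactly. The mixing weight is \(\eta=2(K-1)\mu/\ell=o(1)\); this is where \(K\mu/\ell=o(1)\) enters, at objective cost \(O(\eta tL\|y\|_2^2)\). Appending extra coordinates to the negative vertices, as you propose, would again break the unit diagonal and the row sums.
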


To construct \(Z(t)\), take orthonormal vectors
\(e_1,\ldots,e_k,f\). For vertices in active communities, set
\[
  v_i^P=\frac{1+r_i}{2}e_{c(i)}
       +\frac{1-r_i}{2}e_{\pi(c(i))}
       +\sqrt t\,y_i f,
  \qquad r_i=\sqrt{1-2ty_i^2}.
\]
For vertices in inactive communities, set \(v_i^P=e_{c(i)}\).
These vectors have unit norm. Because \(y\) is centered
within each community and has identical coordinate multisets
across active communities, we have $
  \sum_i v_i^P=m\sum_{r=1}^k e_r$. 
Their Gram matrix \(P\) therefore has unit diagonal and
row sums \(m\). Its potentially negative entries
are \(ty_iy_j\) between  active communities
that are not paired by \(\pi\).

To correct these entries, define
\(\bar e=K^{-1}\sum_{r=1}^K e_r\) and form a second
Gram matrix \(G\) using
\[
  v_i^G=\bar e+c_i(e_{c(i)}-\bar e)+\sqrt t\,y_i f,
  \qquad
  c_i=\sqrt{1-\frac K{K-1}ty_i^2}
\]
on the active communities, keeping the inactive vectors
unchanged. The matrix \(G\) also has unit diagonal and
row sums \(m\). The mixture
$Z(t)=(1-\eta)P+\eta G$, where $\eta=2(K-1)\mu/\ell=o(1)$,
has nonnegative entries and hence satisfies all SDP
constraints. Moreover,
\begin{equation}
  \langle A,Z(t)-Z^*\rangle
  \geq t\mathcal Q(y)-C(t^2+\eta t)L\|y\|_2^2.
  \label{eq:finite-gram-gain}
\end{equation}
Choosing a sufficiently small fixed \(t>0\) controls
the quadratic remainder, while \(\eta=o(1)\) makes the
remaining term negligible. This proves the claimed gain.

Appendix~\ref{sec:gram} verifies the constraints and
bounds the remainder. Local Gram constructions for SDP
values also appear in
\citet{MontanariSen16,FanMontanari17}; here we additionally
enforce the row sums and entrywise nonnegativity required
by \eqref{eq:sdp}.

\subsection{Matching lower bounds from low-margin vertices}

We now construct a vector \(y\) that satisfies
Lemma~\ref{lem:finite-gram} and yields the required
objective gain. Take \(K=2\lfloor k/2\rfloor\) active
communities and pair them as above.
Fix \(x<\tau\) with \(I(x)<1\) and
\(\nu\in(0,1-I(x))\).
With high probability, each active community contains at least
$
  M=\lfloor m^{1-I(x)-\nu}\rfloor
$ 
vertices whose margins against its matched community
are at most \(xL\). We use these vertices as roots.
Conditioning on all edges within communities and between
matched communities fixes the roots while leaving edges
between unmatched active communities independent.

We explore the graph from these roots in groups, with one
root from each active community. Reserve all roots before exploration,
and restrict nonroot vertices to satisfy
\(D_{i,\pi(c(i))}\leq(a+L^{-1/8})L\), as in
Lemma~\ref{lem:neutral-forest}. For each group, we grow
trees to a fixed depth \(R\), selecting
\(\lfloor\beta L-L^{3/4}\rfloor\) fresh neighbors in each
active community other than the current vertex's own
community and its matched community.
If an exploration fails, we discard the group and exclude
all vertices it has used from later explorations.
The total number of reserved or previously used vertices in each community
is at most \(ML^{O(R)}=o(m)\).
Consequently, each group has conditional failure probability
\(o(1)\), uniformly over the preceding explorations.
The expected fraction of failed groups is therefore \(o(1)\),
so Markov's inequality shows that at least \(M/2\) groups
succeed with high probability.

Let \(N\) be the number of retained groups and
\(d_L=(\sigma^2+o(1))L^2\) the branching factor.
The construction gives exactly \(Nd_L^\ell\) vertices
at depth \(\ell\) in every active community.
Assign weight \(d_L^{-\ell/2}w_\ell\) to each such vertex,
set all other coordinates to zero, and then subtract the
mean within each community.
The resulting vector \(y\) satisfies the structural
conditions of Lemma~\ref{lem:finite-gram}, with
\(\|y\|_2^2\asymp kN\).
Its quadratic form satisfies
\begin{equation}
  \frac{\mathcal Q(y)}{KNL}
  \geq-xw_0^2
       +2\sigma\sum_{\ell=1}^R w_{\ell-1}w_\ell
       -a\sum_{\ell=1}^R w_\ell^2-o(1).
  \label{eq:main-path-energy}
\end{equation}

Set \(w_0=1\) and optimize over the remaining weights.
The maximum of the leading expression in
\eqref{eq:main-path-energy} is \(\tau_R-x\), where
$
  \tau_0=0$, 
$  \tau_{j+1}=\frac{\sigma^2}{a-\tau_j}$, and
$ \tau_j\uparrow\tau$. 
Since \(x<\tau\), this value is positive for a sufficiently
large fixed \(R\).
Thus the trees reproduce, to any fixed accuracy, the same
correction \(\tau\) that appears in the upper bound.
Lemma~\ref{lem:finite-gram} then gives a feasible objective
gain of at least \(ckNL\), or \(cN/m\) after normalization.
Taking \(x\) sufficiently close to \(\tau\) and then
\(\nu\) sufficiently small proves the objective-gain
lower bound in Theorem~\ref{thm:error}.

To obtain the matrix-error lower bound, we return to
\eqref{eq:main-slack-identity}.
Since \(\|\mathsf S\|=O(L)\) and \(\Gamma\geq0\),
every feasible \(Z\) satisfies
  $\langle A,Z-Z^*\rangle
  \leq CL\Tr(\Pi Z)
  =CnL\,\mathcal E_m(Z)$.
Every solution with objective at least \(V_m-1\) achieves
the optimal gain up to an additive one.
The objective-gain lower bound therefore yields a
matrix-error lower bound with the same exponent.

The proof of Proposition~\ref{prop:bulk-value} uses a
separate construction.
For the upper bound, nonnegativity and the row constraints
bound the noise terms involving \(Q\) by block-degree
deviations whose total contribution is \(o(nL)\).
For the lower bound, we start from a nonnegative polynomial
Gram matrix, center and rescale it, and add \(J/k\) together with a
positive semidefinite correction that restores the unit diagonal.
Appendix~\ref{sec:bulk} shows that the resulting feasible
matrix has noise value approaching \(2\sigma nL\).

\section{Label recovery beyond tightness}
\label{sec:labels}

Even when the planted matrix is not optimal, the SDP output
can provide an initial labeling accurate enough for exact
recovery. We use the following procedure, which requires
only \(A\) and the known community size \(m\) as input.
First, obtain a feasible solution to \eqref{eq:sdp} with
objective at least \(V_m-1\).
Then cluster its rows: among the remaining rows, choose one
whose \(\ell_1\)-ball of radius \(m/2\) contains the most
remaining rows. Retain this row as a center and remove
all remaining rows within distance~\(m\) of it.
Repeat until \(k\) centers have been selected, then assign
every row to its nearest center.
If no rows remain before \(k\) centers have been selected, return arbitrary labels.
Finally, perform \(\lceil\log_2 n\rceil+2\) refinement steps.
At each step, simultaneously assign every vertex to the
label class containing the largest number of its neighbors,
using the labels from the preceding step.
Break all ties deterministically.

\begin{corollary}[Exact labels beyond matrix tightness]
\label{cor:postprocess}
Under \eqref{eq:k-scaling}--\eqref{eq:model}, if
\(a>2\sigma\) and \(I_{\alpha,\beta}(0)>1\), this procedure
recovers all labels up to permutation with high probability.
The conclusion holds simultaneously for every feasible
input SDP matrix with objective at least \(V_m-1\).
\end{corollary}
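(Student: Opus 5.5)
The proof splits into three stages: a weak matrix-error bound for the SDP input, a row-clustering analysis giving an initial labeling with few errors, and a uniform refinement analysis.

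\emph{Matrix error.} I would use the upper-bound mechanism behind Theorem~\ref{thm:error}, which needs only \(a>2\sigma\) and not \(I(\tau)<1\). Fix \(b>\tau\) close to \(\tau\) and repair the dual slack with \(F_{ii}=(bL-\widehat d_i)_+\). This gives \(\mathsf S+\Pi F\Pi\succeq\kappa L\Pi\) and hence \eqref{eq:repair-inequality}. The degree-margin tails bound \(\Tr F\le nL\,m^{-\min\{I(b),1\}+\varepsilon}\) with high probability; when \(I(b)>1\) this is \(o(nL)\) anyway. Therefore every feasible \(Z\) with \(\langle A,Z\rangle\ge V_m-1\) satisfies \(\mathcal E_m(Z)\le m^{-c_0}\) for some fixed \(c_0>0\). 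All we need downstream is \(\|Z-Z^*\|_{1,\mathrm{entry}}\le nm\cdot m^{-c_0}\) with a polynomial saving, uniformly over the allowed \(Z\).

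\emph{Row clustering.} I would argue deterministically from this bound, as in \citet{FeiChen19}. Call a row \(i\) good if \(\|Z_{i\cdot}-Z^*_{i\cdot}\|_1\le m/8\). By Markov's inequality, at most \(8n m^{-c_0}\) rows are bad. Rows of \(Z^*\) in distinct communities are at \(\ell_1\) distance \(2m\), so good rows of a community lie in a ball of radius \(m/4\) around the planted row, and good rows of different communities are more than \(m\) apart. Each community contains at least \(m-o(m)\) good rows. A ball of radius \(m/2\) centered at any row captures good rows of at most one community together with some bad rows. So at each greedy step the selected ball contains at least \(m-o(m)\) rows, essentially all from one community not yet covered. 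Removing rows within distance \(m\) of the chosen center deletes all good rows of that community and no good rows of other communities. By induction, the \(k\) centers correspond bijectively to communities, and nearest-center assignment labels every good row correctly. The initial labeling therefore has at most \(n m^{-c_0}\) errors, up to permutation.

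\emph{Refinement.} This is the main obstacle, because the labels depend on \(A\). I would prove a uniform deterministic-on-an-event contraction. On an event of probability \(1-o(1)\), for every labeling \(\hat c\) whose error set \(E\) has \(|E|\le n m^{-c_0}\), one majority step yields an error set of size at most \(\max\{|E|/2,\,0\}\), and produces zero errors once \(|E|\) is small. Write the neighbor count of vertex \(i\) into label class \(s\) as \(d_{i,s}\) plus a perturbation bounded by the number of neighbors of \(i\) in \(E\). Vertex \(i\) can be misassigned only if (i) \(\Delta_i\le \epsilon L\), or (ii) \(i\) has at least \(\epsilon L/2\) neighbors in \(E\). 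Since \(I(0)>1\), continuity gives \(I(\epsilon)>1\) for small \(\epsilon\), and the union bound over \(n\) vertices and \(k\) competitors shows that no vertex falls in case (i) with high probability. For case (ii), I would use a uniform sparse-graph edge-count bound: with high probability, every pair of sets \(S,T\) with \(|S|,|T|\le n m^{-c_0}\) satisfies \(e(S,T)\le \max\{C|S|,\ \tfrac{\epsilon L}{8}|S|\ \text{or}\ \text{the like}\}\). This follows from a Chernoff bound combined with a union bound over \(\binom{n}{s}\binom{n}{t}\), using expected density \(|T|L^2/n\cdot\)const \(=o(L)\). Applying it with \(T=E\) and \(S\) the set of case (ii) vertices gives \(|S|\epsilon L/2\le e(S,E)\le C'|E|+o(L)|S|\), so \(|S|\le |E|/2\) for large \(m\). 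This also forces \(|S|<1\) once \(|E|\) is below a constant multiple of \(L\). Halving errors from \(n\) down to zero takes at most \(\lceil\log_2 n\rceil+2\) steps. The induced label permutation is preserved throughout, since the bijection of centers to communities is fixed. Because the event is uniform over all \(E\), reusing the same graph is harmless, and the conclusion holds simultaneously for every admissible input \(Z\).
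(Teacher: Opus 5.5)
Your proposal is correct, and its route differs from the paper's mainly in the refinement step. Stage two is essentially Lemma~\ref{lem:row-rounding}. One slip: the removal step needs the actual separation $7m/4$ between good rows of different communities, not merely ``more than $m$''. That separation gives the $5m/4$ margin from the chosen center. For stage one you invoke the sharp repair of Lemma~\ref{lem:clipped}, which rests on the marked-resolvent estimate. The paper instead uses the coarse bound \eqref{eq:coarse-initialization}: it clips at a level in $(2\sigma,a)$ and uses only the global norm bound. Both are valid, but yours uses more machinery than the corollary needs.

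\emph{Refinement.} The paper proves the single-set bound of Lemma~\ref{lem:small-set}, $e(U)<cL|U|$ for all $|U|\le\eta m$. It applies this to $U=E\cup F$, where $F$ is a set of exactly $\lceil h/2\rceil$ wrongly updated vertices. This needs only $h_0=o(m)$ and yields the recurrence \eqref{eq:refinement-contraction}, which is also reused for Proposition~\ref{prop:label-rate} when $B_d>0$.

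You instead exploit the polynomial saving $|E|\le nm^{-c_0}$ through a bipartite bound on polynomially small sets. This contracts errors by a factor $O(1/L)$ per step instead of $1/2$. The cost is that you need a polynomially small initial error, which the coarse bound supplies anyway.

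\emph{Two fixes.} First, your initial form $e(S,T)\le\max\{C|S|,\epsilon L|S|/8\}$ is false: take $S=\{v\}$ and $T$ the neighbourhood of $v$. The Chernoff and union bound actually give
\[
  e(S,T)\le C(|S|+|T|)\quad\text{for all } |S|,|T|\le nm^{-c_0},
\]
with $C$ depending on $c_0$. Indeed, at $x=C(s+t)$ the probability is at most
\[
  \Bigl(\frac{2e\min\{s,t\}\,p}{C}\Bigr)^{x/2}\le m^{-c_0C(s+t)/4}.
\]
For large $C$ this beats the $n^{s+t}$ choices of $(S,T)$. The $|T|$ term is forced by the union over $T$, not by the expected density, and this is the version you actually use.

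Second, the set of case-(ii) vertices must be truncated to at most $nm^{-c_0}$ vertices before the bound applies, just as the paper fixes $|F|$. The resulting estimate $|S'|\le 4C|E|/(\epsilon L)$ then shows that the truncation was not binding.
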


The proof has two steps. First, row clustering yields
\(o(m)\) mislabeled vertices in total, so the fraction of
errors within every community vanishes.
Second, refinement reduces the number of errors at each
iteration. Under \(I(0)>1\), all true degree margins exceed
\(dL\) with high probability for some fixed \(d>0\).
A vertex mislabeled after an update must therefore have
at least \(dL/2\) neighbors in the preceding error set.
We show that, with high probability, every sufficiently
small vertex set spans too few edges to sustain a large
number of such vertices. This forces the error count to
contract until no errors remain.

The bound holds simultaneously over all sufficiently
small vertex sets, including the error sets produced
by initialization and refinement on the same graph.
Thus the argument requires no independence between these
steps. Appendix~\ref{sec:refinement} gives the full proof
and an additional upper bound on label error with
exponent \(I(0)\).

Row clustering and local refinement are established
methods \citep{FeiChen19,GaoMaZhangZhou17}.
For exact recovery here, a coarser matrix-error bound
suffices: increasing the dual diagonal entries to a level
above \(2\sigma L\) and applying the global norm bound
already yields \(o(m)\) errors after row clustering.
The marked-resolvent estimate is needed to sharpen the
matrix-error bound to the matching exponent \(I(\tau)\).

\section{Numerical illustrations of the asymptotic formulas}
\label{sec:numerics}

Figure~\ref{fig:phase-diagram} illustrates the theoretical
regimes by evaluating their defining formulas.
Along the cut \((\alpha,\beta)=(4.25,1)\),
\(I(0)\approx1.1269>1\), so exact label recovery is
information-theoretically possible for every fixed
\(\gamma>0\).
The tightness boundary occurs at
\(\gamma\approx0.55515\), and the bulk boundary at
\(\gamma=a^2/(4\beta)=2.640625\).
The table shows representative values in the three regimes.

\begin{center}
\begin{tabular}{@{}cccc@{}}
\toprule
\(\gamma\) & \(I(\tau)\) & Limit of \(V_m/(nL)\)
  & SDP matrix behavior\\
\midrule
0.3 & 1.0592 & 4.2500 & Tight\\
1.0 & 0.8923 & 4.2500 & Nonoptimal \(Z^*\), vanishing error\\
2.4 & 0.4613 & 4.2500 & Nonoptimal \(Z^*\), vanishing error\\
3.0 & --- & 4.4641 & Overlap tends to zero\\
\bottomrule
\end{tabular}
\end{center}

At \(\gamma=1\), both the matrix error and the normalized
objective gain have exponent approximately \(0.8923\),
yet the postprocessing procedure in
Corollary~\ref{cor:postprocess} recovers all labels.
At \(\gamma=3\), the normalized objective gain instead
converges to approximately \(0.2141\), and the planted
overlap tends to zero. Here \(\tau\) is not real, so the
local error exponent does not apply.

The accompanying code evaluates these formulas and
locates the boundaries by scalar root finding, with
residual checks. These are illustrations of the asymptotic
results; we do not report finite-sample SDP experiments.
Such experiments would require measuring matrix error,
objective gain, and label error on the same graphs,
with optimization and feasibility errors controlled
below the scales being measured.

\section{Discussion}
\label{sec:discussion}

Our results distinguish the loss of matrix tightness from
the loss of information needed for label recovery.
At the local tightness boundary, vertices with small degree
margins begin to change the SDP optimum. Their frequency
determines the matrix-error exponent, yet postprocessing
still recovers all labels whenever \(a>2\sigma\) and
\(I(0)>1\).
In the bulk regime \(a<2\sigma\), the unnormalized objective gain
is of order \(nL\), and the normalized planted overlap
tends to zero.

The analysis applies to the balanced SDP in \eqref{eq:sdp}
at fixed \(\alpha,\beta,\gamma\).
The remaining critical cases, multiplicative error
prefactors, and finite-sample behavior remain open.
Whether the SDP output permits label recovery in the
bulk regime is also unresolved by our overlap result.
The bulk boundary describes this particular relaxation
and does not establish a computational lower bound
for community recovery.
The growing-community KS results of
\citet{ChinMosselSohnWein26} concern different priors
and recovery criteria; Appendix~\ref{app:comparison}
details the comparison.
\par
\label{end:main-text}

%% file: figures/mechanism.tex
\begin{tikzpicture}[
  font=\small,
  box/.style={draw=#1,rounded corners=3pt,fill=#1!6,
    align=center,inner sep=6pt,line width=.65pt},
  arr/.style={-{Stealth[length=2mm]},line width=.7pt,draw=black!60}
]
\definecolor{proofblue}{HTML}{245887}
\definecolor{prooforange}{HTML}{B46B20}
\definecolor{proofgreen}{HTML}{24745E}
\node[box=proofblue,text width=4.8cm] (response) at (0,0)
  {Eliminate the surrounding graph\\
   effective margin: \(\Delta_i-\tau L\)};
\node[box=proofgreen,text width=4.3cm] (upper) at (-2.75,-1.65)
  {Raise the deficient diagonals\\
   total cost \(\leq nL\,m^{-I(\tau)+\varepsilon}\)};
\node[box=prooforange,text width=4.3cm] (lower) at (2.75,-1.65)
  {Construct a Gram matrix\\
   feasible gain \(\geq nL\,m^{-I(\tau)-\varepsilon}\)};
\node[box=proofblue,text width=7.4cm] (result) at (0,-3.45)
  {Matrix error and objective gain have matching exponent\\
   \(I(\tau)\) in the local non-tight region};
\draw[arr] (response.south) -- ++(0,-.23) -| (upper.north);
\draw[arr] (response.south) -- ++(0,-.23) -| (lower.north);
\draw[arr] (upper.south) -- (upper.south |- result.north);
\draw[arr] (lower.south) -- (lower.south |- result.north);
\end{tikzpicture}

%% file: appendix.tex
\input{appendices/probability}
\input{appendices/upper}
\input{appendices/error-upper}
\input{appendices/gram}
\input{appendices/forests}
\input{appendices/bulk}
\FloatBarrier
\input{appendices/refinement}
\input{appendices/consequences}
\input{appendices/comparison}

%% file: appendices/probability.tex
\section{Probability and random-matrix inputs}
\label{sec:inputs}

We begin with lower-tail estimates for the degree margins. We use these
estimates to count low-margin vertices for the lower bound and to control
their joint occurrence in the resolvent proof. The estimates remain valid
after conditioning on a fixed number of edges. We then give the spectral
norm and degree bounds used in both arguments.

\begin{lemma}[Degree-margin deviations]
\label{lem:margin-ldp}
For every fixed \(x<a\), uniformly over \(i\in C_r\) and \(s\ne r\),
\begin{equation}
  \Prob(D_{i,s}\leq xL)=m^{-I_{\alpha,\beta}(x)+o(1)}.
  \label{eq:margin-tail}
\end{equation}
For each fixed positive integer \(j\), uniformly over distinct vertices
\(i_1,\ldots,i_j\),
\begin{equation}
  \Prob\{\Delta_{i_\nu}\leq xL\text{ for all }1\leq\nu\leq j\}
  \leq m^{-jI_{\alpha,\beta}(x)+o(1)}.
  \label{eq:joint-margin-tail}
\end{equation}
Both estimates hold uniformly conditional on any fixed number of
specified edge values.
\end{lemma}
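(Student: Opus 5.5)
Both estimates follow from an exponential-tilting (Chernoff/Cramér) argument for the difference of two independent binomials. For \(i\in C_r\) and \(s\ne r\), \(D_{i,s}=d_{i,r}-d_{i,s}\), where \(d_{i,r}\sim\mathrm{Bin}(m-1,p)\) and \(d_{i,s}\sim\mathrm{Bin}(m,q)\) are independent. For \(\lambda\ge0\) we have
\[
\E e^{-\lambda D_{i,s}}=(1-p+pe^{-\lambda})^{m-1}(1-q+qe^{\lambda})^{m}
=\exp\{L(\alpha(e^{-\lambda}-1)+\beta(e^{\lambda}-1))+O(L^2/m)\}.
\]
Markov's inequality then gives \(\Prob(D_{i,s}\le xL)\le\exp\{-L\sup_{\lambda\ge0}[-\lambda x-\alpha(e^{-\lambda}-1)-\beta(e^\lambda-1)]+o(L)\}\). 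The supremum is attained at \(e^{\lambda}=(x+\sqrt{x^2+4\alpha\beta})/(2\beta)\), which is \(\ge1\) exactly when \(x\le a\). Substituting this value yields \(I_{\alpha,\beta}(x)\), after using \(\alpha e^{-\lambda}=\beta e^{\lambda}-x\) and the identity \(\beta e^{\lambda}\cdot\alpha e^{-\lambda}=\alpha\beta\).

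For the matching lower bound, the plan is a direct local computation rather than a tilted-measure argument. Both variables are Poisson-like with means \(\alpha L,\beta L\). Fix integers \(u\approx \alpha e^{-\lambda}L\) and \(w\approx\beta e^{\lambda}L\) with \(u-w\le xL\). Stirling's formula gives \(\Prob(d_{i,r}=u)\Prob(d_{i,s}=w)=m^{-I(x)+o(1)}\), since the binomial point masses equal the Poisson ones up to \(e^{o(L)}\) when \(u,w=O(L)\). Uniformity over \(i,s\) is immediate because the distributions depend only on \(m\). The only care needed is the \(m-1\) versus \(m\) distinction, which is absorbed into \(o(1)\).

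For the joint bound, the obstacle is that \(\Delta_{i_1},\dots,\Delta_{i_j}\) are dependent, since they share the edges \(A_{i_\nu i_\mu}\). The plan is to condition on the at most \(\binom j2\) edges among \(\{i_1,\dots,i_j\}\), together with any fixed finite set of specified edges. Let \(\tilde D_{i,s}\) be the margin computed using only edges from \(i\) to vertices outside this finite exceptional set. Then \(|D_{i,s}-\tilde D_{i,s}|\le c\) for a constant \(c\) depending only on the number of conditioned edges. The modified margins \(\tilde D_{i_\nu,s}\) for different \(\nu\) depend on disjoint edge sets, so they are independent. Consequently,
\[
\Prob\Bigl\{\textstyle\bigcap_\nu\{\Delta_{i_\nu}\le xL\}\Bigm|\text{edges}\Bigr\}\le\prod_\nu\Prob\{\tilde\Delta_{i_\nu}\le xL+c\}.
\]
The union bound gives \(\Prob(\tilde\Delta_i\le x'L)\le(k-1)\max_s\Prob(\tilde D_{i,s}\le x'L)\). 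Since \(k=O(L)=m^{o(1)}\), this factor is harmless. The Chernoff bound above is robust to removing \(O(1)\) summands and to shifting the threshold by \(c=o(L)\), and \(I\) is continuous, so each factor is \(m^{-I(x)+o(1)}\). The same argument gives the conditional version of the single-vertex estimate: fixing finitely many edges changes \(D_{i,s}\) by \(O(1)\), and the two-sided estimate is stable under such shifts because \(I\) is continuous at \(x<a\).

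The main step needing care is the uniformity of the \(o(1)\) terms. These terms must not depend on the conditioned edges or on the choice of vertices, and the \(k\)-fold union bound must be absorbed. Both are handled by the deterministic bound \(|D-\tilde D|\le c\) and by the fact that \(\log k/\log m\to0\).
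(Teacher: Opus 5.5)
Your argument works. The upper bound and the joint bound follow the paper's route. The Chernoff bound uses the same cumulant limit. You then condition on the edges among \(i_1,\dots,i_j\) and the specified edges, so that the remaining incident edges are independent across vertices, and the union over competing communities costs only \(k^j=m^{o(1)}\).

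The genuine difference is the matching lower bound in \eqref{eq:margin-tail}. The paper tilts by \(\theta_x\) and applies a central limit theorem under the tilted law to give the window \([xL-C\sqrt L,xL]\) positive probability. Changing measure back loses a factor \(e^{O(\sqrt L)}\). You instead bound \(\Prob(D_{i,s}\le xL)\) below by one product of point masses \(\Prob(d_{i,r}=u)\Prob(d_{i,s}=w)\) at the tilted means, using the binomial-to-Poisson comparison for \(u,w=O(L)\) and Stirling's formula. This is more elementary, since it needs no triangular-array CLT, and it is sharper, losing only \(e^{O(\log L)}\). The tilting route is more robust: it needs no explicit local formula and extends to heterogeneous summands. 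Either suffices, since only \(e^{o(L)}\) precision is needed.

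One correction: your optimizer has the sign of \(x\) flipped. The first-order condition for \(-\lambda x-\alpha(e^{-\lambda}-1)-\beta(e^{\lambda}-1)\) is \(\alpha e^{-\lambda}=\beta e^{\lambda}+x\). So \(e^{\lambda}=(\sqrt{x^2+4\alpha\beta}-x)/(2\beta)=2\alpha/(x+\sqrt{x^2+4\alpha\beta})\), that is, \(\lambda=-\theta_x\). This is the value for which \(e^{\lambda}\ge1\) holds exactly when \(x\le a\).

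Taken literally, your formula makes \(u-w=-xL\). For \(x<0\) this pair violates \(u-w\le xL\). For \(x>0\) the point masses give exponent \(I(-x)>I(x)\), so the bounds do not match. Substituting your \(\lambda\) into the Chernoff exponent also does not return \(I(x)\). With the corrected \(\lambda\), you get \(u-w=xL\) up to rounding, and the Stirling computation gives exactly \(-LI(x)+O(\log L)\).
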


\begin{proof}
For a fixed vertex and competing community, \(D_{i,s}\) has the law of
\(\operatorname{Bin}(m-1,p)-\operatorname{Bin}(m,q)\), with independent
terms.  For fixed \(\theta\leq0\),
\[
  \frac1L\log\E e^{\theta D_{i,s}}
  =\alpha(e^\theta-1)+\beta(e^{-\theta}-1)+o(1).
\]
The optimizing parameter for the lower tail is
\[
  \theta_x=\log\frac{x+\sqrt{x^2+4\alpha\beta}}{2\alpha}<0.
\]
Chernoff's bound gives the upper bound in \eqref{eq:margin-tail}.
For the lower bound, consider the exponentially tilted distribution with
parameter \(\theta_x\). Under this law, the mean is \(xL+o(1)\)
and the variance is
\(L\sqrt{x^2+4\alpha\beta}+o(L)\).
The central limit theorem for the bounded independent summands gives
a positive limiting probability to the interval
\([xL-C\sqrt L,xL]\) for fixed \(C>0\).
Changing back to the original law gives a lower bound of the form
\(\exp\{-LI(x)+O(\sqrt L)+o(L)\}\), as required.

Conditioning on a bounded number of edge values removes only \(O(1)\)
random summands and shifts the threshold by \(O(1)\). The preceding
estimates are uniform under these changes. For the joint bound, first fix
the competing community for each vertex, and then condition on all edges
among \(i_1,\ldots,i_j\). The remaining incident edges are independent
across these vertices, so the conditional probability is bounded by the
product of the individual bounds. A union over the competing communities
adds a factor of at most \(k^j=L^{O(1)}\).
\end{proof}

\begin{lemma}[Low-margin counts]
\label{lem:margin-counts}
For fixed \(x<a\) with \(I_{\alpha,\beta}(x)<1\), let
\[
  N_{rs}(x)=\sum_{i\in C_r}\one_{\{D_{i,s}\leq xL\}},
  \qquad
  \nu_m(x)=\Prob(D_{i,s}\leq xL)
  \quad(r\ne s).
\]
Then
\begin{equation}
  \max_{r\ne s}\left|\frac{N_{rs}(x)}{m\nu_m(x)}-1\right|
  \xrightarrow{\Prob}0.
  \label{eq:uniform-margin-counts}
\end{equation}
In particular, uniformly over the ordered block pairs,
\(N_{rs}(x)=m^{1-I_{\alpha,\beta}(x)+o_{\Prob}(1)}\).
\end{lemma}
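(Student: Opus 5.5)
The plan is a second-moment computation for each fixed ordered pair \((r,s)\), followed by a union bound over the \(k(k-1)=O(L^2)\) pairs. The union bound is affordable because the relative variance is polynomially small in \(m\).

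Fix \(r\ne s\) and write \(N=N_{rs}(x)=\sum_{i\in C_r}\xi_i\), where \(\xi_i=\one_{\{D_{i,s}\leq xL\}}\). The variables \(D_{i,s}\) for \(i\in C_r\) all have the same law, so by Lemma~\ref{lem:margin-ldp} we have \(\E N=m\nu_m(x)\), with \(\nu_m(x)=m^{-I(x)+o(1)}\). Since \(I(x)<1\), this gives \(\E N=m^{1-I(x)+o(1)}\to\infty\) polynomially.

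For the variance, note that distinct \(i,i'\in C_r\) share exactly one edge variable, namely \(A_{ii'}\), which enters \(d_{i,r}\) and \(d_{i',r}\). The edges from \(i\) and from \(i'\) into \(C_s\) are disjoint sets. Conditional on \(A_{ii'}\), the margins \(D_{i,s}\) and \(D_{i',s}\) are therefore independent. This gives
\[
  \Prob(\xi_i=\xi_{i'}=1)=\E\bigl[\Prob(\xi_i=1\mid A_{ii'})\,\Prob(\xi_{i'}=1\mid A_{ii'})\bigr].
\]
Write \(\nu^{(0)}\) and \(\nu^{(1)}\) for the conditional probabilities given \(A_{ii'}=0\) and \(A_{ii'}=1\). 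Given \(A_{ii'}=0\), the event \(D_{i,s}\leq xL\) is a Binomial\((m-2,p)-\)Binomial\((m,q)\) event, and \(\nu=(1-p)\nu^{(0)}+p\nu^{(1)}\). A direct comparison shows \(\nu^{(1)}\leq\nu^{(0)}\). Indeed, adding the edge raises \(d_{i,r}\) by one and so only makes the lower tail less likely, giving \(\nu^{(1)}\leq\nu\) and \(\nu^{(0)}\leq\nu/(1-p)\). Hence
\[
  \mathrm{Cov}(\xi_i,\xi_{i'})\leq\nu^2\bigl((1-p)^{-1}-1\bigr)+p\,\nu^2=O(p\,\nu^2).
\]
The crude bound \(O(p\nu)\) from Lemma~\ref{lem:margin-ldp} under conditioning would also suffice. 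Either way,
\[
  \mathrm{Var}\,N\leq m\nu+m^2\cdot O(p\nu^2)=\E N+O(L)(\E N)^2/m .
\]
The relative variance is therefore \(\mathrm{Var}N/(\E N)^2\leq 1/\E N+O(L/m)=m^{-(1-I(x))+o(1)}\).

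To conclude, fix \(\epsilon>0\). Chebyshev's inequality gives \(\Prob(|N/\E N-1|>\epsilon)\leq\epsilon^{-2}m^{-(1-I(x))+o(1)}\). A union bound over the \(O(L^2)=m^{o(1)}\) ordered pairs still tends to zero, which proves \eqref{eq:uniform-margin-counts}. The statement \(N_{rs}(x)=m^{1-I(x)+o_{\Prob}(1)}\) then follows from \(\nu_m(x)=m^{-I(x)+o(1)}\).

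The only delicate point is the covariance bound. It rests on the exact dependence structure: one shared edge per pair, and disjoint neighbourhoods in \(C_s\). It also needs uniformity in \(m\), which the conditioning clause of Lemma~\ref{lem:margin-ldp} or the explicit monotone comparison above supplies.
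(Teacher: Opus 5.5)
Your proposal is correct and follows the paper's proof essentially step for step. Both use conditional independence given the single shared edge \(A_{ii'}\), the monotone comparison \(\nu^{(1)}\leq\nu_m\leq\nu^{(0)}\leq\nu_m/(1-p)\) giving a covariance of order \(p\,\nu_m^2\), the relative-variance bound \(1/(m\nu_m)+O(L/m)\), and Chebyshev with a union bound over the \(O(L^2)\) ordered pairs; the paper writes the covariance exactly as \(p(1-p)(\nu^{(0)}-\nu^{(1)})^2\leq p\,\nu_m^2/(1-p)\), which is only a slightly tighter form of your estimate.
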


\begin{proof}
For distinct \(i,j\in C_r\), the two margin events share only the
within-block edge \(A_{ij}\). Conditional on that edge, the events are
independent. Let \(u_0\geq u_1\) be their common conditional probabilities
when the edge is absent or present, respectively. Since
\(\nu_m=(1-p)u_0+pu_1\),
\[
  \operatorname{Cov}
    (\one_{\{D_{i,s}\leq xL\}},\one_{\{D_{j,s}\leq xL\}})
  =p(1-p)(u_0-u_1)^2
  \leq\frac{p}{1-p}\nu_m^2.
\]
Consequently,
\begin{equation}
  \frac{\operatorname{Var}N_{rs}(x)}{(m\nu_m(x))^2}
  \leq\frac1{m\nu_m(x)}+\frac p{1-p}.
  \label{eq:margin-count-variance}
\end{equation}
By Lemma~\ref{lem:margin-ldp}, the right-hand side is
\(m^{-(1-I(x))+o(1)}+O(L/m)\).
Chebyshev's inequality and a union bound over \(k(k-1)=O(L^2)\)
ordered pairs prove \eqref{eq:uniform-margin-counts}.
\end{proof}

We next bound the norm of the centered adjacency matrix. Set
\begin{equation}
  W=A-\E A,\qquad X=W/L.
  \label{eq:centered-matrix}
\end{equation} 

\begin{lemma}[Centered norm and degree bounds]
\label{lem:spectral-edge}
Under \eqref{eq:k-scaling}--\eqref{eq:model},
\begin{equation}
  \|X\|\leq2\sigma+o_{\Prob}(1).
  \label{eq:norm-edge}
\end{equation}
Moreover, with probability tending to one, uniformly over vertices,
\begin{equation}
  \deg(i)=(\sigma^2+o(1))L^2,
  \label{eq:uniform-degrees}
\end{equation}
and uniformly over vertex--community pairs and block pairs,
\begin{equation}
  d_{i,s}=O(L),\qquad
  \frac1m\sum_{i\in C_r,j\in C_s}A_{ij}=O(L).
  \label{eq:uniform-block-bounds}
\end{equation}
\end{lemma}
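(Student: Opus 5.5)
The plan is to prove the norm bound \eqref{eq:norm-edge} by invoking the sharp sparse-matrix bound of \citet{BenaychGeorgesBordenaveKnowles20}. The degree and block bounds then follow from Bernstein/Chernoff inequalities and union bounds.

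For the norm, $W$ is a symmetric matrix with independent centered entries above the diagonal. Each entry is bounded by one, and its variance is at most $p$ within communities and $q(1-q)$ across them. The relevant variance profile is the maximal row variance
\[
  \max_i\sum_j\E W_{ij}^2=(m-1)p(1-p)+(n-m)q(1-q)=\alpha L+\beta(k-1)L+o(L^2).
\]
Since $k/L\to\gamma$, this equals $(\beta\gamma+o(1))L^2=(\sigma^2+o(1))L^2$. The maximal entry variance is $O(L/m)$, and the expected degree is of order $L^2\gg\log n$. The result of Benaych-Georges--Bordenave--Knowles gives $\E\|W\|\leq 2\sqrt{\max_i\sum_j\E W_{ij}^2}\,(1+o(1))+C\sqrt{\log n}\cdot\max|W_{ij}|$-type corrections. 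Here $\sqrt{\log n}=O(\sqrt L)=o(L)$, so the correction term is negligible. Concentration of $\|W\|$ follows from Talagrand's inequality for convex Lipschitz functions of bounded independent entries, with fluctuations $O_{\Prob}(1)$. Dividing by $L$ yields $\|X\|\leq2\sigma+o_{\Prob}(1)$.

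The main point to check is that the hypotheses of that theorem hold in the inhomogeneous block setting. Their bound is stated for general variance profiles, with the leading constant $2\sigma_{\max}$ valid when the degree scale $d\gg\log n$. Our scale is $d\asymp L^2$ against $\log n\asymp L$, which is comfortable. If only the version with $2\sigma(1+\epsilon)+C\sqrt{\log n/\log(d/\log n)}$ is available, the second term is still $O(\sqrt L)=o(L)$, so the conclusion is unaffected.

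For \eqref{eq:uniform-degrees}, $\deg(i)$ is a sum of independent Bernoullis with mean $\alpha L+\beta(k-1)L=(\beta\gamma+o(1))L^2$. Chernoff's bound gives
\[
  \Prob\bigl(|\deg(i)-\E\deg(i)|>L^{7/4}\bigr)\leq 2\exp(-cL^{3/2}).
\]
This is $o(1/n)$ because $n=km=m^{1+o(1)}$, so a union bound over the $n$ vertices proves the claim.

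For \eqref{eq:uniform-block-bounds}, $d_{i,s}$ is binomial with mean at most $\alpha L$. Chernoff gives
\[
  \Prob(d_{i,s}>CL)\leq\exp\bigl(-L\,[C\log(C/(e\alpha))]\bigr)\leq m^{-3}
\]
for a large fixed $C$. A union over the $nk=m^{1+o(1)}$ pairs $(i,s)$ then suffices.

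The block sums satisfy $\frac1m\sum_{i\in C_r,j\in C_s}A_{ij}\leq\max_{i\in C_r}d_{i,s}$, so they are also $O(L)$ uniformly. They could alternatively be bounded directly by Chernoff, since their mean is $O(mL)$ with exponentially small deviations.
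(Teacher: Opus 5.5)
Your proposal is correct and follows essentially the same route as the paper: normalize by the common row-variance sum $v_m=(\sigma^2+o(1))L^2$, apply the Benaych-Georges--Bordenave--Knowles bound using $\sqrt{\log n}/\sqrt{v_m}\to0$, then use Chernoff bounds with union bounds for the total degrees and the $d_{i,s}$, and average within communities for the block sums. The only cosmetic difference is that you get concentration of $\|W\|$ directly from Talagrand's inequality, where the paper cites the concentration estimate (2.4) of that same reference; the two give the same conclusion.
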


\begin{proof}
The common row-variance sum is
\[
  v_m=(m-1)p(1-p)+(k-1)mq(1-q)
      =(\sigma^2+o(1))L^2.
\]
Apply \citet[Assumption~2.4 and Theorem~2.7]{BenaychGeorgesBordenaveKnowles20}
to \(\widehat W=W/\sqrt{v_m}\).  Its row-variance sums equal one,
its entry variances are at most \(\kappa/n\) for a fixed
\(\kappa\geq1\), and its entries have magnitude at most
\(1/q_{\mathrm{sp}}\), where \(q_{\mathrm{sp}}=\sqrt{v_m}=\Theta(L)\).
The required inequality
\(1\vee q_{\mathrm{sp}}\leq n^{1/10}\kappa^{-1/9}\) holds for large
\(m\), and \(\sqrt{\log n}/q_{\mathrm{sp}}=o(1)\).
The theorem therefore gives \(\E\|\widehat W\|\leq2+o(1)\).
The concentration inequality in \citet[equation~(2.4)]{BenaychGeorgesBordenaveKnowles20}
gives, for every \(u\geq0\),
\[
  \Prob\{|\|\widehat W\|-\E\|\widehat W\||\geq u\}
  \leq2\exp(-c q_{\mathrm{sp}}^2u^2).
\]
Taking \(u=L^{-1/4}\) and using \(\sqrt{v_m}/L\to\sigma\)
proves \eqref{eq:norm-edge}.

The total degree has mean \((\sigma^2+o(1))L^2\).
A Chernoff bound with relative error \(L^{-1/8}\), followed by a
union bound over \(n\) vertices, proves \eqref{eq:uniform-degrees}.
A Chernoff bound for each \(d_{i,s}\), followed by a union over the
\(nk\) vertex--community pairs, bounds all such degrees by a sufficiently
large fixed multiple of \(L\). Averaging within each community gives
the block-pair bound.
\end{proof}

%% file: appendices/upper.tex
\section{Dual analysis and upper bounds}
\label{sec:success}

The dual certificate reduces unique optimality to positivity on the
space \(\cH\) of vectors centered within each community. When \(a>2\sigma\), most of the
diagonal entries exceed the noise norm. We isolate the smaller entries
and use the marked resolvent to account for their interaction with the
rest of the graph.

We use the candidate dual diagonal from
\citet[Section~3]{AgarwalBandeiraKoiliarisKolla15}.
Write \(E_{rs}=\sum_{i\in C_r,j\in C_s}A_{ij}\), and set
\begin{equation}
  M_i=\max_{s\ne c(i)}d_{i,s},\qquad
  \overline M_r=\frac1m\sum_{i\in C_r}M_i,\qquad
  c_0=\min_{r,s}\frac{E_{rs}}m.
  \label{eq:dual-degrees}
\end{equation}
Define the block corrections and the candidate diagonal by
\begin{equation}
  \rho_r=\overline M_r-c_0,\qquad
  D_i^*=\Delta_i-\rho_{c(i)}
       =d_{i,c(i)}-M_i-\overline M_{c(i)}+c_0,
  \qquad D^*=\diag(D_i^*).
  \label{eq:dual-diagonal}
\end{equation}
The minimum defining \(c_0\) includes diagonal block pairs.
Lemma~\ref{lem:block-correction} shows that it is attained across
distinct blocks with high probability.

\begin{lemma}[Dual reduction]
\label{lem:dual-reduction}
If \(x^{\mathsf T}(D^*-A)x>0\) for every nonzero \(x\in\cH\),
then \(Z^*\) is the unique optimal solution of \eqref{eq:sdp}.
\end{lemma}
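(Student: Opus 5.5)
The plan is the standard Lagrangian duality argument for the SDP in the form of \citet{AgarwalBandeiraKoiliarisKolla15}. We will construct dual variables and show that every feasible \(Z\) satisfies
\[
  \langle A,Z^*-Z\rangle=\langle \mathsf S,Z\rangle+\langle\Gamma,Z\rangle,
\]
with \(\mathsf S=\Pi(D^*-A)\Pi\) and \(\Gamma\geq0\). We will then use the hypothesis to force equality only at \(Z=Z^*\). The dual variables are the diagonal \(D^*\), for the constraint \(\diag Z=\one\); a vector \(\lambda\), for \(Z\one=m\one\), entering through the symmetric term \(\lambda\one^{\mathsf T}+\one\lambda^{\mathsf T}\); and the multiplier \(\Gamma\), for \(Z\geq0\). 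We choose \(\Gamma\) to vanish on the diagonal blocks, so that \(\langle\Gamma,Z^*\rangle=0\). The PSD multiplier is the slack
\[
  \mathsf S=D^*+\lambda\one^{\mathsf T}+\one\lambda^{\mathsf T}-A-\Gamma.
\]
The identity then follows by pairing \(\mathsf S\) with \(Z-Z^*\) and using the feasibility of both matrices.

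First I will fix \(\lambda\) and \(\Gamma\) so that the slack annihilates the block indicators, \(\mathsf S\one_{C_r}=0\) for every \(r\). This makes \(\mathsf S=\Pi\mathsf S\Pi\), and \(\mathsf S\) agrees with \(\Pi(D^*-A)\Pi\) because the rank-one and block-constant pieces are killed by \(\Pi\). On diagonal blocks the condition \(\mathsf S\one_{C_r}=0\) reads \(D^*_i+m\lambda_i+\one^{\mathsf T}\lambda_{C_r}-d_{i,r}=0\) for \(i\in C_r\). Summing over \(i\in C_r\) determines the block sum of \(\lambda\), and then each \(\lambda_i\) follows.

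On off-diagonal blocks \((r,s)\), I will set
\[
  \Gamma_{ij}=\lambda_i+\lambda_j-A_{ij}+(\text{terms making row and column sums match}),
\]
for example \(\Gamma_{ij}=\frac1m(u_{i,s}+u_{j,r})-\frac{1}{m^2}(\cdot)\), where \(u_{i,s}=m\lambda_i-d_{i,s}\) plus the block-average correction. This is the choice in \citet[Section~3]{AgarwalBandeiraKoiliarisKolla15}.

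The main obstacle is checking \(\Gamma\geq0\) entrywise. The specific diagonal \(D^*_i=d_{i,c(i)}-M_i-\overline M_{c(i)}+c_0\) is designed exactly so that \(m\lambda_i\) dominates \(M_i=\max_s d_{i,s}\) up to block averages. Nonnegativity then reduces to \(E_{rs}/m\geq c_0\), which holds by the definition of \(c_0\) as a minimum. I would verify this computation directly by writing \(\Gamma_{ij}\) as a sum of the form \((M_i-d_{i,s})/m+(M_j-d_{j,r})/m+(E_{rs}/m-c_0)/m\), possibly after adding a rank-one piece supported on the block to correct the row sums. The fact that \(\Gamma\) is not required to be PSD gives the needed freedom.

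With the identity in hand, uniqueness follows. Under the hypothesis, \(\mathsf S\succeq0\), so \(\langle\mathsf S,Z\rangle\geq0\) and \(\langle\Gamma,Z\rangle\geq0\) for feasible \(Z\), which makes \(Z^*\) optimal. If \(Z\) is also optimal, then \(\langle\mathsf S,Z\rangle=0\), so \(\mathsf S Z=0\) and \(\range Z\subseteq\ker\mathsf S\). The hypothesis that \(\mathsf S\) is positive definite on \(\cH\) gives \(\ker\mathsf S=\range Q\), the block-constant vectors. Hence \(Z=QZQ\) is block-constant. The unit diagonal forces the diagonal blocks to equal \(J\). The row sums \(m\), together with \(Z\geq0\), then force the off-diagonal blocks to be zero, so \(Z=Z^*\).
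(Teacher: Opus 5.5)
Your proposal is correct and matches the paper's proof: your diagonal-block equation forces \(\lambda_i=(M_i-c_0/2)/m\), and your final cross-block formula \(\Gamma_{ij}=\bigl[(M_i-d_{i,s})+(M_j-d_{j,r})+(E_{rs}/m-c_0)\bigr]/m\) is exactly the paper's multiplier (it already annihilates the indicators with no extra rank-one correction, and its row-plus-column structure gives \(\Pi\Gamma\Pi=0\)); the slack identity and kernel argument for uniqueness are also the same. Discard the tentative expression \(\lambda_i+\lambda_j-A_{ij}+\cdots\), which is negative on edges and is not killed by \(\Pi\); nonnegativity comes from \(M_i\geq d_{i,s}\) and \(E_{rs}/m\geq c_0\), not from \(m\lambda_i\) dominating \(M_i\).
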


\begin{proof}
We display the certificate of
\citet[Section~3 and Appendix~A.6]{AgarwalBandeiraKoiliarisKolla15}
in our notation.  Set \(v_i=(M_i-c_0/2)/m\) and
\begin{equation}
  \Gamma_{ij}=
  \begin{cases}
    0,&c(i)=c(j),\\[2pt]
    \displaystyle
    \frac{M_i-d_{i,c(j)}+M_j-d_{j,c(i)}
      +E_{c(i),c(j)}/m-c_0}{m},&c(i)\ne c(j).
  \end{cases}
  \label{eq:explicit-multiplier}
\end{equation}
The numerator is a sum of nonnegative terms.  For \(i\in C_r\)
and \(s\ne r\), its block row sum is
\[
  \sum_{j\in C_s}\Gamma_{ij}
  =M_i-d_{i,s}+\overline M_s-c_0.
\]
Together with \eqref{eq:dual-diagonal}, this shows that the symmetric
slack
\begin{equation}
  \mathsf S=D^*+v\one^{\mathsf T}+\one v^{\mathsf T}-A-\Gamma
  \label{eq:explicit-slack}
\end{equation}
annihilates every planted community indicator.
Each cross block of \(\Gamma\) is a sum of a row term and a column
term, so \(\Pi\Gamma\Pi=0\).  Consequently,
\[
  \Pi\mathsf S\Pi=\Pi(D^*-A)\Pi.
\]
Under the hypothesis, \(\mathsf S\succeq0\) and
\(\ker\mathsf S=\range Q\).  For any feasible \(Z\), the affine
constraints and \(\mathsf SZ^*=0\) give
\[
  \langle A,Z^*-Z\rangle
  =\langle\mathsf S,Z\rangle+\langle\Gamma,Z\rangle\geq0.
\]
If equality holds, positivity forces
\(\range Z\subseteq\ker\mathsf S\), so \(Z\) is block-constant.
Its unit diagonal makes every diagonal block all ones, using the entire
row mass \(m\).  Entrywise nonnegativity then makes all cross blocks
zero.  Thus \(Z=Z^*\).
\end{proof}

To apply the certificate, we must estimate \(D_i^*\).
The next lemma shows that the block corrections are \(o_{\Prob}(L)\);
the degree margins therefore give the leading diagonal values.

\begin{lemma}[Block correction]
\label{lem:block-correction}
The corrections satisfy
\begin{equation}
  \max_{r\in[k]}|\rho_r|=o_{\Prob}(L).
  \label{eq:rho-small}
\end{equation}
\end{lemma}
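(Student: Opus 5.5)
We need to show \(\max_r|\rho_r|=o_{\Prob}(L)\), where \(\rho_r=\overline M_r-c_0\) with \(\overline M_r=m^{-1}\sum_{i\in C_r}\max_{s\ne r}d_{i,s}\) and \(c_0=\min_{r,s}E_{rs}/m\). The plan is to sandwich both quantities around \(\beta L\).

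\textbf{Step 1: the minimum \(c_0\).} For \(r\ne s\), \(E_{rs}\) is \(\operatorname{Bin}(m^2,q)\) with mean \(\beta mL\). By Bernstein's inequality, \(|E_{rs}/m-\beta L|\leq L^{3/4}\) fails with probability at most \(\exp(-cL^{1/2}m)\). A union over \(O(L^2)\) pairs then gives \(\min_{r\ne s}E_{rs}/m=\beta L+o(L)\).

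For the diagonal blocks, \(E_{rr}/m\) is twice the within-community edge count divided by \(m\), with mean about \(\alpha L>\beta L\). It concentrates in the same way, so the overall minimum is attained across distinct blocks. This gives \(c_0=\beta L+o_{\Prob}(L)\) and also the attainment claim used after \eqref{eq:dual-diagonal}.

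\textbf{Step 2: the lower bound on \(\overline M_r\).} Trivially \(M_i\geq d_{i,s}\) for any fixed \(s\ne r\). Hence \(\overline M_r\geq E_{rs}/m\geq c_0\), so \(\rho_r\geq0\) deterministically.

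\textbf{Step 3: the upper bound on \(\overline M_r\).} This is the main step, since we need \(\max_{s\ne r}d_{i,s}\leq(\beta+\epsilon)L\) on average over \(i\in C_r\), uniformly in \(r\). Each \(d_{i,s}\sim\operatorname{Bin}(m,q)\) with mean \(\beta L\), but there are \(k\asymp L\) competitors. Chernoff gives
\[
\Prob(d_{i,s}\geq(\beta+\epsilon)L)\leq m^{-h(\epsilon)},\qquad h(\epsilon)>0 \text{ fixed},
\]
so \(\Prob(M_i\geq(\beta+\epsilon)L)\leq k\,m^{-h(\epsilon)}=m^{-h(\epsilon)+o(1)}\). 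The order-\(L\) many competitors only cost a polynomial-in-\(L\) factor, which is why the max does not inflate at leading order.

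We then split \(M_i\leq(\beta+\epsilon)L+M_i\one\{M_i>(\beta+\epsilon)L\}\). By Lemma~\ref{lem:spectral-edge}, with high probability all \(d_{i,s}\leq CL\). Therefore
\[
\overline M_r\leq(\beta+\epsilon)L+CL\cdot\frac{1}{m}\#\{i\in C_r:M_i>(\beta+\epsilon)L\}.
\]

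\textbf{Step 4: uniformity over \(r\).} It remains to show the count in Step 3 is \(o(m)\) for every \(r\) simultaneously. Its expectation is \(m^{1-h(\epsilon)+o(1)}\), so Markov's inequality gives that it is at most \(m^{1-h/2}\) with probability at least \(1-m^{-h/2+o(1)}\). A union over \(k=O(L)\) communities still tends to zero. No independence is needed.

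\textbf{Conclusion.} Combining the steps gives \(0\leq\rho_r\leq 2\epsilon L+o(L)\) uniformly in \(r\) with high probability. Since \(\epsilon>0\) is arbitrary, this yields \eqref{eq:rho-small}.
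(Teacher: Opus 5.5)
Your proof is correct, but it controls the block averages \(\overline M_r\) by a different route from the paper's.

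\textbf{The paper's route.} It works with moments. Integrating the Bernstein tail of \(M_i\) gives
\(0\le\E M_i-\beta L=O(\sqrt{L\log k}+\log k)\) and \(\E M_i^2=O(L^2)\).
Distinct vertices of \(C_r\) have disjoint sets of cross-block edges, so the \(M_i\) are independent within a block and \(\operatorname{Var}\overline M_r=O(L^2/m)\).
Chebyshev's inequality with a union over the \(k\) blocks then concentrates every \(\overline M_r\) around \(\E M_i\) from both sides.
The lower side comes from \(\E M_i\ge\beta L\) together with this concentration.

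\textbf{Your route.} You get the lower side for free from the deterministic chain \(\overline M_r\ge E_{rs}/m\ge c_0\), so \(\rho_r\ge0\) exactly rather than \(\rho_r\ge-o(L)\).
For the upper side you truncate at \((\beta+\epsilon)L\) and cap the exceptional vertices with the uniform bound \(d_{i,s}=O(L)\) from Lemma~\ref{lem:spectral-edge}.
You then count those vertices with Markov's inequality, which needs only first moments and no independence between the \(M_i\).

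\textbf{Comparison.} Your argument is more elementary and one-sided, at the cost of importing the uniform degree cap; that cap is proved independently in Lemma~\ref{lem:spectral-edge}, so there is no circularity.
The paper's argument is self-contained, since \(\E M_i^2=O(L^2)\) replaces the cap. It also makes a rate explicit: the bias \(\E M_i-\beta L=O(\sqrt{L\log L})\) dominates.
Your fixed-\(\epsilon\) truncation gives only the qualitative \(o_{\Prob}(L)\), though letting \(\epsilon\to0\) slowly recovers a comparable rate. Either suffices for the lemma as stated.
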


\begin{proof}
For a fixed vertex, \(M_i\) is the maximum of \(k-1\) independent
\(\operatorname{Bin}(m,q)\) variables, each with mean \(\mu_0=\beta L\).
Bernstein's inequality and a union bound give
\begin{equation}
  \Prob(M_i-\mu_0\geq u)
  \leq (k-1)\exp\!\left(-\frac{u^2}{2(\mu_0+u/3)}\right),
  \qquad u\geq0.
  \label{eq:max-degree-tail}
\end{equation}
In particular, at
\(u=C\{\sqrt{\mu_0(\log k+t)}+\log k+t\}\), the probability
is at most \(e^{-t}\), for a sufficiently large fixed \(C\).
Integrating this tail gives
\begin{align}
  0\leq\E M_i-\mu_0
    &\leq C\{\sqrt{\mu_0\log k}+\log k\}=o(L), \label{eq:max-degree-mean}\\
  \E(M_i-\mu_0)_+^2
    &\leq C\{\mu_0\log k+(\log k)^2\},\qquad
      \E M_i^2=O(L^2). \label{eq:max-degree-second-moment}
\end{align}
The lower bound in the first line follows because the maximum dominates
each individual variable. We use the second moment to average within
blocks.

For distinct vertices in the same planted block, the collections of
cross-block edges are disjoint and hence independent, even when two
edges share an endpoint outside that block.  Therefore
\(\operatorname{Var}\overline M_r=O(L^2/m)\).
For every fixed \(\varepsilon>0\), Chebyshev's inequality gives
\[
  \Prob\left\{\max_r|\overline M_r-\E M_i|>\varepsilon L\right\}
  \leq\frac{Ck}{m\varepsilon^2}=o(1).
\]
For \(r\ne s\), \(E_{rs}\) has law \(\operatorname{Bin}(m^2,q)\);
for \(r=s\), it has law
\(2\operatorname{Bin}(m(m-1)/2,p)\).
Thus \(\operatorname{Var}(E_{rs}/m)=O(L/m)\), and their means
after division by \(m\) are respectively \(\beta L\) and
\(\alpha L-p\).
A union bound over the \(O(k^2)\) block pairs yields
\[
  \max_{r\ne s}|E_{rs}/m-\beta L|=o_{\Prob}(L),\qquad
  \max_r|E_{rr}/m-\alpha L|=o_{\Prob}(L).
\]
Since \(\alpha>\beta\), \(c_0=\beta L+o_{\Prob}(L)\) and its
minimum is attained across blocks with high probability.
Combining this with \eqref{eq:max-degree-mean} proves
\eqref{eq:rho-small}.
\end{proof}

\subsection{The marked-resolvent estimate}
\label{sec:resolvent}

Recall the marked set \(\mathcal S_t=\{i:\Delta_i\leq tL\}\),
the matrix \(X=(A-\E A)/L\), and the semicircle transform \(g_z\)
in \eqref{eq:gc-tauc}.

\begin{proof}[Proof of Proposition~\ref{prop:marked-resolvent}]
Fix \(t\in(0,a)\) and \(z>2\sigma\). All neighborhood radii and
moment orders in the proof are fixed before
letting \(m\to\infty\).
The main step is to show that fixed moments, compressed to the marked
set, approach scalar semicircle moments in operator norm. A norm bound
then allows us to sum the resolvent series.

\emph{Sparsity of marked neighborhoods.}
Lemma~\ref{lem:margin-ldp}, together with a union over the \(k-1\)
wrong communities and the lower bound from a single comparison, gives
\begin{equation}
  \Prob(i\in\cS_t)=m^{-I_{\alpha,\beta}(t)+o(1)}.
  \label{eq:mark-density}
\end{equation}
Since \(I(t)>0\), Markov's inequality gives some fixed \(\eta>0\)
such that, with high probability,
\begin{equation}
  |\cS_t|\leq nm^{-\eta}.
  \label{eq:marked-size}
\end{equation}
The size bound alone does not control how marks cluster near one
another. A fixed connected graph configuration with \(v\) vertices,
\(e\) required present edges, and \(j\) distinct marked vertices has
expected number of embeddings at most
\[
  n^v p^e\,m^{-jI(t)+o(1)}.
\]
Indeed, condition on its required edges and apply the finite-edge
conditional version of Lemma~\ref{lem:margin-ldp}.  The bound is uniform
over the embeddings and community assignments.

Write \(B_R(i)\) for the induced graph on vertices at distance at most
\(R\) from \(i\).  If it contains a cycle, two breadth-first paths
from \(i\), together with an extra edge, contain a connected unicyclic
configuration with at most \(2R+1\) edges.  Such a configuration has
\(e=v\).  Summing the preceding bound over its finitely many shapes,
with \(i\) marked, gives
\begin{equation}
  \E\#\{i\in\cS_t:B_R(i)\text{ contains a cycle}\}
  \leq m^{-I(t)+o(1)}L^{O(R)}=o(1).
  \label{eq:marked-tree-balls}
\end{equation}
If a radius-\(R\) ball contains \(j\) distinct marked vertices,
join its center to them by paths of length at most \(R\).
Their union is connected, has \(v\leq Rj+1\), and satisfies
\(e\geq v-1\).  The expected number of these witnesses is at most
\begin{equation}
  m^{1-jI(t)+o(1)}L^{O(Rj)}.
  \label{eq:bounded-local-marks}
\end{equation}
Choose a fixed integer \(J_t>1/I(t)\).
The bound \eqref{eq:bounded-local-marks} tends to zero with \(j=J_t\).
Consequently, with high probability, every marked radius-\(R\) ball
is a tree and every radius-\(R\) ball contains fewer than \(J_t\)~marks.

\emph{Walk moments.}
Set \(Y=A/L\), and write \(\mu_\ell\) for the semicircle moments:
\begin{equation}
  \mu_{2r}=\frac1{r+1}\binom{2r}{r}\sigma^{2r},
  \qquad \mu_{2r+1}=0.
  \label{eq:semicircle-moments}
\end{equation}
Let
\(\delta_m=\max_i|\deg(i)/L^2-\sigma^2|=o_{\Prob}(1)\).
On the acyclic-neighborhood event, the leading length-\(2r\) closed
walks from a marked vertex traverse \(r\) distinct edges twice.
Their outward and return steps have
\(\frac1{r+1}\binom{2r}{r}\) Catalan patterns.
Each outward choice has \(\deg(v)-O_r(1)\) possibilities.
Walks reusing a previously chosen outward edge have at most \(r-1\)
free edge choices and lose a factor of order \(L^2\).
Uniform degree concentration therefore gives, uniformly over marked \(i\),
\begin{equation}
  (Y^{2r})_{ii}=\mu_{2r}+O_r(\delta_m+L^{-2}),
  \qquad (Y^{2r+1})_{ii}=0.
  \label{eq:diagonal-walk-count}
\end{equation}
This uses a fixed neighborhood radius large enough for the specified
moment order.

For distinct marked vertices \(i,j\) at distance \(h\leq\ell\),
the unique path between them accounts for \(h\) steps of a
length-\(\ell\) walk.  The remaining steps permit at most
\((\ell-h)/2\) free backtracking choices, each with \(O(L^2)\)
possibilities.  Hence
\begin{equation}
  (Y^\ell)_{ij}=O_\ell(L^{-h});
  \label{eq:offdiagonal-walk-count}
\end{equation}
the entry is zero if the distance exceeds \(\ell\).
There are fewer than \(J_t\) marked vertices within distance \(\ell\).
The maximum absolute row-sum bound for the norm of a symmetric matrix
now yields
\begin{equation}
  \left\|P_{\cS_t}Y^\ell P_{\cS_t}-\mu_\ell P_{\cS_t}\right\|
  \leq O_\ell(\delta_m+L^{-1})=o_{\Prob}(1).
  \label{eq:Y-compressed-moments}
\end{equation} 

\emph{Removing the mean matrix.}
The walk count above uses \(Y=A/L\), whose entries are nonnegative.
We now transfer it to \(X\) by showing that mean-matrix terms vanish
after compression to the sparse marked set.
Let \(U\) have the normalized community indicators as columns.
If \(M\) has diagonal \(\alpha\) and off-diagonal entries \(\beta\),
then
\begin{equation}
  \frac{\E A}{L}=UMU^{\mathsf T}-\frac\alpha m I,
  \qquad \|M\|=O(L).
  \label{eq:mean-factorization}
\end{equation}
The maximum-degree event gives \(\|Y\|=O(L)\).
Moreover, each entry of \(Y^rU\) is bounded by
\(L^{C_r}/\sqrt m\) for a constant \(C_r\) depending only on the
fixed order \(r\): sum the nonnegative walk weights in that row.
Thus
\begin{equation}
  \|P_{\cS_t}Y^rU\|_F
  \leq L^{C_r}\sqrt{\frac{k|\cS_t|}{m}}
  \leq L^{C_r}k\,m^{-\eta/2}.
  \label{eq:mean-removal-bound}
\end{equation}
Expand \(X^\ell=(Y-UMU^{\mathsf T}+\alpha I/m)^\ell\) as a sum
of noncommutative words.  In a word containing \(UMU^{\mathsf T}\),
cut at its first and last such factors.  The endpoint factors are of the
form bounded in \eqref{eq:mean-removal-bound}; every intermediate
factor has norm at most a fixed power of \(L\).
Its marked compression consequently has norm at most
\begin{equation}
  L^{C_\ell}\frac{k|\cS_t|}{m}
  \leq L^{C_\ell}k^2m^{-\eta}=o(1),
  \label{eq:mean-word-bound}
\end{equation}
with \(C_\ell\) enlarged if necessary.
A word containing \(\alpha I/m\) but no \(UMU^{\mathsf T}\)
has norm at most \(m^{-1}L^{C_\ell}=o(1)\).
There are only finitely many words at fixed \(\ell\).
Combining these bounds with \eqref{eq:Y-compressed-moments} gives
\begin{equation}
  \left\|P_{\cS_t}X^\ell P_{\cS_t}-\mu_\ell P_{\cS_t}\right\|
  =o_{\Prob}(1).
  \label{eq:X-compressed-moments}
\end{equation} 

\emph{Summing the resolvent series.}
Fix \(b\) with \(2\sigma<b<z\).
By Lemma~\ref{lem:spectral-edge}, \(\|X\|\leq b\) with high
probability.  On that event,
\[
  (zI-X)^{-1}=\sum_{\ell=0}^{\infty}\frac{X^\ell}{z^{\ell+1}},
  \qquad
  \left\|\sum_{\ell>N}\frac{X^\ell}{z^{\ell+1}}\right\|
  \leq\frac{(b/z)^{N+1}}{z-b}.
\]
The corresponding scalar moment tail obeys the same bound because the
semicircle law is supported on \([-2\sigma,2\sigma]\).
First fix \(N\) and let \(m\to\infty\), using
\eqref{eq:X-compressed-moments}; then let \(N\to\infty\).
Since \(\sum_{\ell\geq0}\mu_\ell/z^{\ell+1}=g_z\), this proves
\eqref{eq:marked-resolvent}.
\end{proof}

\subsection{Unique optimality above the tightness boundary}
\begin{proof}[Proof of the success assertion in Theorem~\ref{thm:main}]
\emph{The smallest dual diagonal.}
Assume \(a>2\sigma\) and \(I_{\alpha,\beta}(\tau)>1\).
Choose \(\varepsilon>0\) so that
\begin{equation}
  \tau+3\varepsilon<a,\qquad I_{\alpha,\beta}(\tau+3\varepsilon)>1.
  \label{eq:delta-choice}
\end{equation}
Lemma~\ref{lem:margin-ldp} and a union bound over
\(n(k-1)=mL^{O(1)}\) comparisons imply, with high probability,
\begin{equation}
  \min_i\Delta_i>(\tau+3\varepsilon)L.
  \label{eq:min-delta}
\end{equation}
Lemma~\ref{lem:block-correction} then gives
\begin{equation}
  \min_iD_i^*>(\tau+2\varepsilon)L.
  \label{eq:min-Dstar}
\end{equation}
Choose \(z\in(2\sigma,a)\) sufficiently close to \(a\) that
\begin{equation}
  \tau<\tau_z<\tau+\varepsilon.
  \label{eq:tauc-choice}
\end{equation} 

\emph{The Schur complement.}
Define the shifted diagonal and the small-diagonal set by
\begin{equation}
  \widehat d_i=D_i^*+p,\qquad
  \mathcal S=\{i:\widehat d_i<zL\},\qquad
  \mathcal T=[n]\setminus\mathcal S.
  \label{eq:small-diagonal-set}
\end{equation}
The shift by \(p\) accounts for the zero diagonal of \(A\).
For any fixed \(t\in(z,a)\), \eqref{eq:dual-diagonal} and
\eqref{eq:rho-small} imply, with high probability,
\begin{equation}
  \mathcal S\subseteq\cS_t.
  \label{eq:S-marked-subset}
\end{equation}
Since \(\Pi\E A\Pi=-p\Pi\), it suffices to prove positive definiteness of
\begin{equation}
  M_D=\diag(\widehat d_i)-W.
  \label{eq:MD}
\end{equation}
Lemma~\ref{lem:spectral-edge} makes the comparison
matrix \(M_z=zLI-W\) positive definite with high probability.
Its Schur complement onto \(\mathcal S\) is
\begin{equation}
  C_{\mathcal S}
  =zLI_{\mathcal S}-W_{\mathcal S\mathcal S}
   -W_{\mathcal S\mathcal T}
    (zLI_{\mathcal T}-W_{\mathcal T\mathcal T})^{-1}
    W_{\mathcal T\mathcal S}.
  \label{eq:base-schur}
\end{equation}
If \(\mathcal S\) is empty, \(M_D\succeq M_z\succ0\) and we
are done.  Otherwise, use Proposition~\ref{prop:marked-resolvent}
and \eqref{eq:S-marked-subset}.
For any matrix \(\mathcal R\), the deterministic inequality
\[
  \|P_{\mathcal S}\mathcal R P_{\mathcal S}\|
  \leq\|P_{\cS_t}\mathcal R P_{\cS_t}\|
\]
holds on this containment event, so no further independence is needed.
The inverse of a Schur complement is the corresponding principal
block of the inverse. Applying this identity gives
\begin{equation}
  C_{\mathcal S}^{-1}
  =\frac1L\bigl(g_zI_{\mathcal S}+o_{\Prob}(1)\bigr).
  \label{eq:base-schur-inverse}
\end{equation}
Since \(g_z>0\) and \(g_z^{-1}=z-\sigma^2g_z=z-\tau_z\),
\begin{equation}
  C_{\mathcal S}=(z-\tau_z)LI_{\mathcal S}+o_{\Prob}(L).
  \label{eq:base-schur-value}
\end{equation}
All matrix error terms here are in operator norm.

\emph{Positivity of the dual form.}
The \(\mathcal T\times\mathcal T\) principal block of \(M_D\)
satisfies
\[
  (M_D)_{\mathcal T\mathcal T}
  \succeq zLI_{\mathcal T}-W_{\mathcal T\mathcal T}\succ0.
\]
Inverse monotonicity therefore bounds the Schur complement \(K_D\) by
\begin{equation}
  K_D\succeq C_{\mathcal S}
       +\diag(\widehat d_i-zL:i\in\mathcal S).
  \label{eq:schur-comparison}
\end{equation}
Using \eqref{eq:min-Dstar}, \eqref{eq:tauc-choice}, and
\eqref{eq:base-schur-value},
\begin{equation}
  \lambda_{\min}(K_D)
  \geq\min_{i\in\mathcal S}\widehat d_i-\tau_zL+o_{\Prob}(L)
  \geq(\varepsilon+o_{\Prob}(1))L>0
  \label{eq:schur-positive}
\end{equation}
with high probability.  The \(\mathcal T\)-block is positive definite
as well, so \(M_D\succ0\).

Finally,
\(\E A=(p-q)Z^*+qJ-pI\).
For every nonzero \(x\in\cH\), the first two terms vanish in the
quadratic form, and consequently
\begin{equation}
  x^{\mathsf T}(D^*-A)x=x^{\mathsf T}M_Dx>0.
  \label{eq:slack-on-H}
\end{equation}
Lemma~\ref{lem:dual-reduction} proves unique SDP recovery.
\end{proof}

%% file: appendices/error-upper.tex
\subsection{Matrix-error bounds from the dual certificate}
\label{app:error-upper}

When some margins fall below \(\tau L\), the slack matrix need not be
positive semidefinite. We increase the small entries of the dual diagonal
and use the total adjustment to bound both matrix error and objective gain.
This use of a planted dual certificate beyond tightness follows
\citet[Section~1.1]{FeiChenBayes19}. The marked-resolvent estimate in
Proposition~\ref{prop:marked-resolvent} allows us to make the adjustment
only below a threshold arbitrarily close to \(\tau L\).

\begin{lemma}[Deterministic repair inequality]
\label{lem:repair}
For every \(Z\in\mathcal F\),
\begin{equation}
  \|Z-Z^*\|_{1,\mathrm{entry}}=2m\Tr(\Pi Z),\qquad
  0\leq(\Pi Z\Pi)_{ii}\leq2.
  \label{eq:trace-error-identity}
\end{equation}
If a nonnegative diagonal matrix \(F\) satisfies
\(\mathsf S+\Pi F\Pi\succeq\kappa L\Pi\), with \(\kappa>0\),
then
\begin{equation}
  \langle A,Z-Z^*\rangle
  \leq2\Tr F-\kappa L\Tr(\Pi Z).
  \label{eq:full-repair-inequality}
\end{equation}
In particular, if \(\langle A,Z\rangle\geq\langle A,Z^*\rangle-r_0\)
for \(r_0\geq0\), then
\begin{equation}
  \mathcal E_m(Z)\leq\frac{r_0+2\Tr F}{\kappa nL}.
  \label{eq:repair-error}
\end{equation}
\end{lemma}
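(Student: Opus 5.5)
The plan is to establish the two identities in \eqref{eq:trace-error-identity} directly from the constraints, and then combine them with the slack identity \eqref{eq:main-slack-identity}.

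\emph{The trace identity.} Write \(\Tr(\Pi Z)=\Tr Z-\Tr(QZ)=n-\langle Z^*,Z\rangle/m\), using \(\diag Z=\one\) and \(Q=Z^*/m\). Every feasible \(Z\) has entries in \([0,1]\) and row sums \(m\). Hence the within-community deficit \(\sum_{c(i)=c(j)}(1-Z_{ij})\) equals the between-community mass \(\sum_{c(i)\ne c(j)}Z_{ij}\), and each equals \(nm-\langle Z^*,Z\rangle\). So
\[
\|Z-Z^*\|_{1,\mathrm{entry}}=2(nm-\langle Z^*,Z\rangle)=2m\Tr(\Pi Z).
\]

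\emph{The diagonal bound.} Since \(\Pi\) is a projection and \(Z\succeq0\), we have \((\Pi Z\Pi)_{ii}\geq0\). For the upper bound, expand
\[
(\Pi Z\Pi)_{ii}=Z_{ii}-2(QZ)_{ii}+(QZQ)_{ii}.
\]
Here \(Z_{ii}=1\) and \((QZ)_{ii}=m^{-1}\sum_{j\in C_{c(i)}}Z_{ij}\in[0,1]\) by nonnegativity and the row sums. The term \((QZQ)_{ii}\) is the average of \(Z\) over the diagonal block of \(c(i)\), which lies in \([0,1]\). This gives \((\Pi Z\Pi)_{ii}\leq 2\).

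\emph{The repair inequality.} By \eqref{eq:main-slack-identity} and \(\Gamma\geq0\), \(Z\geq0\),
\[
\langle A,Z-Z^*\rangle=-\langle\mathsf S,Z\rangle-\langle\Gamma,Z\rangle\leq-\langle\mathsf S,Z\rangle.
\]
The step that needs care is identifying \(\langle\mathsf S,Z\rangle\) with \(\langle\Pi\mathsf S\Pi,Z\rangle\). We have \(\mathsf S Z^*=0\), so \(\mathsf S Q=0\) and \(\mathsf S=\Pi\mathsf S\Pi\). Thus the slack is already supported on \(\cH\), and \(\mathsf S+\Pi F\Pi\succeq\kappa L\Pi\) can be paired with \(Z\succeq0\). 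This gives
\[
\langle\mathsf S,Z\rangle\geq\kappa L\Tr(\Pi Z)-\langle F,\Pi Z\Pi\rangle.
\]
Since \(F\) is nonnegative diagonal, \(\langle F,\Pi Z\Pi\rangle=\sum_iF_{ii}(\Pi Z\Pi)_{ii}\leq2\Tr F\) by the diagonal bound, which yields \eqref{eq:full-repair-inequality}.

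\emph{The error bound.} If \(\langle A,Z-Z^*\rangle\geq-r_0\), then \eqref{eq:full-repair-inequality} gives \(\kappa L\Tr(\Pi Z)\leq r_0+2\Tr F\). Dividing by \(\kappa nL\) and using \(\mathcal E_m(Z)=\Tr(\Pi Z)/n\), which follows from the trace identity and \eqref{eq:error-metrics}, gives \eqref{eq:repair-error}.

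The only nontrivial check is that \(\mathsf S\) from \eqref{eq:explicit-slack} annihilates \(\range Q\) without assuming positivity. This was verified in the proof of Lemma~\ref{lem:dual-reduction} from the block row sums of \(\Gamma\), independently of any positivity hypothesis. It is also what makes \eqref{eq:main-slack-identity} hold for every feasible \(Z\): it follows from \(\langle v\one^{\mathsf T}+\one v^{\mathsf T},Z-Z^*\rangle=0\) and \(\langle D^*,Z-Z^*\rangle=0\).
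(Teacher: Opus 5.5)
Your proof is correct and follows essentially the same route as the paper. The three steps match: the mass-balance argument gives the trace identity, the expansion of \((\Pi Z\Pi)_{ii}\) gives the diagonal bound, and pairing \(\mathsf S+\Pi F\Pi\succeq\kappa L\Pi\) with \(Z\succeq0\), combined with the slack identity and \(\Gamma\geq0\), gives the repair inequality.

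One small remark: the pairing step does not actually need \(\mathsf S=\Pi\mathsf S\Pi\), since the hypothesis is already an inequality for \(\mathsf S\) itself. That identity only reconciles the main-text definition \(\mathsf S=\Pi(D^*-A)\Pi\) with the appendix definition.
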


\begin{proof}
Positive semidefiniteness and unit diagonal imply
\(|Z_{ij}|\leq1\). Entrywise nonnegativity gives \(0\leq Z_{ij}\leq1\).
The row constraints give total entry mass \(nm\). Thus the missing
within-community mass equals the mass between communities, say \(T\).
Consequently,
\(\|Z-Z^*\|_{1,\mathrm{entry}}=2T\), while
\(\Tr(\Pi Z)=n-\langle Z^*,Z\rangle/m=T/m\).
For \(i\in C_r\),
\[
  (\Pi Z\Pi)_{ii}
  =1-\frac2m\sum_{j\in C_r}Z_{ij}
     +\frac1{m^2}\sum_{j,\ell\in C_r}Z_{j\ell}.
\]
The diagonal entry is nonnegative because \(\Pi Z\Pi\succeq0\).
The middle sum is nonnegative and the final sum is at most \(m^2\),
which gives the upper bound of two.

The construction in Appendix~\ref{sec:success} gives
\(\mathsf S=\Pi\mathsf S\Pi\), \(\Gamma\geq0\), and
the identity \eqref{eq:main-slack-identity}, without a positivity
assumption on \(\mathsf S\). Taking the matrix inner product of
\(\mathsf S+\Pi F\Pi\succeq\kappa L\Pi\) with \(Z\succeq0\) gives
\[
  \kappa L\Tr(\Pi Z)
  \leq\langle\mathsf S,Z\rangle+\langle F,\Pi Z\Pi\rangle
  \leq-\langle A,Z-Z^*\rangle+2\Tr F.
\]
Rearranging proves \eqref{eq:full-repair-inequality}. The objective
assumption then gives \eqref{eq:repair-error}.
\end{proof}

\begin{lemma}[Sharp diagonal repair]
\label{lem:clipped}
Assume \(a>2\sigma\). For every fixed \(b\in(\tau,a)\), the
diagonal matrix defined by
\[
  F_{ii}=(bL-D_i^*-p)_+
\]
satisfies
\(\mathsf S+\Pi F\Pi\succeq\kappa L\Pi\) with high probability
for some fixed \(\kappa>0\). For every fixed \(u\in(b,a)\) and
\(\xi>0\), it also satisfies
\begin{equation}
  \Tr F\leq nL\,m^{-I(u)+\xi}
  \label{eq:repair-trace}
\end{equation}
with high probability.
\end{lemma}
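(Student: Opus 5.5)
The plan has two parts. The positivity claim reuses the Schur-complement argument from the success assertion of Theorem~\ref{thm:main}, with the repaired diagonal in place of \(\widehat d_i\). The trace bound then follows from the lower-tail estimates of Lemma~\ref{lem:margin-ldp}.

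\emph{Positivity.} Since \(\Pi\E A\Pi=-p\Pi\), we have
\(\mathsf S+\Pi F\Pi=\Pi(\diag(\widehat d_i+F_{ii})-W)\Pi\). It therefore suffices to show
\[
  M_F=\diag(\widehat d_i+F_{ii})-W\succeq\kappa L I .
\]
By construction, \(\widehat d_i+F_{ii}=\max(\widehat d_i,bL)\geq bL\) for every vertex.

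Fix \(z\in(\max\{b,2\sigma\},a)\) with \(\tau_z<b\). This is possible because \(\tau_z\downarrow\tau\) as \(z\uparrow a\) and \(b>\tau\). Split the vertices into \(\mathcal S=\{i:\widehat d_i<zL\}\) and its complement \(\mathcal T\).

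\begin{itemize}
\item On \(\mathcal T\), the diagonal is at least \(zL\), so the \(\mathcal T\)-block of \(M_F\) dominates \(zLI-W_{\mathcal T\mathcal T}\). Its eigenvalues are at least \((z-2\sigma-o_{\Prob}(1))L\) by Lemma~\ref{lem:spectral-edge}.
\item As in \eqref{eq:S-marked-subset}, Lemma~\ref{lem:block-correction} gives \(\mathcal S\subseteq\cS_t\) for any fixed \(t\in(z,a)\) with high probability. Proposition~\ref{prop:marked-resolvent} then gives \eqref{eq:base-schur-value} for the comparison Schur complement \(C_{\mathcal S}\).
\item By inverse monotonicity, exactly as in \eqref{eq:schur-comparison}, the Schur complement of \(M_F\) onto \(\mathcal S\) dominates
\[
  C_{\mathcal S}+\diag\bigl(\max(\widehat d_i,bL)-zL\bigr)_{i\in\mathcal S}
  \succeq(b-\tau_z-o_{\Prob}(1))L\,I_{\mathcal S}.
\]
\end{itemize}

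To get a uniform lower bound on all of \(M_F\), write a vector as \((x_{\mathcal S},x_{\mathcal T})\) and substitute \(x_{\mathcal T}=-B^{-1}W_{\mathcal T\mathcal S}x_{\mathcal S}+w\), where \(B=(M_F)_{\mathcal T\mathcal T}\). The quadratic form then splits as
\[
  x^{\mathsf T}M_Fx=x_{\mathcal S}^{\mathsf T}K_Fx_{\mathcal S}+w^{\mathsf T}Bw .
\]
The elimination map has norm \(\|B^{-1}W_{\mathcal T\mathcal S}\|\leq 2\sigma/(z-2\sigma)+o(1)=O(1)\). Hence \(\|x\|^2\lesssim\|x_{\mathcal S}\|^2+\|w\|^2\), and
\[
  M_F\succeq\kappa LI,\qquad \kappa=c\min(b-\tau_z,\;z-2\sigma)>0 .
\]
Compressing by \(\Pi\) gives \(\mathsf S+\Pi F\Pi\succeq\kappa L\Pi\).

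\emph{Trace bound.} Fix \(u\in(b,a)\). On the event \(\max_r|\rho_r|\leq(u-b)L/2\) (Lemma~\ref{lem:block-correction}), \(F_{ii}>0\) forces \(\Delta_i< bL+|\rho_{c(i)}|\leq uL\), i.e.\ \(i\in\cS_u\). Also \(F_{ii}\leq bL+|D_i^*|+p=O(L)\) by the degree bounds of Lemma~\ref{lem:spectral-edge}.

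It follows that \(\Tr F\leq CL\,|\cS_u|\). By \eqref{eq:mark-density}, \(\E|\cS_u|\leq n\,m^{-I(u)+o(1)}\). Markov's inequality then gives \(|\cS_u|\leq n\,m^{-I(u)+\xi/2}\) with high probability, and absorbing the constant \(C\) yields \eqref{eq:repair-trace}.

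\emph{Main obstacle.} The delicate step is getting a fixed \(\kappa\) rather than mere positive definiteness. This needs the operator-norm control of the eliminated background and the uniformity of the resolvent estimate on the random set \(\mathcal S\). Both are supplied by Proposition~\ref{prop:marked-resolvent} together with the deterministic compression inequality \(\|P_{\mathcal S}\mathcal RP_{\mathcal S}\|\leq\|P_{\cS_t}\mathcal RP_{\cS_t}\|\) on the containment event.
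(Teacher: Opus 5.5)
Your proposal is correct and follows the paper's proof essentially step for step: the same choice of $z$ with $\tau_z<b$, the same split into $\mathcal S$ and $\mathcal T$ with $\mathcal S\subseteq\mathcal S_t$, inverse monotonicity against the comparison Schur complement from Proposition~\ref{prop:marked-resolvent}, a bounded elimination map to upgrade to a uniform $\kappa L$ lower bound, and a Markov bound on the number of low-margin vertices for the trace. One cosmetic slip: because the off-diagonal block of $M_F$ is $-W_{\mathcal T\mathcal S}$, completing the square requires the substitution $x_{\mathcal T}=B^{-1}W_{\mathcal T\mathcal S}x_{\mathcal S}+w$ (with a plus sign), and this does not affect your norm bound or $\kappa$.
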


\begin{proof}
Choose a fixed \(z\in(\max\{2\sigma,b\},a)\) sufficiently close to
\(a\) that \(\tau_z<b\), and write
\(\widehat d_i=D_i^*+p\).
Let \(\mathcal S=\{i:\widehat d_i<zL\}\) and
\(\mathcal T=[n]\setminus\mathcal S\).
For fixed \(v\in(z,a)\), the bound
\(\max_r|\rho_r|=o_{\Prob}(L)\)
gives \(\mathcal S\subseteq\mathcal S_v\) with high probability.
As in \eqref{eq:base-schur-value}, the Schur complement of
\(zLI-W\) onto \(\mathcal S\) is
\[
  C_{\mathcal S}=(z-\tau_z)LI_{\mathcal S}+o_{\Prob}(L)
\]
in operator norm. For
\(M=\diag(\max\{\widehat d_i,bL\})-W\), the \(\mathcal T\)-block
has minimum eigenvalue at least \((z-2\sigma-o_{\Prob}(1))L\).
The inverse comparison used in \eqref{eq:schur-comparison} bounds the
Schur complement of \(M\) onto \(\mathcal S\) by
\[
  K_M\succeq C_{\mathcal S}
    +\diag(\max\{\widehat d_i,bL\}-zL:i\in\mathcal S)
  \succeq(b-\tau_z-o_{\Prob}(1))LI_{\mathcal S}.
\]
If \(\mathcal S\) is empty, the lower bound on \(M\) follows directly
from the background block. Otherwise, the two bounds above give a
lower bound on the full matrix as follows. Write
\(M=\left(\begin{smallmatrix}A_0&B_0\\
B_0^{\mathsf T}&C_0\end{smallmatrix}\right)\) in this decomposition.
Here \(C_0\succeq cL I\) and \(K_M\succeq c'L I\).
By Lemma~\ref{lem:spectral-edge}, with high probability,
\(\|B_0\|\leq\|W\|\leq(2\sigma+1)L\).
The identity
\[
  \binom{x}{y}^{\mathsf T}M\binom{x}{y}
  =x^{\mathsf T}K_Mx+
    (y+C_0^{-1}B_0^{\mathsf T}x)^{\mathsf T}
      C_0(y+C_0^{-1}B_0^{\mathsf T}x)
\]
expresses the quadratic form as a sum of two nonnegative terms.
The map \((x,y)\mapsto(x,y+C_0^{-1}B_0^{\mathsf T}x)\) and its
inverse have norms bounded by fixed constants with high probability.
Hence \(M\succeq\kappa LI\)
for a fixed \(\kappa>0\).
Restricting this inequality to \(\mathcal H\) proves the first assertion, since
\(\Pi(D^*-A)\Pi=\Pi(\diag(\widehat d_i)-W)\Pi\).

For the total adjustment, the uniform block-degree bounds and
\eqref{eq:rho-small} give
\(\max_i F_{ii}\leq CL\) with high probability. Since \(u>b\),
\eqref{eq:rho-small} also ensures that \(F_{ii}>0\) implies
\(\Delta_i\leq uL\) with high probability.
The expected number of vertices with \(\Delta_i\leq uL\) is at most
\(n(k-1)m^{-I(u)+o(1)}=n m^{-I(u)+o(1)}\).
Markov's inequality bounds this count by \(n m^{-I(u)+\xi/2}\)
with high probability. Multiplying by \(CL\) and using
\(C\leq m^{\xi/2}\) for large \(m\) proves \eqref{eq:repair-trace}.
\end{proof}

For later use, the uniform degree bounds also give
\begin{equation}
  \|\mathsf S\|
  \leq\|D^*+pI\|+\|W\|=O_{\Prob}(L).
  \label{eq:slack-norm}
\end{equation}
Combining this norm bound with \eqref{eq:main-slack-identity} and
\(\Gamma\geq0\) gives, simultaneously for every feasible \(Z\),
\begin{equation}
  \langle A,Z-Z^*\rangle
  \leq CL\Tr(\Pi Z)=CnL\,\mathcal E_m(Z)
  \label{eq:gain-forces-error}
\end{equation}
on a high-probability event.

\paragraph{Upper bounds in Theorem~\ref{thm:error}.}
Fix \(\varepsilon>0\). By continuity, choose
\(\tau<b<u<a\) sufficiently close to \(\tau\) that
\(I(u)>I(\tau)-\varepsilon/4\), then take \(\xi<\varepsilon/4\).
Lemmas~\ref{lem:repair}--\ref{lem:clipped} bound the optimal objective gain by
\(2\Tr F\). Every feasible matrix with objective at least \(V_m-1\)
has error at most
\((1+2\Tr F)/(\kappa nL)\).
The constants in these bounds are at most \(m^{\varepsilon/2}\)
for all sufficiently large \(m\).
Since \(I(\tau)<1\), \(1/(nL)=o(m^{-I(\tau)})\).
This proves both uniform upper bounds in
\eqref{eq:gain-exponent}--\eqref{eq:error-exponent}.

\paragraph{A coarser bound for initialization.}
If \(b\in(2\sigma,a)\), then
\[
  \diag(\max\{D_i^*+p,bL\})-W
  \succeq(b-2\sigma-o_{\Prob}(1))LI
\]
follows directly from Lemma~\ref{lem:spectral-edge}.
Choose any \(u\in(b,a)\), so that \(I(u)>0\).
Bounding the total diagonal adjustment as above gives, for some fixed
\(c_{\mathrm{init}}>0\),
\begin{equation}
  \sup_{\substack{Z\in\mathcal F\\
        \langle A,Z\rangle\geq V_m-1}}\mathcal E_m(Z)
  \leq m^{-c_{\mathrm{init}}}
  \label{eq:coarse-initialization}
\end{equation}
with high probability. Thus the global norm bound suffices for this
initialization guarantee.

Figure~\ref{fig:margin-rates} compares the three relevant levels of
the rate function. At \((\alpha,\beta,\gamma)=(5,1,3)\), we have
\(a=4>2\sqrt3\) and \(\tau=1\). The information condition
\(I(0)>1\) holds, while \(I(\tau)<1\), so the planted matrix is
not optimal. The coarse argument still gives a positive error exponent,
but its limiting rate \(I(2\sigma)\) is much smaller than
\(I(\tau)\).

\begin{figure}[htbp]
  \centering
  \includegraphics[width=.78\linewidth]{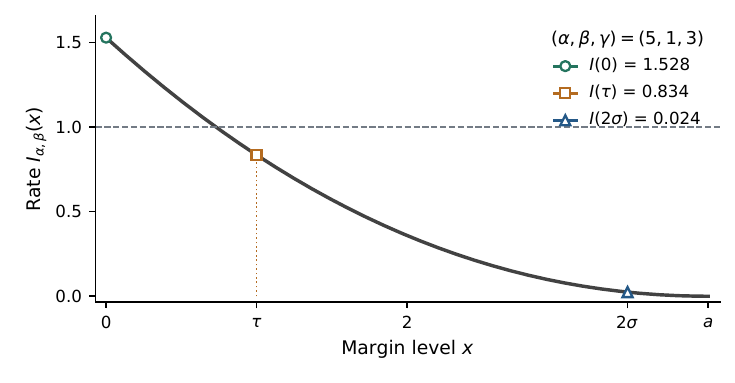}
  \caption{Degree-margin rates at the information, local, and coarse
  spectral thresholds. For \((\alpha,\beta,\gamma)=(5,1,3)\),
  \(I(0)\approx1.528\), \(I(\tau)\approx0.834\), and
  \(I(2\sigma)\approx0.024\). The dashed line marks rate one.
  The local error law uses \(I(\tau)\); the coarser initialization
  bound can approach \(I(2\sigma)\) as its clipping threshold
  decreases to \(2\sigma L\). All points are evaluations of
  \eqref{eq:rate-function}.}
  \label{fig:margin-rates}
\end{figure}

%% file: appendices/gram.tex
\section{Finite-step Gram construction}
\label{sec:gram}

We prove Lemma~\ref{lem:finite-gram} by constructing a feasible matrix
with a step size that does not shrink as \(m\) grows. The identical
coordinate multisets in the active communities preserve the row sums.
We then mix two Gram matrices to make all entries nonnegative and bound
the effect of this adjustment on the objective.

\begin{proof}[Proof of Lemma~\ref{lem:finite-gram}]
Index the active communities by \(1,\ldots,K\).
All margins below are extended by zero on inactive vertices.
Let \(B=\max_i y_i^2\), \(E=\|y\|_2^2\), and choose a fixed
\(D\) with \(d_{i,s}\leq DL\) for all vertex--community pairs.
The assumptions bound \(B\) and \(D\) independently of \(m\).
Write
\[
  \eta=\frac{2(K-1)\mu}{\ell}=o(1),
  \qquad 0<tB\leq\frac18.
\]
For large \(m\), \(0\leq\eta\leq1\).
Use the vectors \(v_i^P,v_i^G\) defined on active communities in
Section~\ref{sec:proof-ideas}, and set
\(v_i^P=v_i^G=e_{c(i)}\) on inactive communities, with these extra
unit vectors orthogonal to the active coordinates and to \(f\).
Let \(P_{ij}=\langle v_i^P,v_j^P\rangle\) and
\(G_{ij}=\langle v_i^G,v_j^G\rangle\).

\emph{Positive semidefiniteness, unit diagonal, and row sums.}
Both matrices are positive semidefinite because they are Gram matrices.
The identities
\[
  \frac{1+r_i^2}{2}+ty_i^2=1,\qquad
  \frac1K+\frac{K-1}{K}c_i^2+ty_i^2=1
\]
give unit diagonal. Since the active communities have identical coordinate
multisets, the sums of \(r_i\) and of \(c_i\) agree across these
communities. The coordinates of \(y\) sum to zero in each community,
so the total coefficient along \(f\) is zero. For the vectors defining
\(P\), the total coefficient along \(e_s\) is
\[
  \frac12\sum_{i\in C_s}(1+r_i)
  +\frac12\sum_{i\in C_{\pi(s)}}(1-r_i)=m.
\]
For the vectors defining \(G\), the equal block sums of \(c_i\)
and the identity \(\sum_s(e_s-\bar e)=0\) give the same conclusion.
Thus the vectors in each construction sum to \(m\sum_{s=1}^k e_s\).
The coefficients of each individual vector along \(e_1,\ldots,e_k\)
sum to one. Taking its inner product with the total vector sum therefore
gives \(P\one=G\one=m\one\).

Set \(Z(t)=(1-\eta)P+\eta G\).
It is the Gram matrix of the direct-sum vectors
\((\sqrt{1-\eta}\,v_i^P,\sqrt\eta\,v_i^G)\), so its rank is at
most \(2(k+1)\). At \(t=0\), \(P=G=Z^*\).

\emph{Entrywise nonnegativity.}
Within an active community,
\[
  P_{ij}=\frac{1+r_ir_j}{2}+ty_iy_j,\qquad
  G_{ij}=\frac1K+\frac{K-1}{K}c_ic_j+ty_iy_j.
\]
For \(u,v\in[0,1]\),
\(\sqrt{1-u}\sqrt{1-v}\geq1-u-v\).
Both entries are therefore at least \(1-3tB\geq5/8\).
For distinct active communities,
\begin{equation}
  Z_{ij}(t)=ty_iy_j+\frac{\eta}{K}(1-c_ic_j)
   +\frac{1-\eta}{2}(1-r_ir_j)
      \mathbf1_{\{c(j)=\pi(c(i))\}}.
  \label{eq:gram-cross-entries}
\end{equation}
The final term is nonnegative. The arithmetic-geometric mean
inequality gives
\[
  \frac{1-c_ic_j}{K}
  \geq\frac{t(y_i^2+y_j^2)}{2(K-1)}.
\]
For equal signs, every term in \eqref{eq:gram-cross-entries} is
nonnegative. When \(y_i\geq\ell\) and \(y_j=-\mu\),
\[
  Z_{ij}(t)\geq-t\mu y_i+\frac{\eta t y_i^2}{2(K-1)}
  =t\mu y_i\left(\frac{y_i}{\ell}-1\right)\geq0.
\]
Entries involving inactive blocks are unchanged from \(Z^*\).
Thus \(Z(t)\in\mathcal F\).

\emph{The objective of the paired construction.}
Let \(A_\pi\) equal \(A\) within active communities, \(-A\)
across matched active communities, and zero elsewhere.
Its active row sums are
\(D_i^\pi=d_{i,c(i)}-d_{i,\pi(c(i))}\).
Write \(b_i=1-r_i\), with \(b_i=0\) on inactive vertices.
Since \(b_i=ty_i^2+b_i^2/2\), direct expansion yields
\begin{align}
  \langle A,P-Z^*\rangle
    &=t y^{\mathsf T}Ay-\sum_iD_i^\pi b_i
      +\tfrac12b^{\mathsf T}A_\pi b \notag\\
    &=t\mathcal Q(y)
      -\frac14\sum_{i,j}(A_\pi)_{ij}(b_i-b_j)^2.
  \label{eq:paired-gram-identity}
\end{align}
Here and below, the edge sums count both orientations.
The absolute row sums of \(A_\pi\) are at most \(2DL\), and
\[
  0\leq b_i=\frac{2ty_i^2}{1+r_i}\leq2ty_i^2,\qquad
  \sum_i b_i^2\leq4t^2BE.
\]
These bounds show that the remainder in
\eqref{eq:paired-gram-identity} has absolute value at most \(8DBt^2LE\).

\emph{The effect of the second Gram matrix.}
Let \(A_{\rm in}\) be the adjacency matrix restricted to edges within active communities,
\(A_{\rm act}\) the full adjacency restricted to active vertices,
and \(T=A_{\rm in}-A_{\rm act}/K\).
Define
\[
  \bar D_i=d_{i,c(i)}
       -\frac1{K-1}
          \sum_{\substack{s\ {\rm active}\\s\ne c(i)}}d_{i,s},
  \qquad h_i=1-c_i.
\]
The identity \(h_i=\frac{K}{2(K-1)}ty_i^2+h_i^2/2\)
gives
\begin{equation}
  \langle A,G-Z^*\rangle
  =t\left(y^{\mathsf T}Ay-\sum_i\bar D_i y_i^2\right)
      -\frac12\sum_{i,j}T_{ij}(h_i-h_j)^2.
  \label{eq:global-gram-identity}
\end{equation}
The absolute row sums of \(T\) are at most \(2DL\);
\(h_i\leq2ty_i^2\), and
\(|\bar D_i-D_i^\pi|\leq DL\).
When we subtract \eqref{eq:paired-gram-identity}, the term
\(t y^{\mathsf T}Ay\) cancels. Bounding the margin difference and
both remainders gives
\[
  |\langle A,G-P\rangle|
  \leq DtLE+24DBt^2LE\leq4DtLE.
\]
Therefore
\begin{equation}
  \langle A,Z(t)-Z^*\rangle
  \geq t\mathcal Q(y)-8DBt^2LE-4D\eta tLE.
  \label{eq:full-gram-gain}
\end{equation}
Choose \(t>0\) fixed with \(tB\leq1/8\) and
\(8DBt\leq\delta/4\), then take \(m\) sufficiently large that
\(4D\eta\leq\delta/4\). The gain is at least
\(\delta tLE/2\), proving the lemma.
\end{proof}

The step size depends on \(\delta\), \(B\), and \(D\).
It is fixed as \(m\to\infty\), but may decrease when the energy
margin \(\delta\) approaches zero.

%% file: appendices/forests.tex
\section{Multi-root forests and matching lower bounds}
\label{sec:failure}

The Gram construction requires a vector with identical coordinate
multisets in the active communities. We obtain such a vector from
forests rooted at low-margin vertices. We explore the roots in groups,
with one root from each active community, and keep a group only if every
exploration succeeds. Each retained group then has the same number of
vertices at each depth in every active community. We show that the
discarded groups form a vanishing fraction, leaving enough roots for
the objective-gain lower bound.

Let \(K=2\lfloor k/2\rfloor\), take the first \(K\) communities
as active, and fix a perfect matching \(\pi\) before the graph is sampled.
Write \(D_i^\pi=D_{i,\pi(c(i))}\) on active vertices and
\(D_i^\pi=0\) elsewhere.
For \(r\) active, write
\(\mathcal N(r)=\{1,\ldots,K\}\setminus\{r,\pi(r)\}\).
Let \(\mathcal F_{\rm match}\) be the sigma-algebra generated by all
edges within active communities and between matched active communities.
It determines every matched margin \(D_{i,\pi(c(i))}\).
Conditional on \(\mathcal F_{\rm match}\), edges between unmatched
active communities remain independent. Set
\begin{equation}
  r_L=\lfloor\beta L-L^{3/4}\rfloor,\qquad
  d_L=(K-2)r_L=(\sigma^2+o(1))L^2.
  \label{eq:forest-degree}
\end{equation}
Both integers are positive for large \(m\).

\begin{lemma}[Balanced forests from many roots]
\label{lem:neutral-forest}
Fix \(R\geq1\) and \(\zeta>0\). Let \(1\leq M\leq m^{1-\zeta}\)
be an integer, and choose \(M\) distinct roots in each active block
by an \(\mathcal F_{\rm match}\)-measurable rule.
With high probability, there are \(N\geq M/2\) groups of roots with
pairwise disjoint depth-\(R\) forests such that every parent has
\(r_L\) children in each active community other than its own and its
matched community.
In their union, each active block contains exactly
\(Nd_L^\ell\) vertices at depth \(\ell\).
Every nonroot vertex satisfies
\[
  D_{i,\pi(c(i))}\leq(a+L^{-1/8})L.
\]
\end{lemma}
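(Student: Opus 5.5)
We plan to prove Lemma~\ref{lem:neutral-forest} by a sequential exploration that runs conditionally on \(\mathcal F_{\rm match}\), exposing only edges between unmatched active communities. Before exploring, reserve all \(KM\) roots and list the groups \(g=1,\ldots,M\), where group \(g\) consists of the \(g\)-th root in each active block. Process the groups in order. In group \(g\), grow a depth-\(R\) forest breadth-first from its \(K\) roots. For a parent \(v\) in block \(r\) and each \(s\in\mathcal N(r)\), query the edges from \(v\) to the \emph{eligible} vertices of \(C_s\). A vertex is eligible if it is not reserved, was not used by an earlier group (successful or failed), was not already placed in the current forest, and satisfies the \(\mathcal F_{\rm match}\)-measurable condition \(D_{i,\pi(c(i))}\le(a+L^{-1/8})L\). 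Among the neighbors found, keep the first \(r_L\) in a fixed order. The exploration of a group \emph{fails} if some parent has fewer than \(r_L\) eligible neighbors in some \(s\in\mathcal N(r)\), or if a queried edge joins the current forest to itself in a way that creates a cycle or collision. To keep the forests exactly tree-shaped and disjoint, we only ever query edges from a parent to eligible vertices that have never been touched. If a group fails, all vertices it touched are marked used and the group is discarded.

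\emph{Exact counts.} On success, each parent in block \(r\) has exactly \(r_L\) children in every block of \(\mathcal N(r)\). Induction on depth then shows that every block receives exactly \(d_L^\ell\) depth-\(\ell\) vertices per group. The key point is that, summed over the \(K\) roots, the number of depth-\(\ell\) vertices landing in block \(s\) is the same for every \(s\). Here we use that the matching structure is symmetric: each block \(s\) lies in \(\mathcal N(r)\) for exactly \(K-2\) blocks \(r\), each contributing \(d_L^{\ell-1}\) parents, so \(s\) receives \(d_L^{\ell-1}(K-2)r_L=d_L^\ell\) children. Summing over retained groups gives \(Nd_L^\ell\). Disjointness across groups holds by the eligibility rule, and the margin bound on nonroots holds by construction.

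\emph{Failure probability.} The total number of vertices touched is at most \(M\cdot K\cdot(KL^2)^{R}=ML^{O(R)}\le m^{1-\zeta/2}=o(m)\) per block. This is deterministic because touched sets are bounded by \(r_L\) children per queried block. Next, the ineligible vertices coming from the margin condition are few: \(\Prob(D_i>(a+L^{-1/8})L)\to0\) by Chebyshev, since \(\mathrm{Var}\,D_i=O(L)\). A Chebyshev estimate as in Lemma~\ref{lem:margin-counts} shows that, with high probability (measurable in \(\mathcal F_{\rm match}\)), each block has \((1-o(1))m\) vertices satisfying the margin condition. On this event, each query of a fresh parent into \(C_s\) sees a \(\operatorname{Bin}((1-o(1))m,q)\) count of eligible neighbors, and these edges are independent of everything already revealed. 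Its mean is \(\beta L-o(L)\), but we need the \(o(L)\) to be at most \(L^{3/4}/2\). So we require the ineligible fraction in each block to be \(o(L^{-1/4})\). This follows since that fraction is at most \(m^{-\zeta/2}+\Prob(D_i>(a+L^{-1/8})L)+o(\cdot)\), and the Bernstein tail at deviation \(L^{7/8}\gg\sqrt L\) is superpolynomially small in \(L\). Bernstein then gives \(\Prob(\text{fewer than }r_L)\le\exp(-cL^{1/2})\). A union bound over the \(L^{O(R)}\) parent--block queries in one group gives conditional failure probability \(L^{O(R)}e^{-cL^{1/2}}=o(1)\), uniformly over the history. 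Collisions do not arise because the children are always chosen from untouched vertices.

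\emph{Conclusion.} Let \(F_g\) be the failure indicator of group \(g\). The uniform conditional bound gives \(\E\sum_gF_g\le o(1)M\), and Markov's inequality yields \(\sum_gF_g\le M/2\) with high probability, so \(N\ge M/2\). The main obstacle is the uniformity in the failure step. The eligible pool shrinks adaptively, and the threshold \(r_L=\beta L-L^{3/4}\) leaves only \(L^{3/4}\) room. We therefore have to verify that the combined loss from reserved and used vertices and from margin exclusions is \(o(L^{3/4})\) in expected neighbor count. Equally, we must check that exposing edges into untouched vertices never conditions on information that biases later queries; the argument relies on \(\mathcal F_{\rm match}\)-independence of the unmatched cross edges together with the disjointness of queried edge sets.
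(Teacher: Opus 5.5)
Your proposal is correct and follows essentially the same route as the paper: condition on $\mathcal F_{\rm match}$, reserve all roots, and explore the groups in order using pools of unused vertices with $D_{i,\pi(c(i))}\le(a+L^{-1/8})L$. Each request then fails with probability at most $e^{-c\sqrt L}$ by a Bernstein bound on a fresh $\operatorname{Bin}(|U|,q)$ count, and $\E[F\mid\mathcal F_{\rm match}]\le M\rho_m$ together with Markov's inequality finishes the argument. The differences are minor. The paper bounds the bad pool by Markov at level $e^{-c'L^{3/4}}$, giving $q|U|=\beta L-o(1)$ rather than your $\beta L-o(L^{3/4})$, and either suffices. Your extra ``cycle or collision'' failure mode is vacuous, because children are drawn only from unused vertices.
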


\begin{proof}
Set \(\varepsilon_L=L^{-1/8}\) and
\[
  B_r=\{i\in C_r:D_{i,\pi(r)}>(a+\varepsilon_L)L\},
  \qquad G_r=C_r\setminus B_r.
\]
A Bernstein bound gives
\(\Prob(i\in B_r)\leq e^{-cL^{3/4}}\).
Markov's inequality followed by a union over the \(K=O(L)\)
blocks gives the \(\mathcal F_{\rm match}\)-measurable event
\begin{equation}
  \max_r|B_r|/m\leq e^{-c'L^{3/4}}
  \label{eq:good-matched-pools}
\end{equation}
with high probability.

Condition on any realization of \(\mathcal F_{\rm match}\) satisfying
\eqref{eq:good-matched-pools}.
List the roots in each block in a deterministic order.
Group \(j\) consists of the \(j\)-th root in every active block.
Reserve all roots before starting the exploration, so that no root can
be selected as a child. Process the groups successively and the vertices
within each group in breadth-first order. For a parent \(i\in C_r\),
visit the communities in \(\mathcal N(r)\) in label order. Query all
edges to the currently unused pool \(U\subseteq G_s\), select the
first \(r_L\) neighbors, and mark them as used. If there are too few
neighbors, discard the entire group. All vertices already used by that
group remain unavailable to later explorations.

None of the edges queried in a given step has been exposed earlier.
The candidate vertices have not yet been processed as parents, and any
earlier queries from the current parent went to other communities.
Queries from previous parents concern different edges. Moreover, these
edges lie outside \(\mathcal F_{\rm match}\). They are therefore
conditionally independent Bernoulli variables with parameter \(q\).

Even counting all reserved roots and all discarded partial groups, the
total number of used vertices is at most
\(MK\sum_{\ell=0}^R d_L^\ell=M L^{O(R)}\).
Uniformly over all earlier exploration histories,
\[
  |U|\geq m(1-e^{-c'L^{3/4}})-M L^{O(R)},\qquad
  q|U|=\beta L-o(1).
\]
The second relation uses \(M\leq m^{1-\zeta}\).
It follows that each request fails with conditional probability at most
\(e^{-c''\sqrt L}\). A group has only \(L^{O(R)}\) requests, so
its conditional failure probability is bounded by
\(\rho_m=L^{O(R)}e^{-c''\sqrt L}=o(1)\).

If \(F\) is the number of failed groups, iterated conditioning gives
\(\E[F\mid\mathcal F_{\rm match}]\leq M\rho_m\).
Thus \(\Prob(F>M/2\mid\mathcal F_{\rm match})\leq2\rho_m=o(1)\).
No independence between groups is required.

Every selected child is unused, so the selected edges form disjoint
forests. The graph may contain additional edges between their vertices.
For one retained group, let \(v_\ell(r)\) count the depth-\(\ell\)
vertices in community \(r\). Then
\[
  v_0(r)=1,\qquad
  v_{\ell+1}(s)=r_L\sum_{r:s\in\mathcal N(r)}v_\ell(r).
\]
Each target community receives children from \(K-2\) communities.
Induction gives \(v_\ell(r)=d_L^\ell\). Summing over the \(N\)
retained groups proves the stated count. Every nonroot vertex belongs
to a good pool, which gives its margin bound.
\end{proof}

\subsection{The centered forest vector and its quadratic form}

Fix positive weights \(w_0,\ldots,w_R\). Define \(\widetilde y\)
by assigning weight \(d_L^{-\ell/2}w_\ell\) to every retained
depth-\(\ell\) vertex and zero to all other vertices.
Let
\[
  s=\sum_{\ell=0}^R d_L^{\ell/2}w_\ell=O(L^R),\qquad
  \mu=Ns/m.
\]
Subtract \(\mu\) from each coordinate in the active communities to
obtain \(y\), keeping the inactive coordinates at zero. In each
active community,
\begin{equation}
  \sum_i y_i=0,\qquad
  \sum_i y_i^2=N\sum_{\ell=0}^R w_\ell^2-\frac{N^2s^2}{m}
      =N\left(\sum_{\ell=0}^R w_\ell^2-o(1)\right).
  \label{eq:multi-root-energy}
\end{equation}
The number and weight of the coordinates at each depth are the same
in every active community, so the coordinate multisets agree.
Since \(N\leq M\leq m^{1-\zeta}\), the positive coordinates of
\(y\) are precisely the forest vertices for large \(m\).
Their minimum is at least \(c_RL^{-R}\), and
\[
  \max_i y_i^2=O(1),\qquad
  \frac{K\mu}{\min_{y_i>0}y_i}
  \leq\frac Nm L^{O(R)}=o(1).
\]
Together with the uniform degree bound, these estimates verify the
structural assumptions of Lemma~\ref{lem:finite-gram}.

We next estimate \(\mathcal Q(y)\). First, we bound the effect of
subtracting the block means on each of its two terms. Let
\(h=\widetilde y-y=\mu\one_{\rm active}\). The maximum degree
is \(O(L^2)\), so
\begin{align}
  |y^{\mathsf T}Ay-\widetilde y^{\mathsf T}A\widetilde y|
     &\leq2\widetilde y^{\mathsf T}Ah+h^{\mathsf T}Ah
       =O(KN^2L^{2R+2}/m)=o(KNL), \label{eq:forest-centering-error}\\
  \sum_i|D_i^\pi|\,|y_i^2-\widetilde y_i^2|
     &\leq O(L)\,3KN^2s^2/m=o(KNL).
       \label{eq:margin-centering-error}
\end{align}
For the first bound, each term is at most a constant times
\(K(Ns)^2\max_i\deg(i)/m\).

Suppose every root has matched margin at most \(xL\), for a fixed
\(x<a\). There are \(KNd_L^\ell\) selected edges between depths
\(\ell-1\) and \(\ell\).
Every additional edge contributes nonnegatively to the uncentered
quadratic form. The root and nonroot margin bounds therefore give
\begin{equation}
  \frac{\mathcal Q(y)}{KNL}
  \geq -xw_0^2+2\sigma\sum_{\ell=1}^Rw_{\ell-1}w_\ell
            -a\sum_{\ell=1}^Rw_\ell^2-o(1).
  \label{eq:path-variational}
\end{equation}
All error bounds are uniform over \(1\leq N\leq M\).

\subsection{Choosing the weights}

When \(a\geq2\sigma\), set
\begin{equation}
  \tau_0=0,\qquad \tau_{j+1}=\frac{\sigma^2}{a-\tau_j}.
  \label{eq:tau-recurrence}
\end{equation}
These quantities arise from successively eliminating coordinates in the
quadratic form below, and they increase to \(\tau\). At \(a=2\sigma\),
\(\tau_j=\sigma j/(j+1)\).
Let \(T_R\) be the \(R\times R\) tridiagonal matrix with diagonal
\(a\) and adjacent entries \(-\sigma\).
Its least eigenvalue is
\(a-2\sigma\cos(\pi/(R+1))>0\).
Fixing \(w_0=1\), the maximum of the leading expression in
\eqref{eq:path-variational} is
\[
  -x+\sigma^2(T_R^{-1})_{11}=\tau_R-x,
\]
with \((w_1,\ldots,w_R)^{\mathsf T}=\sigma T_R^{-1}e_1\).
Indeed, eliminating the last path coordinate successively gives
\(\sigma^2(T_R^{-1})_{11}=\tau_R\).
The inverse has strictly positive entries, by the nonnegative Neumann
series and connectivity of the path, so all weights are positive.
For a fixed \(x<\tau\), choose a fixed \(R\) with \(\tau_R>x\).
Equations~\eqref{eq:multi-root-energy}--\eqref{eq:path-variational}
then give \(\mathcal Q(y)\geq\delta L\|y\|_2^2\) for fixed \(\delta>0\).
Figure~\ref{fig:forest-response} shows how the forest depth changes
the correction.

\begin{figure}[htbp]
  \centering
  \includegraphics[width=.76\linewidth]{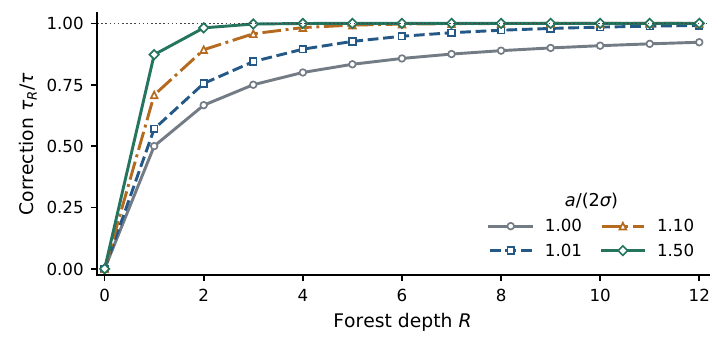}
  \caption{The correction produced by a finite-depth forest.
  Each curve evaluates \eqref{eq:tau-recurrence} at integer depths
  for the indicated value of \(a/(2\sigma)\), with \(\sigma=1\).
  The dotted horizontal line marks the limiting value one.
  At \(a=2\sigma\), the normalized correction is \(R/(R+1)\).
  For any fixed \(x<\tau\), the proof chooses a depth with
  \(\tau_R>x\) before taking the graph limit.}
  \label{fig:forest-response}
\end{figure}

\subsection{Nonoptimality in Theorem~\ref{thm:main}}

\begin{proof}[Proof of the failure assertions]
If \(a\geq2\sigma\) and \(I(\tau)<1\), choose fixed
\(x\in(0,\tau)\) with \(I(x)<1\).
Lemma~\ref{lem:margin-counts} gives a vertex with
\(D_{i,\pi(r)}\leq xL\) in every active block with high probability.
Choose the eligible vertex with the smallest index as the root; if none
exists, choose the vertex with the smallest index in the community.
This selection is \(\mathcal F_{\rm match}\)-measurable.
Apply Lemma~\ref{lem:neutral-forest} with \(M=1\).
Using the weights above, Lemma~\ref{lem:finite-gram} gives a feasible
matrix with strictly larger objective than \(Z^*\).

If \(a<2\sigma\), take \(M=1\) and select the vertex with the smallest
index in each good pool \(G_r\), using the vertex with the smallest
index in the community if the pool is empty. By
\eqref{eq:good-matched-pools}, every pool is nonempty with high
probability. All selected vertices then have matched margins
at most \((a+o(1))L\).
Choose a fixed \(R\) with
\[
  2\sigma\cos\frac{\pi}{R+2}>a,
\]
and \(w_\ell=\sin((\ell+1)\pi/(R+2))>0\).
The analogue of \eqref{eq:path-variational} has leading value
\[
  \left(2\sigma\cos\frac{\pi}{R+2}-a\right)
       \sum_{\ell=0}^Rw_\ell^2>0.
\]
Lemma~\ref{lem:finite-gram} again gives a feasible matrix with a
strictly larger objective than \(Z^*\).
Together with the success proof in Appendix~\ref{sec:success},
this completes Theorem~\ref{thm:main}.
\end{proof}

\subsection{Lower bounds on the objective gain and matrix error}

\begin{proof}[Completion of Theorem~\ref{thm:error}]
The upper bounds were established in Appendix~\ref{app:error-upper}.
Fix \(\varepsilon>0\), write \(r=I(\tau)\in(0,1)\), and choose
fixed \(x\in(0,\tau)\) close enough to \(\tau\) that
\[
  r<I(x)<\min\{1,r+\varepsilon/4\}.
\]
Choose fixed
\(0<\nu<\min\{1-I(x),\varepsilon/4\}\), and set
\(M=\lfloor m^{1-I(x)-\nu}\rfloor\).
The uniform margin-count bound gives at least \(M\) eligible roots in
every active community with high probability. Choose them by a fixed
\(\mathcal F_{\rm match}\)-measurable rule. If a community has fewer
eligible vertices, choose any \(M\) distinct vertices by a fixed rule.

Lemma~\ref{lem:neutral-forest} applies with
\(\zeta=I(x)+\nu>0\) and gives \(N\geq M/2\) successful groups.
Choose \(R\) and the positive path weights so that \(\tau_R>x\).
The energy calculation and Lemma~\ref{lem:finite-gram} give
\begin{equation}
  V_m-\langle A,Z^*\rangle
  \geq\langle A,Z(t)-Z^*\rangle
  \geq cKNL\geq c'kML.
  \label{eq:many-root-gain}
\end{equation}
Here \(K/k\to1\).
Since \(I(x)+\nu<r+\varepsilon/2\), dividing by \(nL=kmL\)
gives
\(\mathcal G_m\geq m^{-r-\varepsilon}\) for large \(m\).

The gain in \eqref{eq:many-root-gain} diverges.
Every feasible \(Z\) with objective at least \(V_m-1\) therefore has
\(\langle A,Z-Z^*\rangle\geq c'kML/2\) for large \(m\).
The bound \eqref{eq:gain-forces-error} holds simultaneously for
all feasible matrices, so it implies
\(\mathcal E_m(Z)\geq c''M/m\geq m^{-r-\varepsilon}\).
The two bounds hold simultaneously for all such matrices on an event
whose probability tends to one.
\end{proof}

%% file: appendices/bulk.tex
\section{Macroscopic optimal value and overlap}
\label{sec:bulk}

We prove Proposition~\ref{prop:bulk-value} by matching an upper bound
valid for every feasible matrix with a feasible construction. For the
upper bound, we separate the planted subspace from its orthogonal
complement. For the lower bound, we construct a polynomial Gram matrix
and adjust it to satisfy the unit-diagonal, row-sum, and nonnegativity
constraints. Local constructions for SDP values also appear in
\citet{MontanariSen16} and \citet{FanMontanari17}. Here we must additionally
enforce the row sums and entrywise nonnegativity.

\subsection{An objective bound uniform over the feasible set}

Let \(u_i=\max_s|d_{i,s}-\E d_{i,s}|\).
Bernstein's inequality, a union bound over \(k\) communities, and
integration of the resulting tail give
\[
  \E u_i\leq C\{\sqrt{L\log k}+\log k\}=o(L).
\]
The tail integration is the same as in \eqref{eq:max-degree-mean}.
Markov's inequality then gives
\(\sum_i u_i=o_{\Prob}(nL)\).
For every feasible \(Z\), entrywise nonnegativity and row sums \(m\)
imply
\begin{equation}
  |\langle QW,Z\rangle|,\quad|\langle WQ,Z\rangle|,\quad
  |\langle QWQ,Z\rangle|\leq\sum_i u_i.
  \label{eq:community-noise-cross}
\end{equation}
Indeed, \(|(QW)_{ij}|\leq u_j/m\). Since each column of \(Z\)
is nonnegative and sums to \(m\), summing over its entries gives
the first bound.
Symmetry gives the second. A further block average bounds
\(|(QWQ)_{ij}|\) by
\(m^{-2}\sum_{v\in C_{c(j)}}u_v\), yielding the third.

Since \(W=\Pi W\Pi+QW+WQ-QWQ\),
\(\Pi Z\Pi\succeq0\), and
\(\Tr(\Pi Z)=n(1-\theta_m(Z))\), the norm bound yields
\[
  \langle W,Z\rangle
  \leq(2\sigma+o_{\Prob}(1))nL(1-\theta_m(Z))
       +o_{\Prob}(nL)
\]
uniformly in \(Z\).
Also
\(\E A=(p-q)Z^*+qJ-pI\), where \(J=\one\one^{\mathsf T}\).
Consequently,
\begin{equation}
  \frac{\langle A,Z\rangle}{nL}
  \leq\beta+a\theta_m(Z)+2\sigma(1-\theta_m(Z))
       +o_{\Prob}(1),
  \label{eq:macro-upper}
\end{equation}
with an error term uniform over \(Z\). The zero diagonal contributes
\(-\alpha/m=o(1)\). Since \(0\leq\theta_m(Z)\leq1\),
this proves the required upper bound for \(V_m\).

\subsection{Fixed-degree polynomial moments}

Set \(Y=A/L\). For every fixed radius \(r\), the expected number
of vertices whose radius-\(r\) ball contains a cycle is
\(L^{O(r)}\). This follows from the connected unicyclic configurations
counted in the marked-resolvent proof, without requiring any vertex
to be marked.
With high probability there are at most \(n^{1/4}\) such vertices.
At all other vertices, the uniform total-degree bound gives the same
closed-walk counts as \eqref{eq:diagonal-walk-count}.
Thus, for each fixed \(j\),
\[
  (Y^j)_{ii}=\mu_j+o_{\Prob}(1)
\]
uniformly outside the exceptional set, where
\(\mu_{2r}=\frac1{r+1}\binom{2r}{r}\sigma^{2r}\) and
\(\mu_{2r+1}=0\). The norm bound
\(\|Y\|\leq\max_i\deg(i)/L=O_{\Prob}(L)\)
shows that these exceptional vertices contribute a vanishing amount
to the normalized traces.
Hence every fixed polynomial \(g\) satisfies
\begin{equation}
  \frac1n\Tr g(Y)\xrightarrow{\Prob}
     \int g(\lambda)\,\mu_{\rm sc,\sigma}(d\lambda),
  \label{eq:unmarked-moments}
\end{equation}
where \(\mu_{\rm sc,\sigma}\) is the centered semicircle law of
variance \(\sigma^2\), with density
\(\sqrt{4\sigma^2-\lambda^2}/(2\pi\sigma^2)\)
on \([-2\sigma,2\sigma]\).
The radii and polynomial degrees remain fixed as \(m\to\infty\).

\subsection{Exact constraints for the polynomial Gram matrix}

We use a power of a shifted adjacency matrix to give greater weight
to eigenvalues near the upper spectral edge. Fix an integer \(R\geq1\)
and set
\[
  F=(Y+2\sigma I)^R,\qquad
  D_{ii}=((F^2)_{ii})^{-1/2},\qquad C=DF^2D.
\]
The polynomial has nonnegative coefficients, so \(F\geq0\) entrywise,
and \(F_{ii}\geq(2\sigma)^R>0\).
Thus \(C\succeq0\), \(C\geq0\), and \(\diag C=\one\).
Its maximum row sum is
\(q_R=O_R(L^{2R})\) on the uniform degree event, since
\(D_{ii}\leq(2\sigma)^{-R}\).

To impose the row constraint, first project out the all-ones direction.
Let \(P_0=I-J/n\) and \(H=P_0CP_0\).
Then \(H\succeq0\), \(H\one=0\), and, writing
\(s_i=\sum_j C_{ij}\),
\[
  H_{ij}=C_{ij}-\frac{s_i+s_j}{n}
          +\frac{\sum_\ell s_\ell}{n^2}.
\]
Therefore
\(H_{ij}\geq-2q_R/n\) and \(H_{ii}=1+O(q_R/n)\) uniformly.
Adding \(J/k\) will give row sums \(m\). To recover unit diagonal
while preserving positive semidefiniteness, first shrink \(H\) by
a fixed factor \(1-\zeta\), where \(\zeta\in(0,1)\), and define
\begin{equation}
  \delta_i=1-\frac1k-(1-\zeta)H_{ii},\qquad
  \bar\delta=\frac1n\sum_i\delta_i,\qquad
  r_i=\frac{\delta_i-\bar\delta/(n-1)}{1-2/n}.
  \label{eq:bulk-diagonal-repair}
\end{equation}
For large \(m\), \(\delta_i=\zeta+o(1)\) uniformly, so
\(r_i>0\) and \(r_i=O(\zeta)\).
Summing \eqref{eq:bulk-diagonal-repair} gives
\(\sum_i r_i=n^2\bar\delta/(n-1)\), and hence
\[
  \bigl(P_0\diag(r)P_0\bigr)_{ii}
  =(1-2/n)r_i+\frac{\sum_jr_j}{n^2}=\delta_i.
\]
It follows that
\begin{equation}
  Z_{R,\zeta}=\frac Jk+(1-\zeta)H+P_0\diag(r)P_0
  \label{eq:bulk-feasible-gram}
\end{equation}
is positive semidefinite, has unit diagonal, and has row sums \(n/k=m\).
Its off-diagonal entries are at least
\(1/k-O(q_R/n)-O(\zeta/n)>0\), because
\(kq_R/n\to0\). Thus \(Z_{R,\zeta}\in\mathcal F\).

\subsection{The objective value and overlap}

Let
\[
  c_R=\int(\lambda+2\sigma)^{2R}\,
                 \mu_{\rm sc,\sigma}(d\lambda)>0.
\]
Outside the exceptional set defined above,
\((F^2)_{ii}=c_R+o_{\Prob}(1)\) uniformly.
We first show that the diagonal normalization has only a small effect:
\begin{equation}
  \|C-c_R^{-1}F^2\|_*=o_{\Prob}(n),
  \label{eq:polynomial-normalization}
\end{equation}
where \(\|\cdot\|_*\) denotes the nuclear norm.
For the Gram factors, we have
\[
  \frac1n\|(D-c_R^{-1/2}I)F\|_F^2=o_{\Prob}(1).
\]
Outside the exceptional set, \(D_{ii}\to c_R^{-1/2}\) uniformly.
The contribution from these rows tends to zero because this convergence
is uniform and \(n^{-1}\Tr F^2\to c_R\) by
\eqref{eq:unmarked-moments}. On the exceptional rows, use
\((D_{ii}-c_R^{-1/2})^2(F^2)_{ii}
\leq2+2c_R^{-1}(F^2)_{ii}\)
and the polynomial bound \(\|F\|=O_R(L^R)\).
There are at most \(n^{1/4}\) such rows, so their contribution after
division by \(n\) also tends to zero. Factoring
\(C=(DF)(DF)^{\mathsf T}\), we can now apply
\[
  \|UU^{\mathsf T}-VV^{\mathsf T}\|_*
  \leq\|U-V\|_F(\|U\|_F+\|V\|_F),
\]
with \(\|DF\|_F^2=n\) and \(\|F\|_F^2/n\to c_R\).
This proves \eqref{eq:polynomial-normalization}.

The difference \(H-C\) has rank at most two and norm \(O(q_R)\),
so its nuclear norm is \(o(n)\).
Also \(\E A/L=UMU^{\mathsf T}-\alpha I/m\), where \(U\)
contains normalized block indicators, \(M\) has diagonal \(\alpha\)
and off-diagonal \(\beta\), and \(\|M\|=O(L)\).
The contribution of the rank-\(k\) term to the normalized inner product
with \(F^2\) is at most
\(k\|M\|\|F\|^2/n=o(1)\); the diagonal term is negligible as well.
Equations~\eqref{eq:unmarked-moments}--\eqref{eq:polynomial-normalization}
and \(\|W/L\|=O_{\Prob}(1)\) therefore imply
\begin{equation}
  \frac{\langle W,H\rangle}{nL}
  \xrightarrow{\Prob}
  \lambda_R=
  \frac{\int\lambda(\lambda+2\sigma)^{2R}\,
                      \mu_{\rm sc,\sigma}(d\lambda)}
       {\int(\lambda+2\sigma)^{2R}\,
                      \mu_{\rm sc,\sigma}(d\lambda)}
  =\frac{4\sigma R}{2R+3}.
  \label{eq:bulk-polynomial-value}
\end{equation}
For the last equality, substitute \(\lambda=4\sigma u-2\sigma\).
The weighted density is proportional to
\(u^{2R+1/2}(1-u)^{1/2}\) on \([0,1]\), whose mean is
\((2R+3/2)/(2R+3)\).
In particular, \(\lambda_R\uparrow2\sigma\).

Figure~\ref{fig:bulk-weights} illustrates the effect of the polynomial
weight. As \(R\) increases, the normalized spectral density places
more mass near the upper edge. Its mean is \(\lambda_R\), which is
the limiting noise value in \eqref{eq:bulk-polynomial-value}.

\begin{figure}[htbp]
  \centering
  \includegraphics[width=\linewidth]{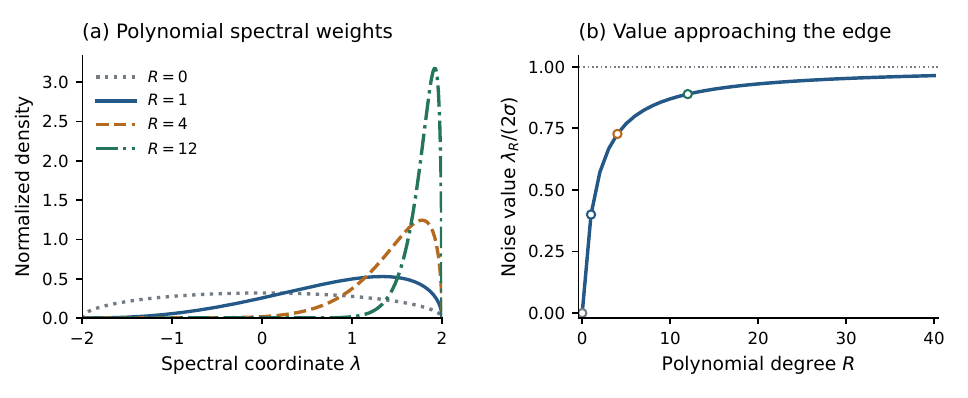}
  \caption{Polynomial weighting toward the upper spectral edge.
  (a) Normalized densities proportional to
  \((\lambda+2\sigma)^{2R}\sqrt{4\sigma^2-\lambda^2}\)
  on \([-2\sigma,2\sigma]\), with \(\sigma=1\).
  The unweighted semicircle density is included as \(R=0\).
  (b) Their normalized means,
  \(\lambda_R/(2\sigma)=2R/(2R+3)\), with circles at the degrees
  shown in panel~(a). The dotted line marks the spectral edge.
  For each fixed \(R\geq1\), this is the limiting noise value of
  the centered Gram matrix \(H\). The diagonal adjustment is
  accounted for separately in the proof.}
  \label{fig:bulk-weights}
\end{figure}

The positive semidefinite diagonal correction has trace \(O(\zeta n)\).
Together with the norm bound on \(W\), this bounds its contribution
to the noise objective below by \(-C\zeta nL\).
The contribution of \(J/k\) is \(o_{\Prob}(nL)\), since its trace
is \(n/k\). Finally, \(\alpha>\beta\) and feasibility give
\(\langle\E A,Z_{R,\zeta}\rangle\geq\beta nL-o(nL)\).
Thus
\[
  \frac{V_m}{nL}
  \geq\beta+(1-\zeta)\lambda_R-C\zeta-o_{\Prob}(1).
\]
For any fixed accuracy, choose \(R\) sufficiently large and
\(\zeta\) sufficiently small, and then let \(m\to\infty\) with
both fixed.
The planted feasible value satisfies
\(\langle A,Z^*\rangle/(nL)\to\alpha=\beta+a\);
this follows also from the bound on \(\sum_i u_i\).
Together with \eqref{eq:macro-upper}, these lower bounds prove
\eqref{eq:bulk-value}, including \(a=2\sigma\).

To obtain the overlap limits, consider any matrix with objective at
least \(V_m-s_m\), where \(s_m=o(nL)\). If \(a>2\sigma\),
\eqref{eq:macro-upper} and the limiting optimal value bound its
normalized objective deficit below by
\((a-2\sigma)(1-\theta_m(Z))-o_{\Prob}(1)\).
The deficit tends to zero, so \(\theta_m(Z)\to1\) uniformly over
these matrices. If \(a<2\sigma\), the corresponding lower bound is
\((2\sigma-a)\theta_m(Z)-o_{\Prob}(1)\), which gives
\(\theta_m(Z)\to0\) with the same uniformity.
This proves Proposition~\ref{prop:bulk-value}.

%% file: appendices/refinement.tex
\section{Row clustering and same-graph refinement}
\label{sec:refinement}

We first show that row clustering produces an initial labeling with
\(o(m)\) errors. We then prove that refinement reduces the error count
on an event that holds for every sufficiently small vertex set. Because
this event also applies to sets selected from the graph, the same graph
can be used for initialization and every refinement step. The procedure
requires only \(A\) and the known community size \(m\).

\subsection{A deterministic row-clustering guarantee}

For an input matrix \(Z\), compute all \(\ell_1\) distances
between its rows. Initialize the remaining set to \([n]\).
For \(r=1,\ldots,k\), choose a remaining row whose radius-\(m/2\)
ball contains the largest number of remaining rows, retain its index,
and remove the remaining rows within distance \(m\) of it.
Break ties by smallest vertex index. After \(k\) stages, assign each
of the original rows to its nearest retained center, with ties broken
by center order. If the remaining set becomes empty before stage \(k\),
output arbitrary labels. Under the condition in the next lemma, all
\(k\) stages are completed. The distances and ball counts can be
computed in \(O(n^3)\) operations.

Clustering SDP rows is analyzed by
\citet[Section~2.3]{FeiChen19}, and ball-based rounding appears in
\citet[Algorithm~2]{GaoMaZhangZhou17}. The following deterministic
bound applies to the version just specified.

\begin{lemma}[Row rounding]
\label{lem:row-rounding}
If \(D=\|Z-Z^*\|_{1,\mathrm{entry}}\) satisfies
\(8D/m<m/2\), the procedure performs all \(k\) stages and,
after a permutation of labels, makes at most \(8D/m\) errors.
\end{lemma}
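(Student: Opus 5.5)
Write \(z_i\) for the \(i\)-th row of \(Z\) and \(z_i^*\) for that of \(Z^*\), and let \(D_i=\|z_i-z_i^*\|_1\), so \(\sum_iD_i=D\). Call a vertex \emph{good} if \(D_i\le m/8\) and \emph{bad} otherwise. Markov's inequality gives at most \(8D/m\) bad vertices, so each community contains more than \(m-m/2=m/2\) good vertices. The plan is to show that the greedy ball procedure picks exactly one center from each community. We then show that every good vertex is assigned to the center of its own community, so only bad vertices can be misclassified.

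The geometric facts come from the triangle inequality together with \(\|z_i^*-z_j^*\|_1=2m\) when \(c(i)\ne c(j)\). Two good vertices in the same community have distance at most \(m/4\). Two good vertices in different communities have distance at least \(2m-m/4\). More generally, any two rows whose \(\ell_1\)-balls of radius \(m/2\) share a good vertex are within distance \(m\). A row within distance \(m/2\) of good vertices of two different communities would force \(2m-m/4\le m\), which is impossible. Hence every radius-\(m/2\) ball meets the good vertices of at most one community.

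The main step is an induction over the stages. The hypothesis is that after \(r\) stages the retained centers are associated with \(r\) distinct communities, and that all good vertices of the remaining \(k-r\) communities are still present.

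\emph{Size of the best ball.} A remaining good vertex of an untouched community has a ball containing all good vertices of that community. So the maximal ball has more than \(m/2\) remaining rows.

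\emph{Which community it hits.} The bad vertices number fewer than \(m/2\), so the chosen ball must contain a good vertex. By the previous paragraph it meets the good vertices of exactly one community \(C_s\).

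\emph{The community is untouched.} Any good vertex of an already-used community lies within \(m/4+m/8\) of some earlier center, which is a contradiction. I need to check this carefully: the earlier center \(x\) had a good vertex \(g\) of \(C_s\) within \(m/2\). A good vertex \(g'\) of \(C_s\) satisfies \(\|x-g'\|_1\le m/2+m/4<m\), so every such \(g'\) was removed. Therefore the remaining rows of the new ball that are good belong to an untouched community \(C_s\). The new center \(x'\) is within \(m/2+m/8\) of a good vertex of \(C_s\), so it is far from the other communities' good vertices.

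\emph{Removal step.} Removing the rows within distance \(m\) of \(x'\) deletes all good vertices of \(C_s\), since they lie within \(m/2+m/8+m/4<m\). It deletes no good vertex of any other community, since their distance is at least \(2m-m/8-m/2-m/8>m\).

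This completes the induction, so all \(k\) stages finish and the centers biject with the communities.

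For the final assignment, take a good vertex \(i\in C_s\). Its distance to the center of \(C_s\) is at most \(m/8+m/2+m/8\), while its distance to any other center is at least \(2m-m/8-m/2-m/8\). So the nearest-center rule labels \(i\) correctly under the induced permutation. The errors are therefore confined to the at most \(8D/m\) bad vertices.

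The main obstacle is the bookkeeping in the induction step: showing that the chosen ball contains a good vertex and targets a fresh community. This is where the hypothesis \(8D/m<m/2\) is used. The constants above leave slack, and I would verify each of them once more.
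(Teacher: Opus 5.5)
Your proof is correct and follows essentially the same route as the paper's: the $m/8$ good-row threshold with at most $8D/m$ bad rows, an induction showing that each selected radius-$m/2$ ball contains a remaining good row of exactly one unprocessed community and that the radius-$m$ removal deletes all of that community's good rows and none of any other ($\le 3m/4$ versus $\ge 5m/4$), followed by the same $3m/4$ versus $5m/4$ nearest-center comparison. The only slip is cosmetic: the new center is within $m/2$ of a good vertex of $C_s$, hence within $5m/8$ of its planted row, rather than ``within $m/2+m/8$ of a good vertex''; this stronger fact implies every bound you subsequently use.
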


\begin{proof}
Let \(z_i\) denote row \(i\) of \(Z\), and let
\(p_r=\one_{C_r}\) be the common row of \(Z^*\) for vertices in
community \(r\). Call row \(i\in C_r\) good if
\(\|z_i-p_r\|_1\leq m/8\).
There are at most \(B=8D/m\) bad rows.
Good rows in one block have pairwise distance at most \(m/4\);
good rows in distinct blocks have distance at least \(7m/4\).

Suppose the earlier stages have each removed the good rows of one
block and no good rows from other blocks. Every unprocessed block
then has at least \(m-B>B\) remaining good rows. This is also true
before the first stage. The radius-\(m/2\) ball of any one of them
contains all good rows of that block.
The selected ball therefore contains a good row. It cannot contain good
rows from two communities, because its diameter is at most \(m\).
Its center is within \(m/2\) of a good row of one block.
The radius-\(m\) removal therefore includes all good rows of that
block and none from any other block: the distances are respectively
at most \(3m/4\) and at least \(5m/4\).
Induction proves there are \(k\) centers, one associated with each block.

Each center is within \(5m/8\) of the true row for its associated community.
Every good row is therefore within \(3m/4\) of its own center
and at least \(5m/4\) from the other centers.
The final nearest-center assignment labels all good rows correctly.
\end{proof}

By \eqref{eq:coarse-initialization}, when \(a>2\sigma\), every feasible
input with objective at least \(V_m-1\) gives
\[
  h_0\leq16n\,\mathcal E_m(Z)\leq16n\,m^{-c_{\mathrm{init}}}=o(m)
\]
initial errors, simultaneously for all such inputs, with high probability.
Here \(D=2nm\,\mathcal E_m(Z)\), and \(k=O(\log m)\) makes the
last bound \(o(m)\).

\subsection{Uniform sparsity of small vertex sets}

Let \(e(U)\) count undirected edges with both endpoints in \(U\).

\begin{lemma}[Uniform small-set sparsity]
\label{lem:small-set}
For each fixed \(c>0\), there is a fixed \(\eta>0\) such that,
with probability tending to one,
\[
  e(U)<cL|U|\quad\text{for every }1\leq|U|\leq\eta m.
\]
\end{lemma}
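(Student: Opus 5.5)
The plan is a first-moment union bound over all vertex sets, organized by size. Fix \(c>0\) and let \(u=|U|\). Every pair of vertices is joined with probability at most \(p=\alpha L/m\), independently, so \(e(U)\) is stochastically dominated by \(\operatorname{Bin}(\binom u2,p)\). We will bound
\[
  \Prob\{\exists U,\ |U|=u,\ e(U)\geq cLu\}
  \leq\binom nu\binom{\binom u2}{\lceil cLu\rceil}p^{\lceil cLu\rceil}
\]
and sum over \(1\leq u\leq\eta m\).

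Set \(e_0=\lceil cLu\rceil\). Using \(\binom NM\leq(eN/M)^M\), the second factor times \(p^{e_0}\) is at most
\[
  \Bigl(\frac{e u^2 p}{2e_0}\Bigr)^{e_0}
  \leq\Bigl(\frac{e\alpha u}{2cm}\Bigr)^{cLu}
\]
whenever this base is below one. For the first factor we use \(\binom nu\leq(en/u)^u\), and since \(n=km\) with \(k=O(L)\),
\[
  \log\binom nu\leq u\log\frac{ekm}{u}.
\]
The total exponent for size \(u\) is therefore
\[
  u\Bigl[\log\frac{ekm}{u}-cL\log\frac{2cm}{e\alpha u}\Bigr].
\]
We split into two ranges of \(u\).

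\emph{Large sets}, \(u\geq m^{1/2}\). Write \(x=u/m\leq\eta\). The bracket is
\[
  \log(ek/x)-cL\log\frac{2c}{e\alpha x}.
\]
Choose \(\eta\) so small that \(c\log(2c/(e\alpha\eta))\geq 2\). Then the bracket is at most
\[
  \log(ek)+\log(1/x)-2L-cL\log(\eta/x).
\]
Because \(\log(1/x)\leq L/2\) in this range, and the last term is nonpositive, the bracket is at most \(-L\). Each term is then at most \(e^{-uL}\), which is summable and negligible.

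\emph{Small sets}, \(u\leq m^{1/2}\). Here \(\log(2cm/(e\alpha u))\geq L/2-O(1)\), so the negative part of the bracket is of order \(cL^2/2\). The positive part, \(\log(ekm/u)\), is only \(O(L)\). The bracket is therefore at most \(-c'L^2\) for large \(m\), and these terms also sum to \(o(1)\).

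The union over \(u\) finishes the proof. The counting itself is routine. The main care point is choosing \(\eta\) as a function of \(c\) and \(\alpha\) only, so that the factor \(\binom nu\) is beaten uniformly up to \(u=\eta m\). The dangerous regime is \(u\asymp\eta m\): there the entropy term \(u\log(ek/\eta)\) carries the extra \(\log k=\log\log m\), and it must be dominated by \(cLu\log(1/\eta)\). This works because the edge exponent scales with \(L\) while the entropy is only \(O(\log L)\) per vertex in this regime, so any fixed small \(\eta\) suffices for large \(m\).
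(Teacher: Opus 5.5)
Your proof is correct and follows essentially the same route as the paper: a first-moment union bound over sets of each size \(u\), using \(\binom nu\le(en/u)^u\) and \(\binom{\binom u2}{t}p^{t}\le\bigl(e\alpha u/(2cm)\bigr)^{cLu}\) once the base is below one. The only difference is bookkeeping. You split the range of \(u\) at \(m^{1/2}\), whereas the paper treats all \(u\le\eta m\) at once by writing the exponent in terms of \(v=\log(m/u)\ge2\) and bounding it by \(-\tfrac c4 Luv\).
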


\begin{proof}
Write \(u=|U|\) and \(t_u=\lceil cLu\rceil\).
If \(t_u>\binom u2\), no set of size \(u\) violates the claimed
bound. Otherwise, a union bound over the vertex sets and the required
edges gives
\[
  \Prob\{\exists U:|U|=u,\ e(U)\geq cLu\}
  \leq\binom nu\binom{\binom u2}{t_u}p^{t_u}
  \leq
  \left(\frac{en}{u}\right)^u
  \left(\frac{e\alpha u}{2cm}\right)^{cLu},
\]
provided the base of the second factor is less than one.
Let \(C=e\alpha/(2c)\), and choose \(\eta\) so small that
\(v=\log(m/u)\geq2\max\{\log C,0\}+2\) for \(u\leq\eta m\).
The logarithm of the last expression is
\[
  u\{1+\log k+v-cL(v-\log C)\}
  \leq-\frac{c}{4}Luv
\]
for all large \(m\), uniformly in \(u\), because \(\log k=O(\log L)\)
and \(v-\log C\geq v/2\).
Since \(v\geq2\), summing this bound over the possible values of
\(u\) gives \(o(1)\).
\end{proof}

\subsection{Synchronous refinement}

Given labels \(\widehat c^{(t)}\), assign each vertex at the next step
to a label that maximizes
\[
  s\longmapsto\sum_{j:\widehat c^{(t)}(j)=s}A_{ij},
\]
using the same current labels for all vertices and breaking ties by the
smallest label. Perform
\(T=\lceil\log_2n\rceil+2\) updates.
Fix the permutation aligning the initialization to the true labels;
the analysis uses this same alignment at every iteration.

For fixed \(d\in(0,a)\), put
\[
  \mathcal B_d=\{i:\Delta_i<dL\},\qquad B_d=|\mathcal B_d|.
\]
If the current error set is \(E\), then each difference between two
neighbor counts differs from its value under the true labels by at most
\(2\deg_E(i)\). Thus any vertex outside \(\mathcal B_d\) that is
incorrect after the update must have
\(\deg_E(i)\geq dL/2\).

We now use the event in Lemma~\ref{lem:small-set}, with \(c<d/12\).
If \(h=|E|\geq1\) and at least \(\lceil h/2\rceil\) vertices
outside \(\mathcal B_d\) are incorrect after the update, let \(F\)
consist of exactly that many of these vertices. For \(h\leq\eta m/2\),
the set \(U=E\cup F\)
satisfies \(|U|\leq\eta m\) and \(|U|\leq3|F|\). Therefore
\[
  \frac{dL}{2}|F|\leq\sum_{i\in F}\deg_E(i)
  \leq2e(U)<6cL|F|,
\]
a contradiction. If \(h=0\), all vertices outside
\(\mathcal B_d\) retain their correct labels.
In both cases,
\begin{equation}
  h_{t+1}\leq\lfloor h_t/2\rfloor+B_d.
  \label{eq:refinement-contraction}
\end{equation}
In particular, if \(h_0=o(m)\) and \(B_d=o(m)\), every iterate
stays below \(\max\{h_0,2B_d\}=o(m)\).
The small-set bound holds simultaneously for every possible error set,
so it applies at each iteration even though the labels depend on the
graph. Iterating the recurrence gives \(h_T\leq h_0/2^T+2B_d\).
Since \(h_0\leq n\) and \(h_T\) is an integer, \(h_T\leq2B_d\).

To illustrate this bound, define \(H_0=h_0\) and
\(H_{t+1}=\lfloor H_t/2\rfloor+B_d\).
On the same small-set event, induction gives \(h_t\leq H_t\).
Figure~\ref{fig:refinement-bound} plots this deterministic comparison
for a few fixed values of \(B_d\). When \(B_d=0\), the bound
reaches zero; when \(B_d>0\), the guarantee allows up to \(2B_d\)
errors after the stated number of rounds.

\begin{figure}[htbp]
  \centering
  \includegraphics[width=.78\linewidth]{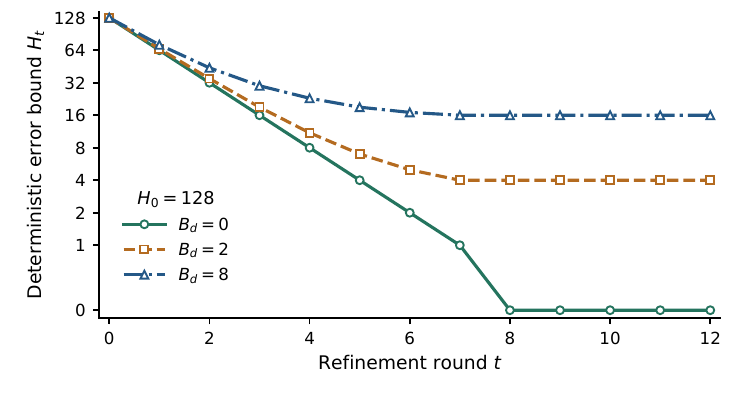}
  \caption{The deterministic refinement bound from
  \eqref{eq:refinement-contraction}, initialized at \(H_0=128\).
  Each curve iterates \(H_{t+1}=\lfloor H_t/2\rfloor+B_d\) for
  the indicated number of low-margin vertices. The \(B_d=0\) curve
  reaches zero after eight rounds; the other two curves settle at
  \(2B_d\). The vertical axis is logarithmic above one and linear
  near zero. These sequences illustrate the proof's upper bound on
  errors under its small-set hypotheses.}
  \label{fig:refinement-bound}
\end{figure}

\begin{proof}[Proof of Corollary~\ref{cor:postprocess}]
When \(I(0)>1\), choose fixed \(d>0\) small enough that \(I(d)>1\).
A union bound over \(n(k-1)\) comparisons gives
\(\Prob(B_d>0)\leq m^{1-I(d)+o(1)}=o(1)\).
The coarse matrix-error bound and Lemma~\ref{lem:row-rounding}
give \(h_0=o(m)\) simultaneously for every feasible input with
objective at least \(V_m-1\). When these bounds and the small-set
sparsity bound hold, \eqref{eq:refinement-contraction} reduces the
error count to zero. All true margins are positive, so subsequent
updates preserve the correct labels. These events hold jointly with
probability tending to one.
\end{proof}

\begin{proposition}[A high-probability label-error bound]
\label{prop:label-rate}
Assume \(a>2\sigma\).
For every fixed \(\varepsilon>0\), the same procedure satisfies
\[
  \min_{\pi\in S_k}\frac1n
  \#\{i:\widehat c(i)\ne\pi(c(i))\}
  \leq m^{-I_{\alpha,\beta}(0)+\varepsilon}
\]
with high probability, simultaneously for every feasible input with
objective at least \(V_m-1\).
\end{proposition}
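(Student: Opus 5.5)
The plan is to rerun the refinement analysis of Corollary~\ref{cor:postprocess}, this time keeping the low-margin set \(\mathcal B_d\) nonempty. For a fixed small \(d\in(0,a)\), the contraction \eqref{eq:refinement-contraction} together with \(h_0=o(m)\) gives \(h_T\leq 2B_d\) after \(T=\lceil\log_2n\rceil+2\) updates. Both inputs hold on one high-probability event, simultaneously for every feasible input with objective at least \(V_m-1\): the bound \(h_0=o(m)\) comes from the coarse initialization bound \eqref{eq:coarse-initialization} via Lemma~\ref{lem:row-rounding}, and the small-set event comes from Lemma~\ref{lem:small-set} with \(c<d/12\). Under the alignment permutation fixed from the initialization, the label error is therefore at most \(2B_d/n\), uniformly over such inputs. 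It then remains to bound \(B_d\).

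For \(B_d\), Lemma~\ref{lem:margin-ldp} with a union over the \(k-1\) competing communities gives
\[
\E B_d\leq n(k-1)m^{-I(d)+o(1)}=n\,m^{-I(d)+o(1)}.
\]
By Markov's inequality, \(B_d\leq n\,m^{-I(d)+\varepsilon/2}\) with high probability. The rate function \(I\) is continuous and strictly decreasing, with value \(I(0)\) at \(0\). Given \(\varepsilon\), I would choose \(d>0\) small enough that \(I(d)>I(0)-\varepsilon/4\). Then \(2B_d/n\leq 2m^{-I(0)+3\varepsilon/4}\leq m^{-I(0)+\varepsilon}\) for large \(m\), which gives the claimed rate.

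One point needs checking. The contraction argument in the text assumes \(h_t\leq\eta m/2\) at every step, so that the sets \(U=E\cup F\) stay inside the range of Lemma~\ref{lem:small-set}. This holds because iterates remain below \(\max\{h_0,2B_d\}\), and both quantities are \(o(m)\): \(B_d\leq n m^{-I(d)+\varepsilon/2}\) is \(o(m)\) provided \(I(d)>\varepsilon/2\), and I would shrink \(\varepsilon\) if necessary so that this holds.

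I expect no real obstacle here, because the contraction step is deterministic on the small-set event. The subtle points are two. First, the error count must be measured against the single alignment permutation fixed at initialization; the minimum over permutations in the statement is then bounded by this choice. Second, every event used must be uniform over inputs. That is already guaranteed by \eqref{eq:coarse-initialization} and by the fact that \(B_d\) depends only on \(A\).

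When \(I(0)\leq\varepsilon\), the bound is trivial since the error fraction is at most one, so the argument is needed only when \(I(0)>\varepsilon\). Note that this proposition does not require \(I(0)>1\). When \(I(0)<1\), the set \(\mathcal B_d\) is nonempty with high probability, yet the bound still holds.
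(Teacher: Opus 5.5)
Your proposal is correct and follows the paper's proof essentially step for step. You choose $d>0$ with $I(d)>I(0)-\varepsilon/4$, bound $B_d$ by Markov's inequality from $\E B_d\le n\,m^{-I(d)+o(1)}$, and combine $h_0=o(m)$ and $B_d=o(m)$ with the contraction \eqref{eq:refinement-contraction} to get $h_T\le 2B_d$; your shrinking of $\varepsilon$ to make $B_d=o(m)$ corresponds to the paper's restriction to $0<\varepsilon<I(0)/2$.
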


\begin{proof}
It suffices to prove the bound for \(0<\varepsilon<I(0)/2\).
Choose fixed \(d>0\) such that \(I(d)>I(0)-\varepsilon/4\).
The margin tail gives
\(\E B_d\leq n m^{-I(d)+o(1)}\).
Markov's inequality implies
\[
  B_d\leq n m^{-I(0)+\varepsilon/2}
\]
with high probability.
This is \(o(m)\), since \(k=O(L)\) and \(I(0)-\varepsilon/2>0\).
The initial error count is also \(o(m)\), so the recurrence
gives \(h_T\leq2B_d\leq n m^{-I(0)+\varepsilon}\) for large \(m\).
\end{proof}

The proposition gives a high-probability upper bound on label error
with exponent \(I(0)\). It does not provide a matching lower bound
or an expected-risk guarantee. For the SDP matrix itself,
Theorem~\ref{thm:error} gives matching upper and lower bounds with
exponent \(I(\tau)\).

%% file: appendices/consequences.tex
\section{Information benchmark and threshold consequences}
\label{app:consequences}

For fixed edge-probability parameters, the tightness theorem gives a
critical value of the community ratio \(k/\log m\). We derive this
ratio and compare the resulting boundary with the information-theoretic
threshold. We then describe its limit as the background parameter grows.

\subsection{Extreme margins and the critical community ratio}

The rate function is continuous and strictly decreasing on \((-\infty,a]\), with
\begin{equation}
  I_{\alpha,\beta}(0)=(\sqrt\alpha-\sqrt\beta)^2,
  \qquad I_{\alpha,\beta}(a)=0.
  \label{eq:rate-endpoints}
\end{equation}
Its derivative is
\[
  I'_{\alpha,\beta}(x)
  =\log\frac{x+\sqrt{x^2+4\alpha\beta}}{2\alpha}<0
  \qquad(x<a).
\] 

When \(I_{\alpha,\beta}(0)>1\), the smallest margin divided by
\(L\) converges to a positive constant. Define \(x_*\in(0,a)\) by
\begin{equation}
  I_{\alpha,\beta}(x_*)=1.
  \label{eq:extreme-margin}
\end{equation}
The degree-margin tail and count bounds give
\begin{equation}
  L^{-1}\min_i\Delta_i\xrightarrow{\Prob}x_*.
  \label{eq:minimum-margin-limit}
\end{equation}
To prove this, fix \(0<\varepsilon<a-x_*\).
A union bound over \(n(k-1)=mL^{O(1)}\) comparisons and
Lemma~\ref{lem:margin-ldp} exclude margins below
\((x_*-\varepsilon)L\) with high probability.
Lemma~\ref{lem:margin-counts} also shows that, with high probability,
every ordered pair of distinct communities has vertices whose margin
is at most \((x_*+\varepsilon)L\). Together these bounds prove
\eqref{eq:minimum-margin-limit}.

Since \(I\) is decreasing, the local success condition is
\(x_*>\tau\). Combining it with the bulk condition gives the
following expression for the critical ratio.

\begin{corollary}[Critical community ratio]
\label{cor:capacity}
If \(I_{\alpha,\beta}(0)>1\), define
\begin{equation}
  \gamma_{\mathrm{SDP}}(\alpha,\beta)=
  \begin{cases}
    x_*(a-x_*)/\beta,&x_*<a/2,\\
    a^2/(4\beta),&x_*\geq a/2.
  \end{cases}
  \label{eq:critical-ratio}
\end{equation}
Set \(\gamma_{\mathrm{SDP}}=0\) when \(I_{\alpha,\beta}(0)\leq1\).
For fixed \(0<\gamma<\gamma_{\mathrm{SDP}}\), \(Z^*\) is the unique
SDP optimizer with high probability. For fixed
\(\gamma>\gamma_{\mathrm{SDP}}\), it is not optimal with high probability.
\end{corollary}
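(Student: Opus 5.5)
The plan is to reduce Corollary~\ref{cor:capacity} to Theorem~\ref{thm:main} by turning its two conditions into explicit inequalities in \(\gamma\). Since \(\sigma^2=\beta\gamma\), the bulk condition \(a>2\sigma\) reads \(\gamma<a^2/(4\beta)\), and \(a<2\sigma\) reads \(\gamma>a^2/(4\beta)\). For \(a\geq2\sigma\), the map \(\gamma\mapsto\tau(\gamma)=(a-\sqrt{a^2-4\beta\gamma})/2\) is continuous and strictly increasing from \(0\) to \(a/2\) on \([0,a^2/(4\beta)]\). Moreover, \(\tau\leq a/2\) always holds, and \(\tau\) solves \(\tau(a-\tau)=\beta\gamma\) on the branch \(\tau\leq a/2\).

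Assume first that \(I(0)>1\), so that \(x_*\in(0,a)\) is well defined. Because \(I\) is strictly decreasing on \((-\infty,a]\) by \eqref{eq:rate-endpoints} and the derivative formula, we have \(I(\tau)>1\iff\tau<x_*\) and \(I(\tau)<1\iff\tau>x_*\). The argument then splits into two cases.

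\emph{Case \(x_*<a/2\).} The function \(h(x)=x(a-x)\) is increasing on \([0,a/2]\), and \(\tau\) lies in this range. Hence \(\tau<x_*\) is equivalent to \(\beta\gamma<x_*(a-x_*)\), that is, \(\gamma<\gamma_{\rm SDP}\). Note also that \(\gamma_{\rm SDP}=x_*(a-x_*)/\beta<a^2/(4\beta)\).
\begin{itemize}
\item For \(\gamma<\gamma_{\rm SDP}\), we get \(a>2\sigma\) and \(I(\tau)>1\), so the success part of Theorem~\ref{thm:main} applies.
\item For \(\gamma_{\rm SDP}<\gamma\leq a^2/(4\beta)\), we have \(a\geq2\sigma\) and \(\tau>x_*\), hence \(I(\tau)<1\), which is the local failure condition \eqref{eq:local-failure}. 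The endpoint \(a=2\sigma\) is included here because \eqref{eq:local-failure} allows equality.
\item For \(\gamma>a^2/(4\beta)\), the spectral failure condition \eqref{eq:spectral-failure} applies.
\end{itemize}

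\emph{Case \(x_*\geq a/2\).} For every \(\gamma<a^2/(4\beta)\), we have \(\tau<a/2\leq x_*\), so \(I(\tau)>1\) and tightness holds. For \(\gamma>a^2/(4\beta)\), spectral failure applies.

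The critical values themselves are excluded by the strict inequalities in the corollary. This is why the critical cases of Theorem~\ref{thm:main} never need to be resolved.

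Finally, suppose \(I(0)\leq1\), so that \(\gamma_{\rm SDP}=0\), and fix \(\gamma>0\). If \(a<2\sigma\), spectral failure applies. Otherwise \(\tau\in(0,a/2]\), and strict monotonicity gives \(I(\tau)<I(0)\leq1\), so local failure applies.

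The main point needing care is the branch identification: we must confirm that \(\tau\) is the smaller root and never exceeds \(a/2\). This is what makes the threshold \(x_*(a-x_*)\) correct when \(x_*<a/2\), and what makes it saturate at \(a^2/4\) otherwise. Beyond this, the proof is routine bookkeeping using monotonicity and continuity of \(I\) and \(\tau\).
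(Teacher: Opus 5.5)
Your proposal is correct and follows the paper's own proof. Both arguments reduce the corollary to Theorem~\ref{thm:main} using the monotonicity of \(\gamma\mapsto\tau\) on \([0,a^2/(4\beta)]\) and the strict decrease of \(I\), split on whether \(x_*<a/2\), and handle the endpoint \(a=2\sigma\) (where \(\tau=\sigma>x_*\)) and the case \(I(0)\leq1\) through the local and spectral failure conditions. Your version spells out bookkeeping that the paper leaves implicit, such as the strict increase of \(x(a-x)\) on \([0,a/2]\).
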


In the first branch, the correction reaches the smallest margin before
the bulk spectral gap closes. In the second branch, the bulk gap closes
first. The branches meet at \(x_*=a/2\).

\begin{proof}[Proof of Corollary~\ref{cor:capacity}]
If \(I(0)\leq1\), then \(I(\tau)<1\) whenever
\(a\geq2\sigma\), since \(\tau>0\); the remaining case is the
bulk failure condition. Assume now that \(I(0)>1\).
For \(0<\gamma<a^2/(4\beta)\), the map
\(\gamma\mapsto\tau\) increases from zero to \(a/2\), and
\(I(\tau)>1\) is equivalent to \(\tau<x_*\).
If \(x_*<a/2\), equality occurs at
\(\beta\gamma=x_*(a-x_*)\).  If \(x_*\geq a/2\), the
bulk edge is reached first.  Theorem~\ref{thm:main} gives the stated
conclusions. When \(x_*<a/2\), the case \(a=2\sigma\) satisfies
\(I(\sigma)<1\), so the theorem also gives nonoptimality there.
\end{proof}

\subsection{The information threshold and the SDP gap}

The achievability part of the information-theoretic benchmark follows
from the growing-community minimax bound of \citet{ZhangZhou16}.
We give the parameter conversion below. For the converse, we show that
swapping two vertices between communities increases the likelihood.

\begin{proposition}[Information-theoretic benchmark]
\label{prop:information}
Exact recovery of the partition up to community relabeling is possible
with high probability if
\begin{equation}
  I_{\alpha,\beta}(0)=(\sqrt\alpha-\sqrt\beta)^2>1.
  \label{eq:information-threshold}
\end{equation}
If \(I_{\alpha,\beta}(0)<1\) and the partition is uniform over
balanced partitions, every estimator has exact-recovery probability
tending to zero.
\end{proposition}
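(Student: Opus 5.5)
For achievability I would reduce to \citet{ZhangZhou16} rather than build a new estimator. An alternative is already available in the paper: the procedure of Corollary~\ref{cor:postprocess} succeeds when \(a>2\sigma\), but that condition need not hold for every \(\gamma\). So I convert parameters instead. Their minimax rate for \(k\) balanced communities with within and between probabilities \(p,q\) has exponent \(\frac{nI}{k}\), where \(I=-2\log(\sqrt{pq}+\sqrt{(1-p)(1-q)})\). Here \(nI/k=m I=(1+o(1))L(\sqrt\alpha-\sqrt\beta)^2\). Their conditions require \(nI/(k\log k)\to\infty\) and \(p/q\) bounded; both hold because \(\log k=O(\log L)\) and \(\alpha/\beta\) is fixed. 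Their bound gives expected misclassification proportion at most \(\exp\{-(1+o(1))mI/2\}\cdot\)(lower-order factor). Equivalently, the expected number of misclassified vertices is \(n\,m^{-(\sqrt\alpha-\sqrt\beta)^2+o(1)}\), with their normalization in which the misclassification count uses the factor for two-community confusions. When \(I(0)>1\) this is \(o(1)\), so Markov's inequality gives exact recovery with high probability. I would also check that their loss is for the exact \(m\)-balanced model, and if needed note that their estimator applies with known \(k\).

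For the converse I would use the standard swap argument under the uniform balanced prior. The MAP estimator is optimal, and under a uniform prior it is the maximum-likelihood balanced partition. So it suffices to show that, with probability tending to one, some other balanced partition has likelihood at least that of the truth; at a tie, MAP succeeds with probability at most \(1/2\). I fix two communities \(C_1,C_2\). Swapping \(i\in C_1\) and \(j\in C_2\) changes the log-likelihood by \(\log\frac{p(1-q)}{q(1-p)}\) times
\[
(d_{i,2}-d_{i,1})+(d_{j,1}-d_{j,2})+2A_{ij}.
\]
This quantity is \(\ge0\) whenever \(D_{i,2}\le -1\) and \(D_{j,1}\le -1\).

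It therefore remains to find, in \(C_1\), a vertex with \(D_{i,2}<0\), and in \(C_2\), one with \(D_{j,1}<0\). The shift by \(A_{ij}\) only matters up to \(O(1)\), which the conditional statement of Lemma~\ref{lem:margin-ldp} absorbs. Lemma~\ref{lem:margin-counts} as stated requires \(x<a\) fixed with \(I(x)<1\). I would apply it with a fixed \(x<0\) close to \(0\), chosen so that \(I(x)<1\), which is possible by continuity since \(I(0)<1\). It gives \(N_{12}(x)\ge1\) and \(N_{21}(x)\ge1\) with high probability. Margins \(\le xL<-2\) make the swap strictly increase the likelihood, so MAP never outputs the truth on this event.

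The main obstacle is the achievability conversion. The exact form of the Zhang--Zhou conditions and constant must match, including the factor in the exponent and whether their result is stated in expectation. The converse is routine given Lemmas~\ref{lem:margin-ldp} and~\ref{lem:margin-counts}.
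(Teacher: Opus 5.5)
Your proposal follows the paper's proof essentially step for step: achievability by converting parameters into Zhang--Zhou's Theorem~1.1 and applying Markov's inequality, using that any wrong partition has loss at least $1/n$; the converse by the Abbe--Bandeira--Hall swap between two fixed communities, with MAP $=$ ML under the uniform balanced prior and Lemma~\ref{lem:margin-counts} applied at a fixed level $x<0$ with $I(x)<1$. There are two slips to fix. First, the exponent is $n\mathcal D_{1/2}/k=m\mathcal D_{1/2}$ with no factor $1/2$; with $m\mathcal D_{1/2}/2$ the Markov step would fail for $1<I(0)<2$, though the count $n\,m^{-I(0)+o(1)}$ you then state is the correct one. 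Second, the swap changes the within-community edge count by $-D_{i,2}-D_{j,1}-2A_{ij}$ rather than $+2A_{ij}$, which is harmless since $xL\to-\infty$.
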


\begin{proof}[Proof of Proposition~\ref{prop:information}]
For achievability, apply \citet[Theorem~1.1]{ZhangZhou16}.
Their community count \(K\) is our \(k\), their balance parameter
is one, and their edge-probability parameters \(a_{\mathrm{ZZ}},b_{\mathrm{ZZ}}\)
are \(np,nq\), respectively.
Write \(\mathcal D_{1/2}\) for the Bernoulli R\'enyi divergence used
there, to distinguish it from our margin rate function.  Then
\[
  \mathcal D_{1/2}
  =-2\log\!\left(\sqrt{pq}+\sqrt{(1-p)(1-q)}\right),
  \qquad
  m\mathcal D_{1/2}=I_{\alpha,\beta}(0)L+o(1).
\]
Their assumption \(n\mathcal D_{1/2}/(k\log k)\to\infty\)
holds because \(L/\log k\to\infty\).
Their upper bound gives an estimator whose expected misclassification
fraction, minimized over label permutations, is at most
\(m^{-I(0)+o(1)}\), uniformly over balanced partitions.
Any incorrectly recovered partition has loss at least \(1/n\). Thus Markov's
inequality bounds its probability by
\(n m^{-I(0)+o(1)}=o(1)\) when \(I(0)>1\).

For the converse, we use the local-swap argument of
\citet[Section~5]{AbbeBandeiraHall16} inside two fixed planted
communities.  Choose \(\varepsilon>0\) with \(I(-\varepsilon)<1\).
Lemma~\ref{lem:margin-counts} implies that, with high probability,
both sets
\[
  \{i\in C_1:D_{i,2}\leq-\varepsilon L\},\qquad
  \{j\in C_2:D_{j,1}\leq-\varepsilon L\}
\]
are nonempty.  Choose one vertex from each.
Swapping their communities increases the number of within-community
edges, counted once, by
\[
  -D_{i,2}-D_{j,1}-2A_{ij}\geq2\varepsilon L-2>0.
\]
The swap preserves the community sizes, so it does not change the number
of possible within-community pairs. The log-likelihood gain equals
this edge-count gain multiplied by
\[
  \log\frac{p(1-q)}{q(1-p)}>0.
\]
Edges to all other communities retain their likelihood factors.
Thus the planted partition is not a maximum likelihood partition
with high probability.  Under the uniform balanced prior, maximum
likelihood is also maximum a posteriori estimation and maximizes the
probability of exact recovery.  This proves the converse.
\end{proof}

For \(a>2\sigma\), the strict decrease of \(I\) gives
\begin{equation}
  I_{\alpha,\beta}(\tau)<I_{\alpha,\beta}(0).
  \label{eq:strict-gap}
\end{equation}
The next corollary shows that every fixed positive community ratio
admits a gap between information-theoretic recovery and SDP tightness.

\begin{corollary}[A recovery gap at every positive ratio]
\label{cor:gap}
For every fixed \(\beta,\gamma>0\), there is
\(\varepsilon=\varepsilon(\beta,\gamma)>0\) such that every fixed
\(\alpha\) satisfying
\[
  1<\sqrt\alpha-\sqrt\beta<1+\varepsilon
\]
permits information-theoretic exact recovery with high probability,
while \(Z^*\) is not optimal for \eqref{eq:sdp} with high probability.
\end{corollary}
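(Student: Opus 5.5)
The plan is to combine Proposition~\ref{prop:information} with the failure assertions of Theorem~\ref{thm:main}, using continuity of \(I_{\alpha,\beta}(\tau)\) in \(\alpha\) near the information boundary. Fix \(\beta,\gamma>0\) and set \(\sigma=\sqrt{\beta\gamma}\). Let \(\alpha_0\) be defined by \(\sqrt{\alpha_0}-\sqrt\beta=1\), so that \(I_{\alpha_0,\beta}(0)=1\). Every \(\alpha>\alpha_0\) satisfies \(I_{\alpha,\beta}(0)>1\). Proposition~\ref{prop:information} then gives information-theoretic exact recovery with high probability. It remains to show that \(Z^*\) is not optimal for all \(\alpha\) in a right neighborhood of \(\alpha_0\).

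I split into two cases according to \(a_0=\alpha_0-\beta\) versus \(2\sigma\).

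\textbf{Case \(a_0<2\sigma\).} Since \(a=\alpha-\beta\) is continuous in \(\alpha\), we have \(a<2\sigma\) for all \(\alpha\) slightly above \(\alpha_0\). The spectral failure condition \eqref{eq:spectral-failure} of Theorem~\ref{thm:main} then gives nonoptimality.

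\textbf{Case \(a_0\geq2\sigma\).} Here \(\tau_0=\tau(\alpha_0)\) is defined and strictly positive, because \(\sigma>0\) and \(\tau(a-\tau)=\sigma^2\). Strict decrease of \(I_{\alpha_0,\beta}\) on \((-\infty,a_0]\) gives \(I_{\alpha_0,\beta}(\tau_0)<I_{\alpha_0,\beta}(0)=1\). Now consider \(\alpha\) slightly above \(\alpha_0\). Then \(a>a_0\geq2\sigma\), so \(\tau=(a-\sqrt{a^2-4\sigma^2})/2\) is defined. The map \(\alpha\mapsto\tau\) is continuous on \(\{a\geq2\sigma\}\), including the endpoint \(a=2\sigma\), where the square root vanishes continuously. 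The rate function \eqref{eq:rate-function} is jointly continuous in \((\alpha,x)\) for \(\alpha>0\), since \(x^2+4\alpha\beta>0\) and the logarithm's argument is positive. Hence \(\alpha\mapsto I_{\alpha,\beta}(\tau)\) is continuous at \(\alpha_0\) from the right. It therefore stays below \(1\) on some interval \((\alpha_0,\alpha_1)\). There the local failure condition \eqref{eq:local-failure} applies, and Theorem~\ref{thm:main} yields nonoptimality with high probability.

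In either case, choose \(\alpha_1>\alpha_0\) with the required property and define \(\varepsilon=\sqrt{\alpha_1}-\sqrt\beta-1>0\). Then \(1<\sqrt\alpha-\sqrt\beta<1+\varepsilon\) is exactly \(\alpha\in(\alpha_0,\alpha_1)\).

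The only delicate point is the boundary subcase \(a_0=2\sigma\). There the local correction sits at the edge of its domain, and the strict inequality \(I(\tau)<1\) must be obtained from \(\tau_0=\sigma>0\) rather than from a spectral gap. Continuity of \(\tau\) at \(a=2\sigma\) handles this. The excluded critical cases of Theorem~\ref{thm:main} never arise, because we always land strictly inside a failure region.
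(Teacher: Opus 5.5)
Your proof is correct, but it takes a different route from the paper's.

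\emph{The paper's route.} It sets \(\alpha_\delta=(\sqrt\beta+\delta)^2\) and shows that the root \(x_*(\delta)\) of \(I_{\alpha_\delta,\beta}(x)=1\) tends to zero as \(\delta\downarrow1\), while \(a_\delta\to2\sqrt\beta+1\). This puts it in the first branch of Corollary~\ref{cor:capacity}, so \(\gamma_{\mathrm{SDP}}(\alpha_\delta,\beta)=x_*(\delta)(a_\delta-x_*(\delta))/\beta\to0\). Nonoptimality then follows from \(\gamma>\gamma_{\mathrm{SDP}}\).

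\emph{Your route.} You apply the two failure conditions of Theorem~\ref{thm:main} directly. The key inputs are the strict inequality \(I_{\alpha_0,\beta}(\tau_0)<I_{\alpha_0,\beta}(0)=1\) at the information boundary and right-continuity of \(\alpha\mapsto I_{\alpha,\beta}(\tau)\) there.

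\emph{Comparison.} Your argument is more self-contained. It needs neither the root \(x_*\) nor the critical-ratio corollary, and hence not the monotonicity of \(\tau\) in \(\gamma\) used to prove that corollary. The cost is a case split on \(a_0\) versus \(2\sigma\). You handle it correctly, including the edge case \(a_0=2\sigma\), where \(\tau_0=\sigma>0\) and continuity of \(\tau\) at \(a=2\sigma\) does the work.

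The paper avoids the case split because Corollary~\ref{cor:capacity} already merges bulk and local failure into the single condition \(\gamma>\gamma_{\mathrm{SDP}}\). It also yields a slightly stronger, \(\gamma\)-free statement: the critical ratio itself vanishes as \(\sqrt\alpha-\sqrt\beta\downarrow1\). That fact ties the corollary to the boundary geometry developed in Appendix~\ref{app:geometry}. The corollary as stated, however, only needs the qualitative existence of \(\varepsilon(\beta,\gamma)\), which your argument delivers.
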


\begin{proof}[Proof of Corollary~\ref{cor:gap}]
Fix \(\beta\), write \(\alpha_\delta=(\sqrt\beta+\delta)^2\),
and let \(\delta\downarrow1\).
For \(\delta>1\), \(I_{\alpha_\delta,\beta}(0)=\delta^2>1\),
and the root \(x_*(\delta)>0\) of
\(I_{\alpha_\delta,\beta}(x)=1\) tends to zero.
To see this, fix any \(u\in(0,2\sqrt\beta+1)\).
At \(\delta=1\), strict monotonicity gives
\(I_{\alpha_1,\beta}(u)<1\).
By continuity, this inequality still holds for \(\delta>1\) sufficiently
close to one. Since the rate is decreasing, \(0<x_*(\delta)<u\).

On the other hand,
\(a_\delta=\alpha_\delta-\beta\to2\sqrt\beta+1>0\).
Thus \(x_*(\delta)<a_\delta/2\) near one, and
Corollary~\ref{cor:capacity} gives
\[
  \gamma_{\mathrm{SDP}}(\alpha_\delta,\beta)
  =\frac{x_*(\delta)(a_\delta-x_*(\delta))}{\beta}
  \longrightarrow0.
\]
For any fixed \(\gamma>0\), this critical ratio is less than
\(\gamma\) throughout a sufficiently short interval
\(1<\delta<1+\varepsilon\).
Proposition~\ref{prop:information} gives information-theoretic recovery,
and Corollary~\ref{cor:capacity} gives nonoptimality of \(Z^*\).
\end{proof}

The critical-ratio formula is continuous where \(I_{\alpha,\beta}(0)>1\).
For fixed \(\gamma\), the strict gap conditions therefore continue
to hold in an open neighborhood of each such pair \((\alpha,\beta)\).

\subsection{The large-background limit}

In the coordinate \(\delta=\sqrt\alpha-\sqrt\beta\), the boundary
simplifies as \(\beta\to\infty\). Define
\begin{equation}
  f(\gamma)=\begin{cases}
    1+\gamma/4,&0<\gamma\leq4,\\
    \sqrt\gamma,&\gamma\geq4.
  \end{cases}
  \label{eq:limiting-sdp-boundary}
\end{equation} 

\begin{proposition}[Sequential-limit phase diagram]
\label{prop:sequential-limit}
Fix \(\gamma,\delta>0\) with \(\delta\ne f(\gamma)\).
There exists \(\beta_0(\gamma,\delta)\) such that for every fixed
\(\beta>\beta_0\), with \(\alpha=(\sqrt\beta+\delta)^2\),
as \(m\to\infty\) and \(k/L\to\gamma\), \(Z^*\) is the unique
SDP optimizer with high probability if \(\delta>f(\gamma)\), and
is not optimal with high probability if \(\delta<f(\gamma)\).
\end{proposition}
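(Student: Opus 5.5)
The plan is to reduce the statement to Theorem~\ref{thm:main} by computing, as \(\beta\to\infty\) with \(\gamma,\delta\) fixed and \(\alpha=(\sqrt\beta+\delta)^2\), the limits of the two quantities that decide tightness: the sign of \(a-2\sigma\) and the value of \(I_{\alpha,\beta}(\tau)\). For each fixed \(\beta\), the regime \(m\to\infty\) is then governed by Theorem~\ref{thm:main}. So it suffices to show two things for all sufficiently large \(\beta\). First, \(a>2\sigma\) and \(I(\tau)>1\) when \(\delta>f(\gamma)\). Second, either \(a<2\sigma\), or \(a\ge2\sigma\) with \(I(\tau)<1\), when \(\delta<f(\gamma)\). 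Both are strict inequalities, so the excluded critical cases never occur once \(\beta\) is large.

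\emph{Step 1: the bulk condition.} Here \(a=2\delta\sqrt\beta+\delta^2\) and \(\sigma=\sqrt{\beta\gamma}\). Hence \(a/(2\sqrt\beta)=\delta+\delta^2/(2\sqrt\beta)\to\delta\) and \(\sigma/\sqrt\beta=\sqrt\gamma\). If \(\delta<\sqrt\gamma\), then \(a<2\sigma\) for large \(\beta\), and the spectral failure case \eqref{eq:spectral-failure} applies. If \(\delta>\sqrt\gamma\), then \(a>2\sigma\) for large \(\beta\), and \(\tau/\sqrt\beta\to c:=\delta-\sqrt{\delta^2-\gamma}\in(0,\delta)\), directly from \eqref{eq:tau}.

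\emph{Step 2: the rate function in the large-background scaling.} I would show that for \(x=\xi\sqrt\beta\), with \(\xi\) in a compact subset of \([0,2\delta)\),
\[
  I_{\alpha,\beta}(\xi\sqrt\beta)\longrightarrow\Bigl(\delta-\frac{\xi}{2}\Bigr)^2
\]
uniformly in \(\xi\). This is the Gaussian approximation. The margin has mean \(a\approx2\delta\sqrt\beta\) and variance \(\alpha+\beta\approx2\beta\), so the deviation \(a-x\) is of order \(\sqrt\beta\), i.e.\ of the order of one standard deviation, and the quadratic approximation to the rate should be exact in the limit.

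Concretely, I would use the representation \(I(x)=\sup_{\theta\le0}\{\theta x-\alpha(e^\theta-1)-\beta(e^{-\theta}-1)\}\) from the proof of Lemma~\ref{lem:margin-ldp}. The optimizer satisfies \(\theta_x=O(\beta^{-1/2})\) in this scaling. A Taylor expansion to second order, with cubic remainder \(O(\beta|\theta|^3)=O(\beta^{-1/2})\), then gives \(I(x)=(a-x)^2/(2(\alpha+\beta))+O(\beta^{-1/2})\). The leading term tends to \((2\delta-\xi)^2/4\). Alternatively, one can expand the closed form \eqref{eq:rate-function} directly using \(\sqrt{x^2+4\alpha\beta}=2\sqrt{\alpha\beta}\,(1+x^2/(8\alpha\beta)+\dots)\). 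I expect this uniform expansion, with the constants tracked carefully, to be the only technical step. Combining it with Step 1 and using continuity in \(\xi\) gives
\[
  I(\tau)\to L_\infty:=\Bigl(\tfrac{\delta+\sqrt{\delta^2-\gamma}}{2}\Bigr)^2 .
\]

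\emph{Step 3: identifying the boundary.} Assume \(\delta>\sqrt\gamma\). Then \(L_\infty>1\) if and only if \(\sqrt{\delta^2-\gamma}>2-\delta\). This holds automatically when \(\delta\ge2\). When \(\delta<2\), squaring shows it is equivalent to \(\delta>1+\gamma/4\).

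Since \(1+\gamma/4\ge\sqrt\gamma\), with equality only at \(\gamma=4\), the conditions combine as follows:
\begin{itemize}
\item For \(\gamma\le4\), tightness holds iff \(\delta>1+\gamma/4\). Note \(1+\gamma/4\le2\) here, so the case \(\delta\ge2\) adds nothing new.
\item For \(\gamma>4\), every \(\delta>\sqrt\gamma\) exceeds \(2\), so the only condition is \(\delta>\sqrt\gamma\).
\end{itemize}
This is exactly \(\delta>f(\gamma)\).

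If \(\delta\ne f(\gamma)\), then either \(\delta<\sqrt\gamma\), which is handled in Step 1, or \(\delta>\sqrt\gamma\) with \(L_\infty\ne1\) strictly. In the latter case, \(I(\tau)\) lies on the same side of \(1\) as \(L_\infty\) for all \(\beta>\beta_0\). Theorem~\ref{thm:main} then yields the conclusion, taking \eqref{eq:success-region} or \eqref{eq:local-failure} as appropriate.
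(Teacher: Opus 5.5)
\textbf{Same route as the paper, but one case is missing.} Your route is the paper's:
\begin{itemize}
\item reduce to Theorem~\ref{thm:main};
\item use \(a/\sqrt\beta\to2\delta\) and \(\tau/\sqrt\beta\to\delta-\sqrt{\delta^2-\gamma}\);
\item use \(I_{(\sqrt\beta+\delta)^2,\beta}(\sqrt\beta\,u)\to(2\delta-u)^2/4\) uniformly on compacts;
\item solve \(\delta+\sqrt{\delta^2-\gamma}=2\).
\end{itemize}
For the rate limit, the paper expands the explicit optimizer and the cumulant. This is the same computation as your second-order expansion of the Legendre transform.

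Your closing dichotomy, ``either \(\delta<\sqrt\gamma\) or \(\delta>\sqrt\gamma\) with \(L_\infty\ne1\),'' is not exhaustive: it omits \(\delta=\sqrt\gamma\). For \(\gamma<4\) we have \(f(\gamma)-\sqrt\gamma=(1-\sqrt\gamma/2)^2>0\). So \(\delta=\sqrt\gamma\) satisfies \(\delta<f(\gamma)\), the hypothesis \(\delta\ne f(\gamma)\) allows it, and the proposition asserts non-optimality there. Nothing in your argument covers this point. Step~1 produces \(a<2\sigma\) only when \(\delta<\sqrt\gamma\). Your computations of \(\tau\) and \(L_\infty\), and Step~3, are carried out only for \(\delta>\sqrt\gamma\).

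\textbf{The fix.} Treat \(\delta\geq\sqrt\gamma\) as a single case, as the paper does.
\begin{itemize}
\item The identity \(a-2\sigma=\delta^2+2\sqrt\beta(\delta-\sqrt\gamma)\) shows \(a>2\sigma\) for every \(\beta>0\), so the bulk condition holds exactly.
\item The identity \(a^2-4\sigma^2=4\beta(\delta^2-\gamma)+4\delta^3\sqrt\beta+\delta^4\) gives \(\tau/\sqrt\beta\to\delta-\sqrt{\delta^2-\gamma}\). At \(\delta=\sqrt\gamma\) this limit equals \(\delta\), which still lies in \([0,2\delta)\).
\item Your Step~2 then gives \(I_{\alpha,\beta}(\tau)\to L_\infty=\gamma/4<1\). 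So \eqref{eq:local-failure} yields non-optimality for all sufficiently large \(\beta\).
\end{itemize}
For \(\gamma\geq4\), the point \(\delta=\sqrt\gamma\) coincides with \(f(\gamma)\) and is excluded. With this addition, your case analysis covers every \(\delta\ne f(\gamma)\).
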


The proposition first takes \(m\to\infty\) at fixed
\(\alpha,\beta,\gamma\), and then takes \(\beta\to\infty\).
It does not cover \(\beta=\beta_m\to\infty\).
Figure~\ref{fig:sequential-phase} compares this limiting curve with
the fixed-\(\beta=1\) boundary. The junction \((\gamma,\delta)=(4,2)\)
is the junction in the sequential limit; for finite \(\beta\), the
junction can be different.

\begin{figure}[t]
\centering
\includegraphics[width=\linewidth]{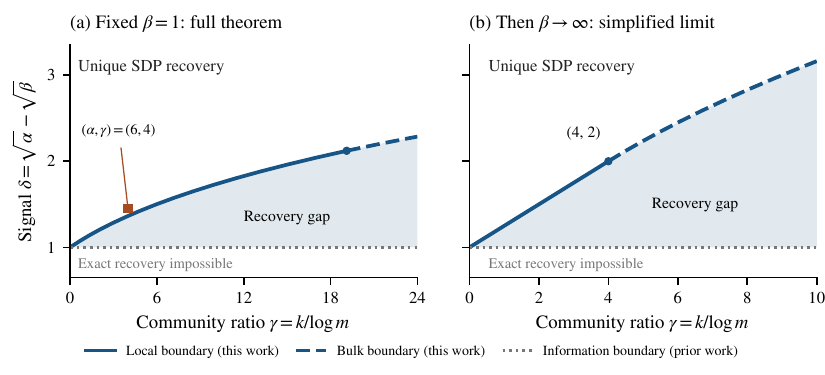}
\caption{The fixed-background and large-background boundaries.
The left panel uses \(\beta=1\); the right panel takes the graph
limit first and then \(\beta\to\infty\). The two curves have
different local--bulk junctions. Recovery claims concern the strict
regions on either side of the curves. The square marks
\((\alpha,\beta,\gamma)=(6,1,4)\), where the SDP is tight although
\(\sqrt6-1<f(4)=2\).}
\label{fig:sequential-phase}
\end{figure}

\begin{proof}[Proof of Proposition~\ref{prop:sequential-limit}]
For fixed \(\delta>0\) and uniformly over \(u\) in compact subsets
of \((-\infty,2\delta)\), the explicit optimizer in
Lemma~\ref{lem:margin-ldp} satisfies
\[
  \theta_{\sqrt\beta u}
  =\frac{u-2\delta}{2\sqrt\beta}+O(\beta^{-1}).
\]
For bounded \(v\), the cumulant expression has the expansion
\[
  \alpha(e^{v/\sqrt\beta}-1)
  +\beta(e^{-v/\sqrt\beta}-1)
  =2\delta v+v^2+o(1).
\]
Substituting the optimizer therefore gives
\begin{equation}
  I_{(\sqrt\beta+\delta)^2,\beta}(\sqrt\beta\,u)
  \longrightarrow\frac{(2\delta-u)^2}{4},
  \label{eq:large-beta-rate-profile}
\end{equation}
uniformly on those compact sets.

We have \(a/\sqrt\beta\to2\delta\).
If \(\delta<\sqrt\gamma\), then \(a<2\sigma\) for all
sufficiently large \(\beta\), and the bulk failure assertion applies.
For \(\delta\geq\sqrt\gamma\), \(a>2\sigma\) for every
\(\beta>0\), and
\begin{equation}
  \frac{\tau}{\sqrt\beta}
  \longrightarrow\delta-\sqrt{\delta^2-\gamma}.
  \label{eq:large-beta-tau}
\end{equation}
This limit lies strictly below \(2\delta\), so
\eqref{eq:large-beta-rate-profile} gives
\begin{equation}
  I_{\alpha,\beta}(\tau)
  \longrightarrow
  \frac{(\delta+\sqrt{\delta^2-\gamma})^2}{4}.
  \label{eq:large-beta-rate}
\end{equation}
The limiting rate is strictly increasing in
\(\delta\geq\sqrt\gamma\), starting at \(\gamma/4\).
It equals one when
\(\delta+\sqrt{\delta^2-\gamma}=2\).
This equation requires \(\delta\leq2\). Squaring and solving gives
\(\delta=1+\gamma/4\), which belongs to this branch precisely
when \(\gamma\leq4\).
For \(\gamma\geq4\), the bulk boundary is \(\delta=\sqrt\gamma\).
This proves the formula \eqref{eq:limiting-sdp-boundary}.
For every strict comparison \(\delta\ne f(\gamma)\), the same
success or failure condition holds for all sufficiently large fixed
\(\beta\). Theorem~\ref{thm:main} then gives the stated conclusion
as \(m\to\infty\).
\end{proof}

\section{Fixed-parameter geometry and figure evaluation}
\label{app:geometry}
\label{sec:geometry}

We derive the boundaries and expansions used in the numerical
illustrations. Write \(\delta=\sqrt\alpha-\sqrt\beta\) and let
\(\delta_{\mathrm{SDP}}(\beta,\gamma)\) denote the boundary in this
coordinate. The limits below describe this boundary after taking the
graph limit at fixed parameters. The model parameters do not vary with \(m\).

\subsection{Monotonicity and the local--bulk junction}

For \(x<a=\alpha-\beta\), write
\[
  \theta_x=\log\frac{x+\sqrt{x^2+4\alpha\beta}}{2\alpha}<0.
\]
Differentiating the optimized expression for the rate gives
\[
  \partial_\alpha I_{\alpha,\beta}(x)=1-e^{\theta_x}>0,
  \qquad \partial_x I_{\alpha,\beta}(x)=\theta_x<0.
\]
At fixed \(\beta,\gamma>0\), \(\tau\) decreases with \(a\) for
\(a>2\sqrt{\beta\gamma}\). Thus
\(I_{\alpha,\beta}(\tau)\) is strictly increasing with \(\alpha\)
throughout this region. It tends to infinity as \(\alpha\to\infty\):
\(\tau=O(\alpha^{-1})\), while
\(I_{\alpha,\beta}(0)=(\sqrt\alpha-\sqrt\beta)^2\), and the
change from zero to \(\tau\) is \(O(\alpha^{-1}\log\alpha)\).
Consequently, the tightness boundary is the bulk value
\(a=2\sqrt{\beta\gamma}\) when the rate there is at least one;
otherwise it is the unique larger \(a\) for which the local rate equals
one. Thus, for fixed \(\beta,\gamma\), there is a unique tightness
boundary as \(\delta\) increases.

For a fixed \(\beta\), consider
\(h_\beta(a)=I_{\beta+a,\beta}(a/2)\), \(a\geq0\).
If \(u=e^{\theta_{a/2}}\), the optimizing equation reads
\(a/2=(\beta+a)u-\beta/u\). For \(a>0\), this implies
\(1/2<u<1\). Differentiating gives
\[
  h_\beta'(a)=1-u+\tfrac12\log u>0.
\]
The last inequality follows because the right-hand side vanishes at
\(u=1\) and has negative derivative on \((1/2,1)\).
Also \(h_\beta(0)=0\) and
\(h_\beta(a)/a\to(1-\log2)/2>0\) as \(a\to\infty\).
There is therefore a unique solution of \(h_\beta(a)=1\).
The local and bulk branches meet at
\begin{equation}
 I_{\beta+a,\beta}(a/2)=1,\qquad \gamma=a^2/(4\beta).
 \label{eq:fixed-beta-junction}
\end{equation} 

\subsection{The small-ratio expansion}

For each fixed \(\beta>0\), as \(\gamma\downarrow0\), the local
boundary has the expansion
\begin{equation}
 \delta_{\mathrm{SDP}}(\beta,\gamma)
 =1+\frac{\beta\log(1+1/\sqrt\beta)}{2(2\sqrt\beta+1)}\gamma+O_\beta(\gamma^2).
 \label{eq:small-gamma-gap}
\end{equation} 

Fix \(\beta>0\), set \(\alpha=(\sqrt\beta+\delta)^2\), and
define near \((\delta,\gamma)=(1,0)\)
\[
  F(\delta,\gamma)=I_{\alpha,\beta}(\tau)-1.
\]
In the formula for \(\tau\), with \(\sigma^2=\beta\gamma\), the
quantity under the square root remains positive for \(\gamma\) near
zero. Thus \(\tau\) and \(F\) are smooth near
\((\delta,\gamma)=(1,0)\). At \(\gamma=0\),
\(\tau=0\) and \(F(\delta,0)=\delta^2-1\). Hence
\(F(1,0)=0\) and \(\partial_\delta F(1,0)=2\).
Moreover,
\[
  \left.\partial_\gamma\tau\right|_{\gamma=0}
  =\frac{\beta}{\alpha-\beta},\qquad
  \partial_x I_{\alpha,\beta}(0)
  =-\log\left(1+\frac{\delta}{\sqrt\beta}\right).
\]
The implicit function theorem gives a smooth local boundary
\(\delta_{\mathrm{SDP}}(\beta,\gamma)\) with
\[
  \left.\frac{\partial\delta_{\mathrm{SDP}}}{\partial\gamma}
  \right|_{\gamma=0}
  =-\frac{\partial_\gamma F(1,0)}{\partial_\delta F(1,0)}
  =\frac{\beta\log(1+1/\sqrt\beta)}{2(2\sqrt\beta+1)}.
\]
For small positive \(\gamma\), the bulk inequality is strict near
\(\delta=1\), and the monotonicity just proved identifies this
implicit curve with the SDP recovery boundary. Taylor's theorem proves
\eqref{eq:small-gamma-gap}. The coefficient is positive; its limit
\(1/4\) follows from \(\log(1+s)=s+O(s^2)\) as \(s\downarrow0\).

Figure~\ref{fig:background-dependence} compares the boundaries at
three fixed values of \(\beta\). Its right panel plots \(c_\beta\),
the coefficient of \(\gamma\) in \eqref{eq:small-gamma-gap}, and
shows how the initial slope of the gap approaches \(1/4\).

\begin{figure}[t]
  \centering
  \includegraphics[width=\linewidth]{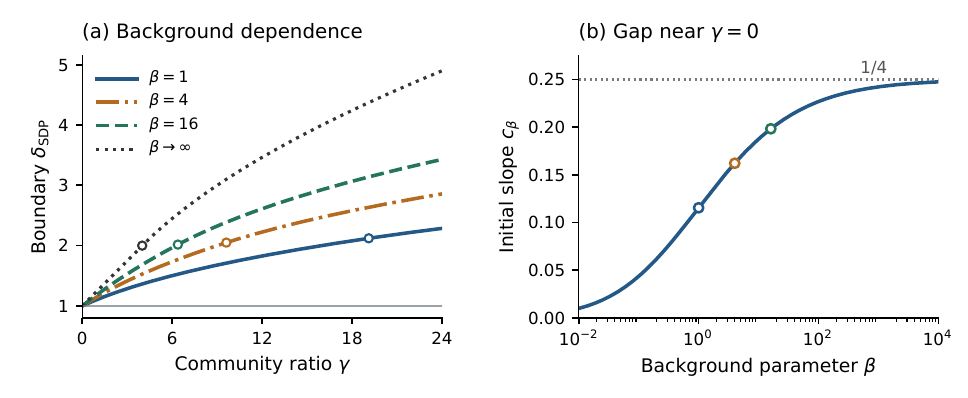}
  \caption{Dependence of the tightness boundary on the background parameter.
  (a) Boundaries for \(\beta=1,4,16\), together with the sequential
  limit \(f(\gamma)\) from \eqref{eq:limiting-sdp-boundary}.
  Circles mark the local--bulk junctions; the horizontal line is the
  information boundary \(\delta=1\).
  (b) The initial slope \(c_\beta\) from \eqref{eq:small-gamma-gap}.
  Colored circles identify the same three values of \(\beta\) as in
  panel~(a), and the dotted line marks the limit \(1/4\).
  Both panels evaluate asymptotic formulas after the graph limit at
  fixed parameters. Recovery at the critical boundaries is not asserted.}
  \label{fig:background-dependence}
\end{figure}

\subsection{Evaluation of the theoretical curves}

The numerical figures evaluate formulas stated in the paper. The source
\texttt{README.md} lists the plotting commands, and the accompanying
scripts and curve tables specify the parameters and grids. Numerical
checks compare boundary roots with their defining equations, the rate
function with its cumulant expression, forest corrections with
tridiagonal inverses, spectral densities with their exact mass and mean,
and refinement recurrences with their geometric upper bound.
The boundary at \(\gamma=0\) is plotted by continuity. These evaluations
illustrate the analytic results; they use no sampled graphs or numerical
SDP solutions.

%% file: appendices/comparison.tex
\section{Comparison of models and feasible sets}
\label{app:comparison}

To compare recovery guarantees, we must account for differences in
the parameter scaling and the feasible sets. For example, if a paper uses
\(p=a_{\rm old}\log n/n\), its parameter in our scaling is
\(a_{\rm old}=\alpha k\log m/\log n\), which grows with \(m\).
Applying a theorem stated for fixed \(a_{\rm old}\) therefore requires
checking that its bounds remain valid for these growing parameters.

\paragraph{The same balanced relaxation.}
Theorem~1.1 and SDP (4) of \citet{AgarwalBandeiraKoiliarisKolla15}
use the same \((m,k,\alpha,\beta)\) scaling and feasible set as
\eqref{eq:sdp}. Their theorem allows \(k=O(\log m)\): it is sharp
for \(k=o(\log n)\), whereas its sufficient condition at
\(k=(\gamma+o(1))\log n\) includes a universal constant.
Their Section~3 and Appendix~A.6 give the certificate displayed in
Appendix~\ref{sec:success}. Proposition~\ref{prop:marked-resolvent}
sharpens the sufficient condition obtained from this certificate.

Section~6 of \citet{HajekWuXu16} uses the signed matrix formulation
with \(Y_{ii}=1\), \(Y\succeq0\), \(Y\one=0\), and
\(Y_{ij}\geq-1/(k-1)\). The transformation
\[
  Y=\frac{kZ-J}{k-1},\qquad Z=\frac{(k-1)Y+J}{k}
\]
maps its feasible set bijectively to \eqref{eq:sdp}. On
\(\one^\perp\), it rescales positive semidefiniteness by a
positive factor; the row constraints determine the action on \(\one\).
The objective also changes by a positive factor and an additive constant.
Their Theorem~17 and Remark~18 establish a gap for this same SDP when
the logarithmic ratio is sufficiently large. They do not determine
the boundary for every fixed positive ratio.
Their Section~7 also discusses local and global perturbations.
The present analysis determines the boundary produced by these
mechanisms at each fixed positive ratio.

\paragraph{A larger trace-norm relaxation.}
In \citet[Algorithm~2, Theorems~6 and~8]{ChenXu16}, specialize the
cluster count and size to \(r=k\), \(K=m\), with no unclustered
vertices. Their convex program requires
\(\|Z\|_*\leq n\), \(\langle J,Z\rangle=nm\), and
\(0\leq Z_{ij}\leq1\), where \(\|\cdot\|_*\) is the nuclear
norm. Every feasible point of \eqref{eq:sdp} satisfies these constraints:
positive semidefiniteness and unit diagonal give
\(\|Z\|_*=\Tr Z=n\) and \(|Z_{ij}|\leq1\), and the row sums
give total mass \(nm\). Their feasible set is therefore larger.
If their relaxation uniquely recovers the planted matrix, then ours
does as well. Nonoptimality does not transfer in the same direction:
a better matrix in their feasible set need not belong to ours. Remark~7
and Section~2.3.1 of their paper explicitly distinguish these directions.
Their sufficient condition has the leading form
\(a^2\geq c(\alpha+\beta\gamma)\), with an unspecified universal
constant~\(c\).

\paragraph{Other mechanisms and recovery objectives.}
Theorems~4--5 of \citet{HajekWuXuExtensions16} and Section~1.2.3
of \citet{PerryWein16} keep the community count fixed; the latter
allows unequal fixed proportions. Theorem~1 of \citet{GaudioProngpaophan25}
shows nonoptimality of a binary symmetric SDP for some unequal
within-community edge probabilities; their Section~2 explicitly
distinguishes this from failure after rounding. Our model keeps the
within-community probabilities equal and varies the community count.

\citet{MontanariSen16} and \citet{FanMontanari17} study
SDP values and detection at bounded average degree. Our average degree
diverges as \((\log m)^2\), and our feasible set additionally imposes
the entrywise and row-sum constraints in \eqref{eq:sdp}.
Our Gram constructions explicitly enforce these additional constraints.

\paragraph{Error exponents and refinement.}
Section~2.2 of \citet{FeiChen19} explicitly permits replacing a total-mass
constraint by the row constraints in \eqref{eq:sdp}; their upper-entry
bound is already implied by positive semidefiniteness and unit diagonal.
Their results therefore apply to the same relaxation.
Theorems~1--2 give exponential matrix-error bounds and a rounding
guarantee using approximate \(k\)-medians. The constants in those
exponents are not optimized; our Theorem~\ref{thm:error} determines the
matching exponent \(I(\tau)\) in the present scaling.
\citet[Section~3.2 and Theorem~2]{FeiChenBayes19} give a sharp
Bayes exponent for the binary symmetric SBM. Their Section~1.1
also uses a planted dual certificate to bound errors without assuming
tightness. Our diagonal-adjustment argument builds on this approach.

\citet{GaoMaZhangZhou17} separate initialization and local refinement.
Their Condition~1 requires a suitable error bound and failure
probability for each leave-one-out initialization; Theorem~4 then
controls the refined error. Their concrete spectral initialization
in Theorems~6 and~13 does not directly cover our scaling:
with \(a_{\rm old}=np=\alpha kL\) and \(b_{\rm old}=nq=\beta kL\),
\[
  \frac{(a_{\rm old}-b_{\rm old})^2}{a_{\rm old}k^3}
  =\frac{(\alpha-\beta)^2L}{\alpha k^2}\longrightarrow0.
\]
Here a coarse SDP error bound gives a sufficiently accurate initial
labeling. We analyze refinement on an event that holds for all small
error sets. This allows the same graph to be used at every step despite the dependence
of the initial labels on its edges.

Nonintegral SDP solutions can also reveal clusters in other settings.
\citet{ZurekChen24} recover large clusters in the presence of
smaller and critically sized clusters and analyze noninteger
solution blocks. Their heterogeneous-size setting differs from
the fixed-balanced error law studied here.

\paragraph{Growing-community KS results.}
The v3 formulation of \citet{ChinMosselSohnWein26} assigns labels
independently and uniformly. Its normalized signal-to-noise ratio has the formal limit
\(a^2/(\beta\gamma)\) under our edge scaling.
However, its main algorithmic regime assumes
\(d,\lambda=\Theta(1)\), whereas here
\[
  d=(\alpha+(k-1)\beta)L\asymp L^2,\qquad
  \lambda=\frac{\alpha-\beta}{\alpha+(k-1)\beta}\asymp L^{-1}.
\]
Its Theorem~1.3 allows broader densities for low-degree hardness,
but remains an independent-label recovery result.
Transferring these results to fixed balanced partitions would require
control under conditioning on exact balance, as well as a change from
weak to exact recovery. This signal-to-noise comparison therefore does
not identify \(a=2\sigma\) as a computational threshold for our model.